\documentclass[12pt, a4paper, leqno]{amsart}
\usepackage[utf8]{inputenc}
\usepackage{amsmath}
\usepackage{amsfonts}
\usepackage{amssymb}
\usepackage{times}
\usepackage[T1]{fontenc} 
\usepackage{url} 
\usepackage{color,esint}
\usepackage[dvipsnames]{xcolor}
\usepackage[colorlinks, allcolors=RedViolet,pdfstartview=,pdfpagemode=UseNone]{hyperref} 
\usepackage{graphicx} 
\usepackage{enumitem}
\usepackage{tabularx}
 \usepackage{mathtools}
\usepackage{pgf,tikz}
\usepackage{mathrsfs}
\usetikzlibrary{arrows}

\setlength{\oddsidemargin}{-0.2cm}
\setlength{\evensidemargin}{-0.2cm}
\setlength{\textwidth}{16.2cm}
\setlength{\topmargin}{-0.3cm}
\setlength{\textheight}{23.6cm} 
\setlength{\marginparwidth}{2cm}
\let\oldmarginpar\marginpar
\renewcommand\marginpar[1]{\-\oldmarginpar[\raggedleft\footnotesize #1]%
{\raggedright\footnotesize #1}}

\usepackage{amsthm}
\theoremstyle{plain}
\newtheorem{thm}[equation]{Theorem}
\newtheorem{lem}[equation]{Lemma}
\newtheorem{prop}[equation]{Proposition}
\newtheorem{cor}[equation]{Corollary}

\theoremstyle{definition}
\newtheorem{defn}[equation]{Definition}

\newtheorem{eg}[equation]{Example}

\theoremstyle{remark}
\newtheorem{rem}[equation]{Remark}

\numberwithin{equation}{section}

\newcommand{\R}{\mathbb{R}}

\newcommand{\Z}{\mathbb{Z}}
\newcommand{\Rn}{{\mathbb{R}^n}}

\def\supp{\operatornamewithlimits{supp}}

\def\dist{\qopname\relax o{dist}}

\DeclareMathOperator{\divop}{div}
\renewcommand{\div}{\divop}
\renewcommand{\phi}{\varphi}
\def\le{\leqslant}
\def\leq{\leqslant}
\def\ge{\geqslant}

\def\phi{\varphi}
\def\rho{\varrho}
\def\epsilon{\varepsilon}
\def\vartheta{\theta}

\def\px{{p(\cdot)}}

\def\loc{{\rm loc}}
\def\BV{{\rm BV}}

\newcommand{\inc}[1]{\hyperref[def:aInc]{{\normalfont(Inc){\ensuremath{_{#1}}}}}}
\newcommand{\dec}[1]{\hyperref[def:aDec]{{\normalfont(Dec){\ensuremath{_{#1}}}}}}
\newcommand{\ainc}[1]{\hyperref[def:aInc]{{\normalfont(aInc){\ensuremath{_{#1}}}}}}
\newcommand{\adec}[1]{\hyperref[def:aDec]{{\normalfont(aDec){\ensuremath{_{#1}}}}}}
\newcommand{\adeci}[1]{\hyperref[def:aDeci]{{\normalfont(aDec){\ensuremath{_{#1}^\infty}}}}}
\newcommand{\azero}{\hyperref[def:a0]{{\normalfont(A0)}}}
\newcommand{\aone}{\hyperref[def:a1]{{\normalfont(A1)}}}
\newcommand{\VA}{\hyperref[def:va1]{{\normalfont(VA1)}}}
\newcommand{\aones}[1]{\hyperref[def:a1s]{{\normalfont(A1-{\ensuremath{{#1}})}}}}

\newcommand{\Phiw}{\Phi_{\text{\rm w}}}
\newcommand{\Phic}{\Phi_{\text{\rm c}}}

\newcommand{\nablaa}{\nabla^a}

\date{\today}

\begin{document}

\title{Bounded variation spaces with generalized Orlicz growth related to image denoising}

\date{\today}

\author{Michela Eleuteri}
\address{Michela Eleuteri, Dipartimento di Scienze Fisiche, Informatiche e Matematiche, 
Università degli Studi di Modena e Reggio Emilia, Italy}
\email{\texttt{michela.eleuteri@unimore.it}}

\author{Petteri Harjulehto}
 \address{Petteri Harjulehto,
 Department of Mathematics and Statistics,
FI-00014 University of Helsinki, Finland}
\email{\texttt{petteri.harjulehto@helsinki.fi}}

\author{Peter Hästö}
 \address{Peter Hästö, Department of Mathematics and Statistics,
FI-20014 University of Turku, Finland}
\email{\texttt{peter.hasto@utu.fi}}

\begin{abstract}
Motivated by the image denoising problem and the undesirable stair-casing effect 
of the total variation method, we introduce bounded variation spaces with generalized Orlicz growth. 
Our setup covers earlier variable exponent and double phase models. 
We study the norm and modular of the new space and derive a formula for the modular in terms 
of the Lebesgue decomposition of the derivative measure and a location dependent recession function. 
We also show that the modular can be obtained as the $\Gamma$-limit of uniformly convex 
approximating energies. 
\end{abstract}

\keywords{Generalized bounded variation, generalized Orlicz space, Musielak--Orlicz space, 
non-standard growth, Gamma-convergence, minimizer, image denoising, variable exponent, double phase.}
\subjclass[2020]{35J60; 26B30, 35B40, 35J25, 46E35, 49J27, 49J45.}

\maketitle

\section{Introduction}

In PDE-based image processing, a function $u:\Omega\to \R$ represents 
the gray-scale intensity at each location of an image. Edges of objects 
correspond to discontinuities of $u$ and make this field challenging for function spaces 
and the calculus of variations. The space $\BV$ of functions of bounded variation has proven 
to be useful in the field. We refer to the book \cite{AubK06} by Aubert and Kornprobst for an overview.
The classical ROF image restoration/denoising model \cite{RudOF92} calls for minimizing the energy 
\[
\inf_{u\in\BV(\Omega)} \int_\Omega |Du| + |u-f|^2\, dx,
\]
where $f\in L^2(\Omega)$ is the given, corrupted input image that is to be restored. 
The \textit{fidelity term $|u-f|^2$} forces $u$ to be close to $f$ on average, whereas the 
\textit{regularizing term $|Du|$} limits the variation of $u$. This model suffers from 
a stair-casing effect that leads to piecewise constant minimizers \cite{ChaL97, Jal16}. For a recent 
overview of autonomous variants of the model we refer to \cite{PagPRV_pp}.

Image restoration has also been approached with non-autonomous energies that treat different locations 
differently. The first such model, by Chen, Levine and Rao \cite{CheLR06}, involves
the minimization of 
\begin{equation}\label{eq:CLR}
\min_{u\in \BV(\Omega)} \int_\Omega \phi_{clr}(x, |Du|)+ |u-f|^2\, dx,
\end{equation}
where the regularizing term has variable exponent growth for small energies and is given by 
\[
\phi_{clr}(x,t):= 
\begin{cases}
\tfrac1{p(x)}t^{p(x)}, &\text{when } t\in [0,1], \\ 
t - 1+\tfrac1{p(x)}, &\text{when } t>1. 
\end{cases} 
\]
The variable exponent $p:\Omega\to (1,2]$ is a function bounded away from $1$ 
(i.e.\ $p^-:=\inf p >1$) which should be chosen close to $2$ in smooth areas of the image and close 
to $1$ near likely edges to avoid stair-casing as well as blurring. 
Since $\phi(x,t)\sim t$ as $t\to\infty$, this model can be analyzed in the 
classical $\BV$-space. 
Furthermore, using the Lebesgue decomposition of the derivative measure $Du$, Chen, Levine and Rao define
\[
\int_\Omega \phi_{clr}(x, |Du|)\, dx
:=
\int_\Omega \phi_{clr}(x, |\nablaa u|)\, dx + |D^su|(\Omega),
\]
where $\nablaa u$ is the density of the absolutely continuous part of the derivative.
They prove for instance that
\begin{equation}\label{eq:CLRduality}
\int_\Omega \phi_{clr}(x, |Du|)\, dx
= 
\sup_{w\in C^1_0(\Omega; \Rn), |w|\le 1} \int_\Omega u \div w - \tfrac1{p'(x)} |w|^{p'(x)}\, dx 
\end{equation}
and use this duality formulation to prove existence and properties of minimizers of \eqref{eq:CLR}. 
The reason why we call this a duality formulation and the rationale behind the term $\tfrac1{p'(x)} |w|^{p'(x)}$ 
will become clear once we introduce a more general framework. 

Subsequently, Li, Li and Pi \cite{LiLP10} proposed an image restoration model in the 
variable exponent space $W^{1,\px}(\Omega)$ with energy $\phi_\px(x,t):=t^{p(x)}$ and $p^->1$. 
The last restriction implies that the problem involves only reflexive Sobolev spaces and 
that the minimizers are $C^{1,\alpha}$, so theoretically it is ill-suited to the image 
processing context. 
Harjulehto, H\"ast\"o, Latvala and Toivanen \cite{HarHL08, HarHLT13} considered 
the same energy without the restriction $p^->1$. In this case, a relaxation procedure 
shows that the ``correct'' energy for $\BV$-functions is 
\[
\int_\Omega \phi_\px(x, |Du|)\, dx
:=
\int_\Omega \phi_\px(x, |\nablaa u|)\, dx + |D^su|(\{p=1\})
\]
provided $|D^su|(\{p>1\})=0$, analogously to the Chen--Levine--Rao formula \eqref{eq:CLRduality}. 

More recently, double phase energies have attracted the attention of many in the field 
of non-standard growth \cite{BaaBL22, BarCM18, ColM15a, DeF22, FarFW22, LiuP22, MizS21}. Most important for image processing 
is the version $\phi_{dp}(x,t):= t + a(x)t^2$ with $a\ge 0$ and powers $1$ and $2$. 
Harjulehto and H\"ast\"o \cite{HarH21} considered this energy with the interpretation 
\[
\int_\Omega \phi_{dp}(x, |Du|)\, dx
:=
\int_\Omega \phi_{dp}(x, |\nablaa u|)\, dx + |D^su|(\{a=0\})
\]
provided $|D^su|(\{a>0\})=0$. For instance they showed that it is the $\Gamma$-limit as $\epsilon\to 0^+$ of 
the uniformly convex approximating energies given by $\phi_\epsilon(x,t):=t^{1+\epsilon} + a(x)t^2$. 

The purpose of the present article is to introduce a general model which covers all these cases 
as well as countless variants like the perturbed variable exponent model and the Orlicz double phase model
(see \cite{HasO22a, HasO22b} for a list on variants with references). 
Generalized Orlicz spaces, also known as Musielak--Orlicz spaces, have been widely studied 
recently (see, e.g., \cite{ChlGSW21, HadSSV_pp, HurOS_pp, WanZ22, WeiXY22}). 
We consider a generalized $\Phi$-function $\phi:\Omega\times [0,\infty)\to[0,\infty]$ 
which may have linear growth at infinity at some points and superlinear growth at others. 
The dual space in the linear case is $L^\infty$ which can be seen in the restriction $|w|\le 1$ in 
\eqref{eq:CLRduality}. This space lacks several nice properties but it is nevertheless very concrete. 
However, to deal with the general case we consider the space $L^{\phi^*}(\Omega)$ given by the 
conjugate function $\phi^*$. Now the linearity of $\phi$ means that $\phi^*$ is not doubling; 
in fact, it is not even finite. Consequently, we can neither use the theory of 
doubling $\Phi$-functions, nor the concreteness of the space $L^\infty(\Omega)$. 
Fortunately, the theory of non-doubling variable exponent and generalized Orlicz spaces has 
been developed in \cite{DieHHR11, HarH19} 
and we know for instance that the maximal operator is bounded irrespective of doubling. 
Nevertheless, we need new types of approximation estimates that handle the transition 
between the $L^1$-, $L^p$- and $L^\infty$-regimes without extra constants which can ruin 
an argument in the non-doubling case. 
These techniques require subtly stronger assumptions on $\phi$, as the usual \aone{} 
does not suffice (see Example~\ref{eg:weightNeeded}).

Duality is a commonly used strategy in $\BV$-spaces and image restoration. We use it to define 
appropriate norms $V_\phi$ and modulars $\rho_{V,\phi}$ and study their properties in Section~\ref{sect:basic}. 
To our knowledge, this is the first time that the duality approach has been used to define a modular
in a Sobolev-type space. In Section~\ref{sect:approximation}, we consider 
approximation with respect to $V_\phi$ and the new space 
$\BV^\phi(\Omega)$ which generalizes $\BV(\Omega)$. The main result (Theorem~\ref{thm:exactFormula}) 
provides the formula 
\[
\rho_{V,\phi}(u) = \rho_\phi(|\nablaa u|) + \int_\Omega \phi'_\infty \, d|D^su|
\]
for the modular in terms of the \textit{recession function
$\phi'_\infty:\Omega\to[0,\infty]$} defined by 
\[
\phi'_\infty(x):= \limsup_{t\to \infty} \frac{\phi(x, t)} t.
\]
This function is often used in relaxation including in image processing (see, e.g., \cite{AmeGZ14,PagPRV_pp}). 
However, since we consider the non-autonomous case, our recession function 
depends on $x$ and so acts as a weight on the singular part of the function. 
For instance in the case $\phi(x,t):=t^{p(x)}$ we have $\phi'_\infty=1$ in the set $\{p=1\}$ and 
$\phi'_\infty=\infty$ elsewhere. This example shows that the continuity of $\phi$ does not ensure 
the continuity of $\phi'_\infty$. Furthermore, this makes the non-autonomous case much more 
difficult than the autonomous case, where the space $\BV^\phi$ reduces to classical $\BV$- or Sobolev spaces 
(see Corollary~\ref{cor:Orlicz}).

Using this formula we conclude the paper in Section~\ref{sect:Gamma} by showing the $\Gamma$-convergence 
of regularized functionals from \cite{EleHH_pp} to $\rho_{V,\phi}$. 
We start with background (Section~\ref{sect:background}) and auxiliary results (Section~\ref{sect:auxiliary}). 
A critical tool of independent interest is the Young convolution inequality 
with asymptotically sharp constants (Corollary~\ref{cor:convolution}).


\section{Background}
\label{sect:background}

\subsection*{Notation and terminology}

Throughout the paper we always consider a 
bounded domain $\Omega \subset \Rn$, i.e.\ an open and connected 
set. By $p':=\frac p {p-1}$ we denote the H\"older conjugate exponent 
of $p\in [1,\infty]$. The notation $f\lesssim g$ means that there exists a constant
$c>0$ such that $f\le c g$. The notation $f\approx g$ means that
$f\lesssim g\lesssim f$ whereas $f\simeq g$ means that 
$f(t/c)\le g(t)\le f(ct)$ for some constant $c \ge 1$. 
By $c$ we denote a generic constant whose
value may change between appearances.
A function $f$ is \textit{almost increasing} (more precisely, $L$-almost increasing) if there
exists $L \ge 1$ such that $f(s) \le L f(t)$ for all $s \le t$.
\textit{Almost decreasing} is defined analogously.
By \textit{increasing} we mean that the inequality holds for $L=1$ 
(some call this non-decreasing), similarly for \textit{decreasing}. 

Consider a function $\|\cdot\|: X \to [0, \infty]$ on a real vector space $X$ and the 
following conditions:
\begin{itemize}
\item[(N1)] $\|f\|=0$ implies that $f=0$.
\item[(N2)] $\|af\| = |a| \|f\|$ for all $f \in X$ and $a \in \R$;
\item[(N3)] $\|f+g\| \le \|f\|+ \|g\|$ for all $f, g \in X$.
\item[(N3$'$)] $\|f+g\| \lesssim \|f\|+ \|g\|$ for all $f, g \in X$.
\end{itemize}
We use the following terminology for $\|\cdot\|$:\\[4pt]
\centerline{
\begin{tabular}{rcccc}
&(N1)& (N2)& (N3) & (N3$'$) \\
\hline
\textit{quasi-seminorm} & &\checkmark&&\checkmark\\ 
\textit{seminorm} & &\checkmark&\checkmark&\\ 
\textit{quasinorm} & \checkmark&\checkmark&&\checkmark\\ 
\textit{norm} &\checkmark &\checkmark & \checkmark &\\ 
\end{tabular}}

\subsection*{Generalized Orlicz spaces}

We first define types of modulars that generate our spaces. 
Note that our terminology differs from Musielak \cite{Mus83}. 
Our justification is the following: a 
quasi-semimodular generates a quasi-seminorm, a semimodular generates a seminorm, etc.

\begin{defn}\label{def:quasiConvexsemimodular}
Let $X$ be a real vector space. A function $\rho:X \to [0,\infty]$ is called a \textit{quasi-semimodular} on $X$ if:
\begin{enumerate}
\item $\rho(0_{X})=0$;
\item the function $\lambda\mapsto\rho(\lambda x)$ is increasing on $[0,\infty)$ for every $x\in X$;
\item $\rho (-x)=\rho(x)$ for every $x\in X$;
\item there exists $\beta\in (0,1]$ such that $\rho(\beta(\alpha x +(1-\alpha)y) ) \leqslant \alpha\rho(x) + 
(1-\alpha)\rho(y)$ 
for every $x,y\in X$ and every $\alpha \in [0,1] $.
\end{enumerate}
If (4) holds with $\beta=1$, then $\rho$ is a \textit{semimodular}.
A (quasi-)semimodular is called a \textit{(quasi)modular} provided $\rho(x)=0$ if and only if $x=0_X$. 
\end{defn}

\begin{defn}\label{def:modularSpace}
If $\rho$ is a quasi-semimodular in $X$, then the \textit{modular space
$X_\rho:=\{x \in X \mid \|x\|_\rho<\infty\}$} is defined by the quasi-seminorm
\[
\|x\|_{\rho}:= \inf \bigg\{\lambda >0 \,\Big|\,\rho\Big(\frac{x}{\lambda}\Big) \le 1 \bigg\}.
\]
\end{defn}

The next definitions are from \cite{HarH19}.
Our previous works were based on conditions defined for almost every point $x\in \Omega$. 
In this article we also use singular measures, so the assumptions are adjusted to hold for every point, 
following \cite{HasJR_pp}. We denote by $L^0(\Omega)$ the set of measurable functions in $\Omega$. 

\begin{defn}
\label{def2-1}
We say that $\phi: \Omega\times [0, \infty) \to [0, \infty]$ is a 
\textit{weak $\Phi$-function}, and write $\phi \in \Phiw(\Omega)$, if 
the following conditions hold for every $x \in \Omega$:
\begin{itemize}
\item 
$\phi(\cdot, |f|)$ is measurable for every $f\in L^0(\Omega)$.
\item
$t \mapsto \phi(x, t)$ is increasing. 
\item 
$\displaystyle \phi(x, 0) = \lim_{t \to 0^+} \phi(x,t) =0$ and $\displaystyle \lim_{t \to \infty}\phi(x,t)=\infty$.
\item 
$t \mapsto \frac{\phi(x, t)}t$ is $L$-almost increasing on $(0,\infty)$ with 
constant $L$ independent of $x$.
\end{itemize}
If $\phi\in\Phiw(\Omega)$ is additionally convex and left-continuous with respect to $t$ for every $x\in\Omega$, then $\phi$ 
is a 
\textit{convex $\Phi$-function} and we write $\phi \in \Phic(\Omega)$. If $\phi$ does not depend on $x$, then we 
omit the set and write $\phi \in \Phiw$ or $\phi \in \Phic$.
\end{defn}

Since the range of $\phi$ is $[0,\infty]$, convexity can be defined as usual by the 
inequality 
\[
\phi(x, \theta t+(1-\theta)s)\le \theta\phi(x, t)+(1-\theta)\phi(x,s)
\] 
including the 
case $\infty\le\infty$. As we deal with conjugates of linear growth at infinity, it is crucial that 
we allow extended real-valued $\Phi$-functions. Chlebicka, Gwiazda and colleagues 
(e.g.\ \cite{BorC22, ChlGSW21}) have considered the case of non-doubling N-functions;
however, this is not sufficient here since N-functions exclude $L^1$- and $L^\infty$-spaces which are needed.

\begin{defn}
\label{rhophi}
Let $\varphi \in \Phiw(\Omega)$ and $\displaystyle
\rho_\phi(f) := \int_{\Omega} \varphi (x, |f|) \, dx$
for all $f \in L^0(\Omega)$. The set
\[
L^{\varphi}(\Omega) 
:= 
(L^0(\Omega))_{\rho_\phi}
=
\big\{f \in L^0(\Omega) \mid \rho_\phi(\lambda  f) < \infty \quad \textnormal{for some }\lambda > 0\big\}
\]
with quasinorm given by
$
\|f\|_\phi 
:=
\|f\|_{\rho_\phi}
$
is called a \textit{generalized Orlicz space}.
We use the abbreviation $\|v\|_\phi := \big\| |v|\big\|_\phi$ 
for vector-valued functions.
\end{defn}

We observe that $\|\cdot\|_\phi$ is a quasinorm in $L^\phi(\Omega)$ 
if $\phi \in \Phiw(\Omega)$, and a norm if $\phi \in \Phic(\Omega)$ \cite[Lemma~3.2.2]{HarH19}. 
We define two Sobolev spaces; the space $L^{1,\phi}$ is sometimes denoted by $V^1L^\phi$, indicating the that 
first variation $\nabla u$ belongs to $L^\phi$. Note that $W^{1, \phi}(\Omega) = L^{1, \phi} (\Omega) \cap L^\phi(\Omega)$.

\begin{defn}
Let $\varphi \in \Phiw(\Omega)$. 
A function $u \in W^{1,1}(\Omega)$ belongs to 
the \textit{Sobolev space} $W^{1, \varphi}(\Omega)$ if $|u|, |\nabla u| \in L^\phi(\Omega)$
and to 
the \textit{Sobolev space} $L^{1, \varphi}(\Omega)$ if $|\nabla u| \in L^\phi(\Omega)$. 
The spaces are equipped with the (quasi)norms 
\[
\|u\|_{W^{1, \varphi}(\Omega)} := \|u\|_\phi + \|\nabla u\|_\phi
\quad\text{and}\quad
\|u\|_{L^{1, \phi}(\Omega)} := \|u\|_{L^1(\Omega)} + \|\nabla u\|_{L^{\phi}(\Omega)}.
\]
\end{defn}

When $\phi$ in a sub- or superscript is replaced by a real number (e.g., $L^{1,p}$ or $\rho_2$), 
this is an abbreviation for the $\Phi$-function $\phi(x,t)\equiv t^p$.

\section{Auxiliary results}
\label{sect:auxiliary}

\subsection*{Regularity conditions for harmonic analysis and PDE}

We say that $\omega: [0, \infty) \to [0, \infty]$ is a 
\textit{modulus on continuity} if it is increasing and
$\omega (0) = \lim_{t \to 0^+} \omega(t)=0$. Note that we do not require concavity and 
allow extended real values. 

For $\phi:\Omega\times [0,\infty)\to [0,\infty)$ and $p,q>0$ we define some conditions:
\begin{itemize}[leftmargin=4em]
\item[(A0)]\label{def:a0}
There exists $\beta \in(0, 1]$ such that $\phi(x, \beta) \le 1 \le \phi(x, \frac1\beta)$ 
for every \ $x \in \Omega$. 
\item[(A1)]\label{def:a1}
For every $K>0$ there exists $\beta \in (0,1]$ such that, for every $x,y\in \Omega$,
\[ 
\phi(x,\beta t) \le \phi(y,t)+1 \quad\text{when}\quad \phi(y, t) \in \bigg[0, \frac{K}{|x-y|^n}\bigg].
\]
\item[(VA1)]\label{def:va1}
For every $K>0$ there exists a modulus of continuity $\omega$ such that, 
for every $x,y\in \Omega$,
\[ 
\phi(x,\tfrac t{1+\omega(|x-y|)}) \le \phi(y,t)+\omega(|x-y|) \quad\text{when}\quad \phi(y, t) \in \bigg[0, 
\frac{K}{|x-y|^n}\bigg].
\]
%

\item[(aInc)$_p$] \label{def:aInc} 
There exists $L_p\ge 1$ such that 
$t \mapsto \frac{\phi(x,t)}{t^{p}}$ is $L_p$-almost 
increasing in $(0,\infty)$ for every $x\in\Omega$.

\item[(aDec)$_q$] \label{def:aDec}
There exists $L_q\ge 1$ such that 
$t \mapsto \frac{\phi(x,t)}{t^{q}}$ is $L_q$-almost 
decreasing in $(0,\infty)$ for every $x\in\Omega$.
\end{itemize} 
%
%
We say that \ainc{} holds if \ainc{p} holds for some $p>1$, and similarly for \adec{}.
%
%

If $\phi\in \Phiw(\Omega)$, then 
$\phi(\cdot ,1)\approx 1$ implies \azero{}, and if $\phi$ satisfies \adec{}, then \azero{} and 
$\phi(\cdot ,1)\approx 
1$ are equivalent. For instance, $\phi(x, t)=t^p$ always satisfies 
\azero{}, since $\phi(x, 1) \equiv 1$. Assumption \aone{} is an almost continuity 
condition; in the variable exponent case $\phi(x, t):=t^{p(x)}$ it corresponds 
to $\log$-Hölder continuity of $\frac1p$ \cite[Proposition 7.1.2]{HarH19}. 
Finally, \ainc{} and \adec{} are quantitative versions of the $\nabla_2$ and $\Delta_2$ conditions and 
measure lower and upper growth rates. 

Note that the definition of \aone{} differs slightly from \cite{HarH19, Has15}, where it is assumed 
that 
\[ 
\phi(x,\beta t) \le \phi(y,t) \quad\text{when}\quad \phi(y, t) \in \bigg[1, \frac{1}{|B|}\bigg]
\]
and $x$ and $y$ belong to the ball $B$. 
If $\phi$ satisfies \azero{}, then this is equivalent to 
\[ 
\phi(x,\beta t) \le \phi(y,t)+1 \quad\text{when}\quad \phi(y, t) \in \bigg[0, \frac{1}{|B|}\bigg]
\]
and if $\phi$ satisfies \adec{}, then we can equivalently add in the $K$, as well, see \cite{Has_pp, HasO22b}. 

The ``vanishing \aone{}'' condition \VA{} is a continuity condition for $\phi$ which was introduced 
to prove maximal regularity of minimizers \cite{HasO22a}. In the variable exponent case it 
corresponds to vanishing $\log$-H\"older continuity. 
We need the following weaker version of \VA{} where at least one of the points 
has to belong to the set $\{\phi'_\infty<\infty\}$ defined using the recession function:

\begin{defn}
We say that $\phi \in \Phiw(\Omega)$ satisfies \textit{restricted \VA{}} if 
it satisfies \aone{} and 
for every $K>0$ there exists a modulus of continuity $\omega$ such that
\begin{equation*}
\phi(x,\tfrac t{1+\omega(|x-y|)}) \le \phi(y,t)+\omega(|x-y|) \quad\text{when}\quad \phi(y, t) \in \bigg[0, 
\frac{K}{|x-y|^n}\bigg]
\end{equation*}
for 
every $x,y\in \Omega$ with $\phi'_\infty(x) <\infty$ or $\phi'_\infty(y) <\infty$.
\end{defn} 

In \cite[Section~3]{HarHL08}, it was shown that $\log$-H\"older continuity in the variable exponent 
case was not sufficient for $\BV$-type spaces and a strong 
$\log$-H\"older continuity condition was introduced. 
As mentioned above, $\varphi(x,t) = t^{p(x)}$ satisfies 
\aone{} if and only if $\frac1p$ is $\log$-H\"older continuous. We now 
prove a corresponding connection between restricted \VA{} and strong $\log$-H\"older continuity.
For simplicity, only the case of finite exponents is considered. 

\begin{prop}\label{prop:strong}
Let $\phi(x,t):=t^{p(x)}$ be a variable exponent energy with $p:\Omega\to[1,\infty)$.
Then restricted \VA{} is equivalent to the \emph{strong $\log$-H\"older continuity} of $\frac1p$, i.e.\ 
$\log$-H\"older continuity with
\[
\lim_{x\to y} \big|1-\tfrac1{p(x)}\big| \log \tfrac{1}{|x-y|} = 0
\]
uniformly in $y\in \{p=1\}$.
\end{prop}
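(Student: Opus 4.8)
The plan is to reduce the statement to the known equivalence between \aone{} and $\log$-Hölder continuity of $\tfrac1p$ for the energy $t^{p(x)}$ (\cite[Proposition~7.1.2]{HarH19} and the discussion preceding Proposition~\ref{prop:strong}), and then to isolate the extra content carried by each side. First I would record that for $\phi(x,t)=t^{p(x)}$ the recession function is $\phi'_\infty(x)=\limsup_{t\to\infty}t^{p(x)-1}$, so $\phi'_\infty(x)=1$ when $p(x)=1$ and $\phi'_\infty(x)=\infty$ when $p(x)>1$; hence $\{\phi'_\infty<\infty\}=\{p=1\}$ and the inequality in restricted \VA{} is imposed exactly on the pairs $(x,y)$ with $p(x)=1$ or $p(y)=1$. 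Since restricted \VA{} includes \aone{} by definition, and strong $\log$-Hölder continuity includes plain $\log$-Hölder continuity by definition, it suffices to prove, assuming $\tfrac1p$ is $\log$-Hölder continuous, that the \VA{}-type inequality restricted to those special pairs is equivalent to $|1-\tfrac1{p(x)}|\log\tfrac1{|x-y|}\to 0$ uniformly in $y\in\{p=1\}$.

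For the forward implication I would fix $K=1$, take the associated modulus of continuity $\omega$, fix $y\in\{p=1\}$, and test the inequality at $t=|x-y|^{-n}$ (with $|x-y|$ small), which is admissible because $\phi(y,t)=t=|x-y|^{-n}=K|x-y|^{-n}$. Multiplying through by $|x-y|^{n}$, taking logarithms, and using $\log(1+\omega(|x-y|))\le\omega(|x-y|)$ and $\log(1+\omega(|x-y|)\,|x-y|^{n})\le\omega(|x-y|)$ turns the inequality into $n(p(x)-1)\log\tfrac1{|x-y|}\le(p(x)+1)\,\omega(|x-y|)$; dividing by $p(x)$ and noting $\tfrac{p(x)-1}{p(x)}=|1-\tfrac1{p(x)}|$ gives $|1-\tfrac1{p(x)}|\log\tfrac1{|x-y|}\le\tfrac2n\,\omega(|x-y|)\to 0$, uniformly in $y\in\{p=1\}$ because $\omega$ does not depend on $x,y$. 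For the converse I would take the vanishing modulus $\sigma$ (which I may take bounded), fix $K>0$, put $c_K:=n+|\log K|$, and set $\omega_K(r):=e^{c_K\sigma(r)}-1$ for small $r$ and a suitable large constant (depending on $n$, $K$, $\diam\Omega$ and $p^+:=\sup_{\Omega} p<\infty$) on the remaining range. Then I would verify $\phi(x,\tfrac{t}{1+\omega_K(r)})\le\phi(y,t)+\omega_K(r)$, $r:=|x-y|$, in the three cases $p(y)=1$, $p(x)=1<p(y)$, $p(x)=p(y)=1$. In the first case, after observing that the range $t\in[0,1]$ is trivial and that for $t\in[1,Kr^{-n}]$ it suffices to check $t^{p(x)-1}\le(1+\omega_K(r))^{p(x)}$, the worst value $t=Kr^{-n}$ becomes in logarithmic form $|1-\tfrac1{p(x)}|(\log K+n\log\tfrac1r)\le\log(1+\omega_K(r))$, which holds by the choice of $\omega_K$ since $|1-\tfrac1{p(x)}|\log\tfrac1r\le\sigma(r)$ (hence also $|1-\tfrac1{p(x)}|\le\sigma(r)/\log\tfrac1r$). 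In the second case one reduces to $\max_{0\le t\le 1}(t-t^{p(y)})\le\omega_K(r)$ and bounds the maximum by $1-\tfrac1{p(y)}\le\sigma(r)/\log\tfrac1r\le\omega_K(r)$, using the vanishing condition with $x$ and $y$ interchanged; the third case is immediate.

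The main obstacle I expect is bookkeeping rather than a conceptual one. One must make sure that the $\omega_K$ constructed in the converse is genuinely a modulus of continuity (tending to $0$ at the origin) while still being large enough to absorb every admissible $t$ for every $K$: on the range of $|x-y|$ bounded away from $0$ this forces using that $\log$-Hölder continuity on the bounded set $\Omega$ makes $p\le p^+<\infty$, so that the admissible values $\phi(y,t)\le K|x-y|^{-n}$, and with them $\phi(x,t)$, stay bounded there. One must also keep in mind that both the extra inequality in restricted \VA{} and the vanishing condition only constrain pairs touching $\{p=1\}$, so the plain \aone{}/$\log$-Hölder part is genuinely needed as a separate ingredient to control pairs of points that both have exponent $>1$; this is precisely why the reduction in the first paragraph is legitimate and why the proposition does require the small amount of work above.
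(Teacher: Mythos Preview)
Your proposal is correct and follows essentially the same route as the paper's proof: the same reduction to the known \aone{}/$\log$-H\"older equivalence, the same test value $t=|x-y|^{-n}$ with $K=1$ in the forward direction (your bound $\tfrac{2}{n}\,\omega(r)$ matches the paper's $\tfrac{2}{n}\log(1+\omega(r))$ up to the harmless inequality $\log(1+a)\le a$), and the same modulus $\omega_K(r)=e^{c_K\sigma(r)}-1$ in the converse, with the same case split according to which of $p(x),p(y)$ equals $1$ and the same use of $\sup_{t\ge 0}(t-t^{p(y)})\le 1-\tfrac1{p(y)}$. Your use of $|\log K|$ in $c_K$ and your remarks on the large-$r$ regime and the finiteness of $p^+$ are slightly more explicit than the paper, but do not change the argument.
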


\begin{proof}
The connection between $\log$-H\"older continuity and \aone{} was 
established in \cite[Proposition 7.1.2]{HarH19}, so it only remains to consider the vanishing 
$\log$-H\"older continuity around the set $\{p=1\}$. 
Suppose that $p(y)=1$ or, equivalently, $\phi'_\infty(y)<\infty$. Then $\phi(y,t)=t^1=t$. 
First we assume restricted \VA{} with $K=1$  and modulus of continuity $\omega$.
Choosing $t := |x - y|^{-n}\ge 1$ and denoting $r:=|x-y|$, we have
\[ 
\big(\tfrac{t}{1 + \omega(r)}\big)^{p(x)} 
=
\phi(x,\tfrac t{1+\omega(r)}) \le \phi(y,t)+\omega(r) 
\le (1+\omega(r)) t.
\]
Taking the logarithm of the equivalent inequality $t^{p(x)-1}\le (1+\omega(r))^{p(x)+1}$, we find that 
\[
\big|1-\tfrac1{p(x)}\big| \log \tfrac{1}{|x-y|} 
\le \tfrac{1+ p(x)}{np(x)} \log(1 + \omega(r)) 
\le \tfrac{2}{n} \log(1 + \omega(r)) \to 0
\]
as $r\to 0^+$. Thus $p$ is strongly $\log$-H\"older continuous.

Assume conversely that $p$ is strongly $\log$-H\"older continuous so that 
\[
\omega_p(r):=
\sup_{y\in\{p=1\},\, x\in B_r(y)} \big|1-\tfrac1{p(x)}\big| \log \tfrac{1}{|x-y|}
\to 0
\]
as $r\to 0^+$. To establish the restricted \VA{}-condition when $p(y) = 1$, 
it is enough that
\[
t^{p(x) - 1} \le  (1 + \omega(r))^{p(x)}.
\]
The inequality is trivial if  $t \in [0, 1]$.  So let $t>1$.
The left-hand side is increasing in $t$ so the worst case is when $t = \frac{K}{|x-y|^n}$
and we can choose $\omega$ based on the estimate
\[
\begin{split}
t^{1-\frac1{p(x)}}-1 
&
\le 
\Big(\frac{K}{r^n}\Big)^{\frac{\omega_p(r)}{\log \frac1r}}-1 
=
e^{\frac{\log K + n\log \frac1r}{\log \frac1r} \omega_p(r)}-1 
\le
e^{(\log K + n) \omega_p(r)} - 1
=: \omega(r)
\end{split}
\]
when $r\le \frac1e$. 
The strong $\log$-H\"older continuity ensures that this tends to zero when $x\to y$. 
On the other hand, if $p(x) = 1$ in the \VA{}-condition, then we need 
\[
\frac{t}{1 + \omega(r)} \le t \le t^{p(y)} + \omega(r),
\]
which holds since $\sup_{t\ge 0} (t - t^{p(y)}) = p(y)^{-p'(y)}(p(y)-1) \le 1-\frac1{p(y)} 
\le \omega_p(r) \le \omega(r)$ for all small $r>0$.
\end{proof}


\subsection*{Inequalities with sharp constants}

Analogues of Jensen's inequality \cite[Theorem~4.3.2]{HarH19} and 
Young's convolution inequality \cite[Lemma~4.4.6]{HarH19} are known in the generalized 
Orlicz space under the \aone{} assumption, but only with constants $\beta\ll 1$. 
Here we show that the \VA{} assumption lets us choose the constant $\beta\to 1^-$ at the 
price of restricting to a small ball. 
The next result is an improvement of \cite[Theorem~2.3]{HasJR_pp}. Note that we do not 
assume \adec{}. This makes the proof more difficult but is critical to the application 
in this article. 

\begin{thm}[Jensen's inequality]\label{thm:jensen}
If $\phi\in \Phic(\Omega)$ satisfies \VA{} and $\mu$ is a probability measure in the ball $B=B_r$
with $|B|\, \|\frac{d\mu}{dx}\|_\infty=:m<\infty$, then 
\[
\phi_B^-\bigg(\frac1{1+\omega(r)} \int_{B\cap \Omega} |f|\, d\mu\bigg)
\le
\int_{B\cap \Omega} \phi(x, f)\, d\mu + \omega(r),
\]
where $\omega$ be the modulus of continuity from \VA{} with $K:=m\rho_\phi(f)+2$
and $r>0$ is so small that $\omega(r)\le \frac1{|B|}$.
\end{thm}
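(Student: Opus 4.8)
The plan is to reduce the statement, via a density/approximation argument, to a pointwise comparison of $\phi(x,\cdot)$ with a fixed one-variable $\Phi$-function on the ball $B$, so that the classical Jensen inequality can be applied. Write $\Phi:=\phi_B^-$, i.e.\ $\Phi(t):=\inf_{x\in B\cap\Omega}\phi(x,t)$; since each $\phi(x,\cdot)$ is convex and increasing with $\phi(x,0)=0$, the infimum $\Phi$ is convex and increasing with $\Phi(0)=0$, so $\Phi\in\Phic$ (after checking left-continuity, or working with its left-continuous regularization, which does not affect the inequality). The classical Jensen inequality then gives
\[
\Phi\bigg(\int_{B\cap\Omega}|f|\,d\mu\bigg)\le\int_{B\cap\Omega}\Phi(f)\,d\mu\le\int_{B\cap\Omega}\phi(x,f)\,d\mu,
\]
where the last step is just $\Phi\le\phi(x,\cdot)$ pointwise. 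The $\omega(r)$ terms and the dilation factor $\frac1{1+\omega(r)}$ have not yet entered; they are needed to upgrade this from a statement about $\Phi=\phi_B^-$ to the actual conclusion, which is precisely where \VA{} is used. Indeed \VA{} with the given $K$ tells us that for $x,y\in B\cap\Omega$ with $\phi(y,t)\le K/|x-y|^n$ we have $\phi(x,\tfrac t{1+\omega(r)})\le\phi(y,t)+\omega(r)$; taking the infimum over $x$ (so the left side becomes $\Phi(\tfrac t{1+\omega(r)})$) and using $|x-y|<2r$, this says $\Phi(\tfrac t{1+\omega(r)})\le\phi(y,t)+\omega(r)$ for the relevant range of $t$. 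Applying this with $y$ ranging over $B\cap\Omega$ and $t=f(y)$, integrating against $d\mu$, and combining with the displayed Jensen estimate (for $\Phi$, with $f$ replaced by $\tfrac f{1+\omega(r)}$ inside), we obtain
\[
\Phi\bigg(\frac1{1+\omega(r)}\int_{B\cap\Omega}|f|\,d\mu\bigg)=\Phi\bigg(\int_{B\cap\Omega}\tfrac{|f|}{1+\omega(r)}\,d\mu\bigg)\le\int_{B\cap\Omega}\Phi\big(\tfrac{f}{1+\omega(r)}\big)\,d\mu\le\int_{B\cap\Omega}\phi(x,f)\,d\mu+\omega(r),
\]
which is the claim.

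The delicate point is controlling the \emph{range restriction} in \VA{}: the inequality $\phi(x,\tfrac t{1+\omega(r)})\le\phi(y,t)+\omega(r)$ is only asserted when $\phi(y,t)\le K/|x-y|^n$, and for points $y$ where $f(y)$ is large this may fail. This is exactly why the hypothesis prescribes $K:=m\rho_\phi(f)+2$ and why the factor $m=|B|\,\|\tfrac{d\mu}{dx}\|_\infty$ and the constraint $\omega(r)\le\frac1{|B|}$ appear. The strategy for this step is a splitting: let $G:=\{y\in B\cap\Omega:\phi(y,f(y))> K/(2r)^n\}$ be the "bad" set. On $B\setminus G$ the pointwise \VA{} estimate applies directly (since $|x-y|<2r$), and on $G$ one estimates the measure of $G$ using Chebyshev: because $d\mu\le\tfrac m{|B|}\,dx$,
\[
\mu(G)\le\frac m{|B|}\,|G|\le\frac m{|B|}\cdot\frac{(2r)^n}{K}\int_{B\cap\Omega}\phi(y,f(y))\,dy\le\frac{m\,\rho_\phi(f)}{K}\cdot\frac{(2r)^n}{|B|},
\]
and one absorbs the contribution of $G$ into the $\omega(r)$ error. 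The \adec{}-free setting makes this absorption more subtle than in \cite{HasJR_pp}: without \adec{} we cannot bound $\phi(x,\tfrac t{1+\omega(r)})$ by $c\,\phi(x,t)$, so on $G$ we must instead discard the $\Phi$-mass contributed by $G$ altogether, using convexity of $\Phi$ to split $\int\Phi(\tfrac f{1+\omega(r)})\,d\mu$ over $G$ and $B\setminus G$ and crudely bounding the $B\setminus G$ average of $f$ from above — this is the main obstacle and will require a careful choice of the normalization of $K$ relative to $m$ and $\rho_\phi(f)$.

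One technical wrinkle to handle first: the conclusion is stated for $\phi\in\Phic(\Omega)$ but the \VA{}-hypothesis is phrased for $\phi:\Omega\times[0,\infty)\to[0,\infty)$ (finite-valued). Since here $\phi\in\Phic(\Omega)$ need not be finite, I would first reduce to the finite case by truncating, $\phi_j(x,t):=\min\{\phi(x,t),j\}$ — but $\min$ destroys convexity, so instead use the standard truncation $\phi_j(x,t):=\phi(x,\min\{t,j\})+$ a linear continuation, or simply observe that the inequality need only be proved with $f$ replaced by $f\wedge N$ and $N\to\infty$ via monotone convergence (left-continuity of $\phi(x,\cdot)$ and of $\Phi$ makes the limits pass through). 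Alternatively, since $\rho_\phi(f)<\infty$ is needed for $K$ to be finite anyway, $f$ is finite $\mu$-a.e.\ on the set where $\phi(x,\cdot)$ is proper, and the genuinely infinite part contributes $+\infty$ to the right-hand side, making the inequality trivial there. I would dispose of this reduction in one or two sentences at the start and then run the splitting argument above.
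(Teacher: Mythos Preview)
Your proof has a fundamental gap: the function $\Phi=\phi_B^-=\inf_{x\in B\cap\Omega}\phi(x,\cdot)$ is \emph{not} convex in general, so the classical Jensen inequality cannot be applied to it. The pointwise infimum of a family of convex functions need not be convex (unlike the supremum). For a concrete example in this setting, take $\phi(x_1,t)=t$ and $\phi(x_2,t)=t^2$; then $\phi_B^-(t)\le\min(t,t^2)$, which has left derivative $2$ and right derivative $1$ at $t=1$. Once this step fails, the rest of your chain collapses. Indeed, notice that if $\Phi$ \emph{were} convex your displayed inequality would give the conclusion even without the $\omega(r)$ terms, since $\Phi(s)\le\phi(y,s)$ holds trivially and the \VA{} step you invoke is superfluous; this should signal that the difficulty has been misplaced. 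The dilation $\tfrac1{1+\omega(r)}$ and the additive $\omega(r)$ in the statement are there precisely to compensate for the non-convexity of $\phi_B^-$ via \VA{}, not to pass from $\Phi$ to something else.

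The paper avoids the convexity problem by working at a single near-infimizing point $x_0\in B$ (chosen so that $\phi'(x_0,\tfrac{t_0}{1+\omega(r)})$ is essentially minimal, where $t_0=\int|f|\,d\mu$) and building an auxiliary convex function
\[
\psi(t):=\int_0^t\phi'(x_0,\min\{s,\beta t_0\})\,ds,
\]
which agrees with $\phi(x_0,\cdot)$ on $[0,\beta t_0]$ and is linear beyond. One then shows $\psi(\beta t)\le\phi(x,t)+\omega(r)$ for every $x\in B$ and every $t\ge0$: for $t\le t_0$ this is \VA{} when $\phi(x,t)\le K/|B|$ and trivial otherwise; for $t>t_0$ the linearity of $\psi$ together with the convexity of $\phi(x,\cdot)$ gives the bound with no range restriction at all, which is how the ``bad set'' difficulty you identify is actually handled. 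Classical Jensen for $\psi$ yields $\phi(x_0,\beta t_0)=\psi(\beta t_0)\le\int\phi(x,|f|)\,d\mu+\omega(r)$. The last ingredient is a bootstrap: the inequality just obtained shows $\phi(x_0,\beta t_0)\le (K-1)/|B|$, so the \VA{} range condition persists for slightly larger $\beta$, and a left-continuity argument then pushes $\beta$ all the way to $\tfrac1{1+\omega(r)}$. This self-improvement mechanism replaces your Chebyshev/splitting scheme, which you correctly flagged as unresolved.
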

\begin{proof}
We define $t_0:= \int_{B\cap \Omega}|f|\, d\mu$. By \cite[Lemma~4.3.1]{HarH19}, there exists $\beta>0$ such that 
\[
\phi_B^-\bigg(\beta\int_{B\cap \Omega} |f|\, d\mu\bigg)
\le
\int_{B\cap \Omega} \phi(x, f)\, d\mu.
\]
If $t_0=\infty$, this implies that the right-hand side of the claim is 
infinite so there is nothing to prove. Thus we may assume that $t_0<\infty$. 

Denote by $\phi'$ the left-continuous function, increasing in $s$, with 
\[
\phi(x,t) = \int_0^t \phi'(x,s)\, ds.
\]
Such a function exists since $\phi$ is convex in the second variable.
Fix $x_0\in B$ with 
\[
\tfrac1{1+\omega(r)} \phi'\big(x_0, \tfrac1{1+\omega(r)} t_0\big)\le (\phi')_B^-\big(  \tfrac1{1+\omega(r)} t_0\big)
\]
and assume $\beta\le \frac1{1+\omega(r)}$ is so small that $\phi(x_0,\beta t_0) \le \frac K{|B|}$.

We define $\psi\in\Phic$ by
\[
\psi(t):= \int_0^t \phi'(x_0, \min\{s, \beta t_0\})\, ds\,; 
\]
$\psi$ is convex since $\psi'$ is increasing. Furthermore, $\psi(\beta t)=\phi(x_0, \beta t)$ if $t\le t_0$. 
When $t\le t_0$ we consider two cases to show that 
\[
\psi(\beta t) 
\le \phi(x, t)+\omega(r)
\]
for $x\in B$: if $\phi(x, t)\le \frac K{|B|}$ this follows from \VA{} and otherwise it follows 
from $\phi(x_0, \beta t) \le \frac K{|B|}\le \phi(x, t)$. When 
$t> t_0$ we estimate
\begin{align*}
\psi(\beta t) 
&= 
\psi(\beta t_0) + \beta(t-t_0)\phi'(x_0, \beta t_0)
\le
\phi(x, t_0)+\omega(r) + (t-t_0)(\phi')_B^-(t_0) \\
&\le
\phi(x, t_0)+\omega(r) + (t-t_0) \phi'(x, t_0)
\le
\phi(x, t) + \omega(r),
\end{align*}
where we also used the convexity of $\phi$ in the last step. 

It follows from Jensen's inequality for $\psi$ that 
\[
\phi\big(x_0, \beta t_0\big)
=
\psi\bigg(\beta\int_{B\cap \Omega} |f|\, d\mu\bigg) 
\le 
\int_{B\cap \Omega} \psi(\beta|f|)\, d\mu
\le 
\int_{B\cap \Omega} \phi(x, |f|)\, d\mu + \omega(r). 
\]
This is the claim once we show that we can choose $\beta=\frac1{1+\omega(r)}$. 
Since the integral on the right-hand side can be estimated by $\frac m{|B|}\rho_\phi(f)=\frac{K-2}{|B|}$ and 
$\omega(r)\le \frac1{|B|}$, the inequality gives $\phi(x_0, \beta t_0) \le \frac{K-1}{|B|}$. 
To summarize, we have shown that $\phi(x_0, \beta t_0) \le \frac{K}{|B|}$ implies 
$\phi(x_0, \beta t_0) \le \frac{K-1}{|B|}$. 

We next investigate how large we can make $\beta$. 
Consider the set 
\[
\Theta := \Big\{\theta \in (0,1] \,\Big|\, \phi\big(x_0, \tfrac\theta{1+\omega(r)} t_0\big) \le \tfrac K{|B|}\Big\}.
\]
Since $\phi(x_0, t)\to 0$ when $t\to 0^+$, the set is non-empty. 
If $\theta_k\in\Theta$ with $\theta_k\nearrow \theta_0$, then the left-continuity of $\phi(x_0,\cdot)$ implies
that $\theta_0\in \Theta$. 
If $\sup \Theta = 1$, then this means that the previous Jensen inequality holds for 
$\beta=\frac1{1+\omega(r)}$ and the claim is proved. 
Suppose then that $\theta_0:=\sup \Theta \in (0,1)$. 
For $\theta_0<\theta$, this implies that 
\[
\phi\big(x_0, \tfrac{\theta_0 }{1+\omega(r)}t_0\big) \le 
\tfrac{K-1}{|B|} < 
\tfrac K{|B|} < 
\phi\big(x_0, \tfrac\theta{1+\omega(r)}t_0\big).
\]
Since $\phi(x_0,\cdot)$ is convex, such discontinuity is 
only possible if the right-hand side equals infinity for every $\theta>\theta_0$.
If $\phi(x_0, \frac1{1+\omega(r)} t_0)=\infty$, then 
\[
\infty = \phi'(x_0, \tfrac1{1+\omega(r)} t_0) \le (1+\omega(r)) (\phi')_B^-(\tfrac1{1+\omega(r)} t_0)
\] 
by the choice of $x_0$. 
It follows that $\phi(x, t_0)=\infty$ for every $x\in B$. 
The set $A:=\{x\in B\mid |f(x)|\ge t_0\}$ 
has positive $\mu$-measure since $t_0$ is the $\mu$-average of $|f|$. Thus also 
$\int_{B\cap \Omega} \phi(x, |f|)\, d\mu =\infty$, so the claim holds in the form $\infty\le\infty$ in 
this case.
\end{proof}

The convolution in the next result should be understood as 
\[
f*\eta(x) := \int_\Omega f(y)\eta(x-y)\, dy
\]
to account for the fact that $f$ and $\phi$ are only defined in $\Omega$. Extending $\phi$ 
outside $\Omega$ while preserving \VA{} is non-trivial, but luckily that is not needed here.

\begin{cor}[Young's convolution inequality]\label{cor:convolution}
Let $\phi\in \Phic(\Omega)$ satisfy \VA{} and $\eta$ be the standard mollifier. 
Then there exists a modulus of continuity $\omega$ such that 
\[
\rho_\phi\big(\tfrac1{1+\omega(\delta)} f*\eta_\delta\big)
\le
\rho_\phi(f) + \omega(\delta)
\]
for every $\delta>0$.
\end{cor}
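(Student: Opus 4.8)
The proof of Corollary~\ref{cor:convolution} follows from Theorem~\ref{thm:jensen} by the standard trick of viewing convolution as a sequence of averages. The plan is to fix $\delta>0$, apply the pointwise Jensen estimate from Theorem~\ref{thm:jensen} on the ball $B_\delta(x)$ with the probability measure $d\mu_x(y):=\eta_\delta(x-y)\,dy$ (restricted to $\Omega$), and then integrate over $x\in\Omega$.

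First I would set up the measure. For $x\in\Omega$, the mollifier $\eta_\delta$ is supported in $B_\delta(0)$, so $\mu_x$ is a probability measure carried by $B_\delta(x)$; writing $B:=B_\delta$ we have $|B|\,\|\tfrac{d\mu_x}{dy}\|_\infty = |B_\delta|\,\|\eta_\delta\|_\infty =: m$, a dimensional constant independent of $x$ and $\delta$ (since $\|\eta_\delta\|_\infty = \delta^{-n}\|\eta\|_\infty$). Apply Theorem~\ref{thm:jensen} with $K := m\,\rho_\phi(f)+2$ and a modulus of continuity $\omega$ chosen so that additionally $\omega(\delta)\le\frac1{|B_\delta|}$ for all $\delta>0$ (shrinking $\omega$ if necessary; note $\frac{1}{|B_\delta|}=c_n\delta^{-n}\to\infty$, so this only constrains $\omega$ near large arguments, which is harmless). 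This yields, for each $x\in\Omega$,
\[
\phi_{B_\delta(x)}^-\!\Big(\tfrac1{1+\omega(\delta)}\,|f*\eta_\delta(x)|\Big)
\le
\int_{B_\delta(x)\cap\Omega}\phi(y,|f(y)|)\,\eta_\delta(x-y)\,dy + \omega(\delta).
\]
Here I use $|f*\eta_\delta(x)|\le\int|f(y)|\eta_\delta(x-y)\,dy = \int_{B_\delta(x)\cap\Omega}|f|\,d\mu_x$ together with monotonicity of $\phi_{B_\delta(x)}^-$ in its argument.

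Next I would bound $\phi(x,\cdot)$ from below by the infimum $\phi_{B_\delta(x)}^-$. Since $x\in B_\delta(x)$, we have $\phi(x,s)\ge\phi_{B_\delta(x)}^-(s)$ for all $s$, hence
\[
\phi\Big(x,\tfrac1{1+\omega(\delta)}\,|f*\eta_\delta(x)|\Big)
\le
\int_{B_\delta(x)\cap\Omega}\phi(y,|f(y)|)\,\eta_\delta(x-y)\,dy + \omega(\delta).
\]
Integrating in $x$ over $\Omega$, the left-hand side is exactly $\rho_\phi\big(\tfrac1{1+\omega(\delta)}f*\eta_\delta\big)$. For the right-hand side, $\int_\Omega\omega(\delta)\,dx=|\Omega|\,\omega(\delta)$ and, by Tonelli's theorem and $\int\eta_\delta(x-y)\,dx\le 1$,
\[
\int_\Omega\int_{B_\delta(x)\cap\Omega}\phi(y,|f(y)|)\,\eta_\delta(x-y)\,dy\,dx
\le
\int_\Omega\phi(y,|f(y)|)\Big(\int_{\R^n}\eta_\delta(x-y)\,dx\Big)dy
= \rho_\phi(f).
\]
Absorbing the factor $|\Omega|$ into the modulus of continuity (replace $\omega$ by $\max\{1,|\Omega|\}\,\omega$, still a modulus of continuity), we obtain $\rho_\phi\big(\tfrac1{1+\omega(\delta)}f*\eta_\delta\big)\le\rho_\phi(f)+\omega(\delta)$, as claimed.

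The only genuine subtlety — the main obstacle — is the bookkeeping on the modulus of continuity and the constant $K$: Theorem~\ref{thm:jensen} requires $K$ to depend on $\rho_\phi(f)$ and requires the smallness condition $\omega(r)\le\frac1{|B|}$, and one must check these are compatible uniformly in $x$ and $\delta$. Since $m$ is purely dimensional and $\rho_\phi(f)$ is a fixed finite number (if $\rho_\phi(f)=\infty$ there is nothing to prove), $K$ is a single constant; and the smallness condition is automatic once $\omega$ is replaced by a sufficiently small modulus, because $\frac1{|B_\delta|}$ grows as $\delta\to\infty$ and we are free to take $\omega$ small. Everything else is Tonelli and the trivial pointwise bound $\phi(x,\cdot)\ge\phi_{B_\delta(x)}^-$.
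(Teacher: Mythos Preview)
There is a genuine gap in your argument at the step where you pass from $\phi_{B_\delta(x)}^-$ to $\phi(x,\cdot)$. You correctly observe the trivial inequality $\phi(x,s)\ge\phi_{B_\delta(x)}^-(s)$, but this goes the \emph{wrong way}: Jensen's inequality (Theorem~\ref{thm:jensen}) gives an upper bound on $\phi_{B_\delta(x)}^-(\cdot)$, and from $\phi_{B_\delta(x)}^-(s)\le A$ together with $\phi_{B_\delta(x)}^-(s)\le \phi(x,s)$ you cannot conclude $\phi(x,s)\le A$. What is needed is the opposite comparison, $\phi(x,\cdot)\lesssim \phi_{B_\delta(x)}^-(\cdot)$, which is \emph{not} trivial and in fact requires a second application of \VA{}.

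This is exactly what the paper does: after the Jensen step it first checks that $\phi_{B_\delta}^-\big(\tfrac{|f*\eta_\delta(x)|}{1+\omega(\delta)}\big)<\tfrac{K}{|B_\delta|}$, which puts the value in the range where \VA{} applies, and then invokes \VA{} to obtain
\[
\phi\Big(x,\tfrac{|f*\eta_\delta(x)|}{(1+\omega(\delta))^2}\Big)
\le
\phi_{B_\delta}^-\Big(\tfrac{|f*\eta_\delta(x)|}{1+\omega(\delta)}\Big)+\omega(\delta).
\]
Note the extra factor $(1+\omega(\delta))$ in the argument and the extra $+\omega(\delta)$ on the right; these are absorbed into a redefined modulus $\hat\omega$ at the end. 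Your proof is missing precisely this step. A secondary issue: your handling of the smallness condition $\omega(\delta)\le\tfrac1{|B_\delta|}$ is backwards, since $\tfrac1{|B_\delta|}\to 0$ as $\delta\to\infty$; the paper resolves this simply by setting $\hat\omega(\delta):=\infty$ whenever the condition fails, which makes the claimed inequality vacuous for large $\delta$.
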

\begin{proof}
We may assume that $\rho_\phi(f)<\infty$ since otherwise there is nothing to prove. 
Let $\omega$ be the modulus of continuity from \VA{} with $K:=m\rho_\phi(f)+2$
and let  $r>0$ be so small that $\omega(r)\le \frac1{|B_r|}$.  Thus 
Theorem~\ref{thm:jensen} yields
\[
\phi_{B_r}^-\bigg(\frac{|f*\eta_r(x)|}{1+\omega(r)} \bigg) 
\le
(\phi(\cdot, f)*\eta_r)(x) + \omega(r).
\]
This yields 
\[
\phi_{B_r}^-\bigg(\frac{|f*\eta_r(x)|}{1+\omega(r)} \bigg) 
\le \frac m{|B_r|} \int_{B_r\cap \Omega} \phi(x, f) \, dx  + \frac1{|B_r|} < \frac{K}{|B_r|}.
\]
Thus  we obtain by \VA{} that
\[
\phi\bigg(x, \frac{|f*\eta_r(x)|}{(1+\omega(r))^2} \bigg)
\le
\phi_{B_r}^-\bigg(\frac{|f*\eta_r(x)|}{1+\omega(r)} \bigg)  + \omega(r)
\le
(\phi(\cdot, f)*\eta_r)(x) + 2\omega(r).
\]
%
We integrate this over $\Omega$ and use Fubini's Theorem to conclude that
\[
\rho_\phi\big(\tfrac1{(1+\omega(r))^2} f*\eta_r\big)
\le
\int_\Omega \phi(x, f)*\eta_r \, dx + 2|\Omega| \omega(r)
\le 
\int_\Omega \phi(x, f) \, dx + 2|\Omega|\, \omega(r).
\]
This gives the claim with the modulus of continuity 
$\hat\omega(r):= \max\{2\omega(r)+\omega(r)^2, 2|\Omega|\, \omega(r)\}$; when 
$\omega(r)>\frac1{|B_r|}$ we set $\hat\omega(r):=\infty$.
\end{proof}

\subsection*{Associate spaces and conjugate modulars}

The associate space is a variant of the dual function space which works better at the end-points 
$p=1$ and $p=\infty$. 
We define the \textit{associate space $(L^\phi(\Omega))'\subset L^0(\Omega)$} by the norm
\[
\|u\|_{(L^\phi(\Omega))'}
:=
\sup_{\|v\|_\phi \le 1} \int_\Omega uv\, dx.
\]
According to \cite[Theorem~3.4.6]{HarH19}, $(L^\phi(\Omega))'= L^{\phi^*}(\Omega)$ for  
$\phi\in \Phiw(\Omega)$, where
\[
\phi^*(x, t) 
:=
\sup_{s\ge 0} (st - \phi(x, s)). 
\]
The conjugate function $\phi^*$ has the following properties:
\begin{itemize}
\item If $\phi \in \Phiw(\Omega)$, then $\phi^*\in \Phic(\Omega)$, so $\phi^*$ is always convex 
and left-continuous \cite[Lemma~2.4.1]{HarH19}.
\item For $p,q\in (1,\infty)$, $\phi$ satisfies \ainc{p} or \adec{q} if and only if $\phi^*$ 
satisfies \adec{p'} or \ainc{q'}, respectively \cite[Proposition~2.4.9]{HarH19}.
\item If $\phi \in \Phic(\Omega)$, then $\phi^*(x, \frac{\phi(x, t)}{t}) \le \phi(x, t)$ \cite[p.~35]{HarH19}
and $\phi^{**}=\phi$ \cite[Corollary~2.6.3]{DieHHR11}. 
\item If $\phi$ satisfies \azero{} or \aone{}, then so does $\phi^*$ \cite[Lemmas~3.7.6 and 4.1.7]{HarH19}. 
\end{itemize}
It is well-known that ``Young's equality''
\[
t\phi'(t) = \phi(t) + \phi^*(\phi'(t))
\]
holds when $\phi\in\Phic$ is continuously differentiable. In fact, we can prove it for any 
sub-gradient even without assuming convexity:

\begin{lem}\label{lem:dual-equality}
Let $\phi\in\Phiw$. 
If $\phi(s)\ge \phi(s_0)+k(s-s_0)$ for all $s\ge 0$, then $\phi^*(k)=ks_0-\phi(s_0)$.
\end{lem}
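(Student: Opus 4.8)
The plan is to prove the two inequalities $\phi^*(k)\le ks_0-\phi(s_0)$ and $\phi^*(k)\ge ks_0-\phi(s_0)$ separately, working directly from the definition $\phi^*(k)=\sup_{s\ge 0}(ks-\phi(s))$. For the lower bound, simply take $s=s_0$ in the supremum: this immediately gives $\phi^*(k)\ge ks_0-\phi(s_0)$, with no hypothesis needed. For the upper bound, I would use the subgradient assumption $\phi(s)\ge \phi(s_0)+k(s-s_0)$ for all $s\ge 0$. Rearranging this inequality gives $ks-\phi(s)\le ks_0-\phi(s_0)$ for every $s\ge 0$, and taking the supremum over $s\ge 0$ on the left yields $\phi^*(k)\le ks_0-\phi(s_0)$. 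Combining the two bounds gives the equality.

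One point that requires a small remark is the possibility that $\phi(s_0)=\infty$. If $\phi(s_0)=\infty$, then the subgradient inequality $\phi(s)\ge \phi(s_0)+k(s-s_0)=\infty$ for all $s\ge 0$ forces $\phi\equiv\infty$, so in particular $\phi(s)=\infty$ for $s$ near $0$, contradicting $\phi(0)=\lim_{t\to0^+}\phi(t)=0$ from the definition of $\Phiw$ (unless $s_0=0$, but then the inequality reads $\phi(s)\ge\phi(0)+ks=ks$, which is fine and finite-valued near zero only if we interpret $\infty+(\text{anything})$ carefully). So in practice the interesting case has $\phi(s_0)<\infty$ and the arithmetic $ks_0-\phi(s_0)$ is unambiguous; I would either add a one-line observation handling $\phi(s_0)=\infty$ or simply note that the identity is vacuous or trivial in that degenerate situation.

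I do not anticipate any genuine obstacle here — the statement is essentially the elementary fact that the Legendre–Fenchel conjugate evaluated at a subgradient slope is computed by the touching point. The only thing to be slightly careful about is that we are \emph{not} assuming $\phi$ is convex, so one cannot invoke the standard biconjugation machinery; but the argument above never uses convexity, only the pointwise subgradient inequality, so this is not an issue. The proof is two or three lines: state the lower bound by plugging in $s=s_0$, state the upper bound by rearranging the hypothesis and taking the supremum, and conclude.
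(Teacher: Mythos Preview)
Your proposal is correct and matches the paper's proof essentially line for line: the paper writes the chain $ks_0-\phi(s_0)\le\sup_{s\ge 0}(sk-\phi(s))\le\sup_{s\ge 0}(sk-(\phi(s_0)+k(s-s_0)))=ks_0-\phi(s_0)$, which is exactly your two-inequality argument. Your extra remark on the degenerate case $\phi(s_0)=\infty$ is more than the paper includes, so you may safely omit it or keep it as a brief parenthetical.
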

\begin{proof}
We observe that 
\[
ks_0-\phi(s_0)
\le
\sup_{s\ge 0}(sk-\phi(s))
\le 
\sup_{s\ge 0}(sk-(\phi(s_0)+k(s-s_0)))
=
ks_0-\phi(s_0).
\]
Therefore, $\phi^*(k)=\sup_{s\ge 0}(sk-\phi(s))=ks_0-\phi(s_0)$. 
\end{proof}

Every $\phi\in\Phic$ can be represented as 
\[
\phi(t) = \int_0^t \phi'(\tau)\, d\tau. 
\]
The function $\phi':\Omega\times[0,\infty)\to [0,\infty)$ can be the left-continuous 
left-derivative, the right-continuous right-derivative, or something in between. 
In any case, $\phi(s)\ge \phi(s_0)+\phi'(s_0)(s-s_0)$ and so the previous lemma 
implies that 
\[
\phi^*(\phi'(t))= t \phi'(t)-\phi(t).
\]
Additionally, it is known that $\phi(t)\approx t \phi'(t)$ if $\phi$ satisfies \adec{} \cite[Lemma 3.3]{HarHJ23}.

\subsection*{Functions of bounded variation}

A function $u \in L^1(\Omega)$ has \textit{bounded variation}, denoted $u \in \BV(\Omega)$, if
\[
V(u, \Omega) := \sup \bigg\{ \int_\Omega u \div w \, dx \, \Big|\, w\in C^1_0(\Omega;\Rn), 
|w|\le 1 \bigg\} < \infty.
\]
Such functions have weak first derivatives which are Radon measures which we denote $Du$. 
By \cite[Proposition~3.6]{AmbFP00}, $V(u,\Omega)$ equals the total variation $|Du|(\Omega)$
of the measure $Du$, defined as 
\[
|Du|(A):=\sup_{\cup A_i = A} \sum_i |Du(A_i)|
\] 
where the supremum is taken over finite partitions of $A$ by measurable sets $A_i$. 
Furthermore, we use the Lebesgue decomposition 
\begin{equation*}
Du = D^a u + D^s u,
\end{equation*}
where $D^a u$ is the absolutely continuous part of the derivative and $D^s u$ is the singular part. 
The density of $D^a u$ is the vector valued function $\nablaa u$ such that 
\[
\int_\Omega w \cdot d D^a u = \int_\Omega w \cdot \nablaa u \, dx
\]
for all $w \in C^\infty_0(\Omega; \Rn)$.
The space $\BV$ has the following compactness-type property  \cite[Proposition~3.13]{AmbFP00}:
if $\sup_i \big(\|u_i\|_{L^1(\Omega)} + |Du_i|(\Omega)\big)<\infty$, then 
there exists a subsequence and $u\in \BV(\Omega)$ such that 
\begin{equation*}
u_{i_j}\to u\text{ in } L^1(\Omega)
\quad\text{and}\quad
|Du|(\Omega)\le \liminf_{j \to \infty} |Du_{i_j}|(\Omega).
\end{equation*}
We refer to \cite{AmbFP00} for more information about $\BV$ spaces. 
The next lemma shows that the equality $V(u,\Omega)=|Du|(\Omega)$ holds separately 
for the singular part. 

\begin{lem}\label{lem:singularPart}
If $u\in \BV(\Omega)$, then 
$\displaystyle|D^su|(\Omega)=\sup\bigg\{ \int_\Omega w\cdot dD^su \,\Big|\, w \in C^1_0(\Omega; \Rn), |w|\le 1\bigg\}$.
\end{lem}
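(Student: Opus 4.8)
The plan is to prove the two inequalities separately. The inequality $\sup\{\int_\Omega w\cdot dD^su\} \le |D^su|(\Omega)$ is immediate: for any $w\in C^1_0(\Omega;\Rn)$ with $|w|\le 1$ we have $\int_\Omega w\cdot dD^su \le \int_\Omega |w|\, d|D^su| \le |D^su|(\Omega)$, using that $D^su$ is a finite vector measure with total variation $|D^su|$. So the substance is the reverse inequality, and the natural approach is to exploit the mutual singularity of $D^au$ and $D^su$. By definition of the Lebesgue decomposition there is a Borel set $S\subset\Omega$ with $|D^au|(S)=0$ and $|D^su|(\Omega\setminus S)=0$; equivalently $\nablaa u = 0$ $|D^su|$-a.e.\ and $D^su$ is concentrated on $S$, which is Lebesgue-null.

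For the reverse inequality I would first recall the vector-measure version of the Radon--Nikodym/polar decomposition: there is a $|D^su|$-measurable unit vector field $\sigma:\Omega\to\Rn$, $|\sigma|=1$ $|D^su|$-a.e., with $D^su = \sigma\, |D^su|$. Then $|D^su|(\Omega) = \int_\Omega \sigma\cdot dD^su$, so it suffices to approximate $\sigma$ in an appropriate sense by fields $w\in C^1_0(\Omega;\Rn)$ with $|w|\le 1$. Since $|D^su|$ is a finite Radon measure, by Lusin's theorem (applied componentwise) and the density of $C^1_0$ functions, for every $\epsilon>0$ there is $w\in C^1_0(\Omega;\Rn)$ with $|w|\le 1$ and $|D^su|(\{w\ne\sigma\})<\epsilon$; truncating to keep $|w|\le 1$ costs nothing because $|\sigma|\le 1$. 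This gives $\int_\Omega w\cdot dD^su \ge \int_\Omega \sigma\cdot dD^su - 2|D^su|(\{w\ne\sigma\}) \ge |D^su|(\Omega) - 2\epsilon$, and letting $\epsilon\to 0^+$ yields the claim.

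The main obstacle I anticipate is that the test fields in the statement of $V(u,\Omega)$ pair against $Du$, not against $D^su$, so one must verify that replacing the measure by its singular part is legitimate — i.e.\ that $\int_\Omega w\cdot dD^su$ really is the quantity being maximized and that the approximating $w$ can be chosen to ``see'' only $S$. Concretely, one wants the chosen $w$ to satisfy $\int_\Omega w\cdot dD^au$ negligible as well, but this is not actually needed for the stated lemma since the supremum is explicitly over $\int_\Omega w\cdot dD^su$; the only real work is the Lusin-type approximation of $\sigma$ in $L^1(|D^su|)$ by smooth compactly supported fields bounded by $1$, which is standard for finite Radon measures on an open set. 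An alternative, perhaps cleaner, route avoiding the polar decomposition is to note that $V(u,\Omega)=|Du|(\Omega)=|D^au|(\Omega)+|D^su|(\Omega)$ and that $|D^au|(\Omega)=\int_\Omega|\nablaa u|\,dx = \sup\{\int_\Omega w\cdot\nablaa u\,dx\}$ over the same class of $w$; then one would need to show the supremum defining $V$ ``splits'' along the decomposition, which again reduces to choosing $w$ adapted to the $\mu$-a.e.\ disjoint supports. I would present the polar-decomposition/Lusin argument as the primary proof since it is the most direct.
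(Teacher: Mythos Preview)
Your argument is correct, but it takes a different route from the paper. The paper's proof is a short squeeze argument: it uses the already-known identity $|Du|(\Omega)=V(u,\Omega)$, splits the integral along $Du=D^au+D^su$, and observes
\[
|Du|(\Omega)=\sup_{w\in C^1_0,\ |w|\le 1}\Big(\int_\Omega w\cdot dD^au+\int_\Omega w\cdot dD^su\Big)
\le |D^au|(\Omega)+|D^su|(\Omega)=|Du|(\Omega),
\]
so every intermediate inequality (in particular $\sup_{C^1_0}\int w\cdot dD^su\le |D^su|(\Omega)$) is an equality. No approximation of the polar density is needed. Your approach instead proves the statement directly for the measure $D^su$ via polar decomposition and a Lusin/density argument; this is more constructive and in fact establishes the general Riesz-type fact that the $C^1_0$ supremum computes the total variation of \emph{any} finite $\Rn$-valued Radon measure, whereas the paper's argument is specific to the pair $(D^au,D^su)$ and leans on the classical $\BV$ result. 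One small technical point in your write-up: you cannot quite get $w\in C^1_0$ with $|D^su|(\{w\ne\sigma\})<\epsilon$, since mollifying a continuous Lusin approximant destroys pointwise equality; what you actually need (and what you correctly note at the end) is $L^1(|D^su|)$-approximation of $\sigma$ by $C^1_0$ fields with $|w|\le 1$, obtained by first retracting a $C_c$ approximant onto the closed unit ball (a $1$-Lipschitz map, hence contracting the $L^1$-distance to $\sigma$) and then mollifying, which preserves the bound $|w|\le 1$. Amusingly, the ``alternative, perhaps cleaner, route'' you sketch at the end is essentially the paper's proof.
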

\begin{proof}
By $|Du|(\Omega)=V(u,\Omega)$, the definition of the weak derivative and $Du=D^au+D^su$, we see that
\begin{align*}
|Du|(\Omega)
&=
\sup_{w \in C^1_0(\Omega; \Rn), |w|\le 1}\bigg( \int_\Omega w\cdot dD^au + \int_\Omega w\cdot dD^su \bigg) \\
&\le 
\sup_{w \in L^\infty(\Omega; \Rn), |w|\le 1}\int_\Omega w\cdot dD^au + \sup_{w \in L^\infty(\Omega; \Rn), |w|\le 1}\int_\Omega w\cdot dD^su \\
&= 
|D^au|(\Omega) + |D^su|(\Omega). 
\end{align*}
Since $|D^au|(\Omega) + |D^su|(\Omega) = |Du|(\Omega)$ as $D^a$ and $D^s$ are mutually singular, 
each inequality has to be an equality, and so the claim follows. 
\end{proof}

\section{Basic properties of dual norms and modulars}
\label{sect:basic}

In this section we use a duality approach to define a norm and a modular. The 
``dual norm'' $V_\phi$ is related to the associate space and H\"older's inequality, whereas 
the ``dual modular'' $\rho_{V,\phi}$ is related to Young's inequality. 
Note that $V_\phi$ is not the norm generated by $\rho_{V,\phi}$; their 
relationship is explored in Lemma~\ref{lem:equivalence}.

\begin{defn}\label{defn:V-rho}
Let $\phi \in \Phiw(\Omega)$. For $u\in L^1(\Omega)$, we define the ``dual norm''
\[
V_\phi(u, \Omega):=
V_\phi(u):=\sup\bigg\{ \int_\Omega u \div w \, dx \,\Big|\, w \in C^1_0(\Omega; \Rn), \| w \|_{\phi^*}\le 1 \bigg\}
\]
and the ``dual modular''
\[
\rho_{V,\phi}(u):=\sup\bigg\{ \int_\Omega u \div w - \phi^*(x, |w|)\, dx \,\Big|\, w \in C^1_0(\Omega; \Rn)\bigg\}.
\]
We say that $u\in L^\phi(\Omega)$ belongs to $\BV^\phi(\Omega)$ if
\[
 \|u\|_{\BV^\phi}:= \|u\|_\phi + V_\phi(u) < \infty.
\]
\end{defn}

The next example shows that this definition is an extension of the ordinary $\BV$-space, in which the norm and 
modular coincide. Example~\ref{eg:weightNeeded} shows that interesting things can happen in the non-autonomous 
case, which do not appear at all when $\phi$ is independent of $x$. 

\begin{eg}
Let $\phi(x,t):= t$ and consider the corresponding functions $V_1$ and $\rho_{V,1}$. 
Then $\phi^*(x,t)=\infty \chi_{(1,\infty)}(t)$ so that $\rho_{\phi^*}(w)<\infty$ 
if and only if $w\le 1$ almost everywhere, in which case $\rho_{\phi^*}(w)=0$. Hence  
$V_1(u) = \rho_{V,1}(u) = |Du|(\Omega) =V(u,\Omega)$. 
\end{eg}

\begin{eg}\label{eg:weightNeeded}
Let $\phi(x,t):= \frac1{p(x)}t^{p(x)}$ for $p:\R\to[1,\infty)$. 
Then $\phi^*(x,t)=\frac1{p'(x)}t^{p'(x)}$ when $p(x)>1$ and 
$\phi^*(x,t)=\infty \chi_{(1,\infty)}(t)$ when $p(x)=1$. Consider the Heaviside function 
$h=\chi_{(0,\infty)}$ so that $Dh=\delta_{\{0\}}$, the Dirac measure. Now 
\[
\rho_{V,\phi}(h)
= 
\sup\big\{ w(0) - \rho_{\phi^*}(|w|) \,|\, w \in C^1_0(\Omega)\big\}.
\]
Since $w$ is continuous, there exists for every $\epsilon\in (0,w(0))$ a number $\delta>0$ such that 
\[
\rho_{\phi^*}(w) \ge \int_{-\delta}^\delta \frac1{p'(x)}(w(0)-\epsilon)^{p'(x)}.
\]
Suppose first that $p(x):=1+\frac {c_{\log}}{\log(1/|x|)}$ for small $|x|$. Then 
$p'(x)=\frac {\log(1/|x|)}{c_{\log}} + 1$ and 
\[
(w(0)-\epsilon)^{p'(x)} = |x|^{-\frac{\log(w(0)-\epsilon)}{c_{\log}}} (w(0)-\epsilon).
\]
Hence the previous integral converges if $\log w(0)<c_{\log}$ and diverges if $\log w(0)>c_{\log}$ 
(when $\epsilon\to 0^+$). 
On the other hand, we can choose $w$ such that $0\le w\le w(0)\chi_{[-\delta, \delta]}$. From this 
we see that $\inf_w \rho_{\phi^*}(w) = 0$ when $\log w(0)<c_{\log}$. It follows that 
\[
\rho_{V,\phi}(h)
= 
e^{c_{\log}}.
\]
In the same way we can show that $\rho_{V,\phi}(h)=1$ if $p(x):=1+|x|^\alpha$ for some $\alpha>0$. 
This example shows that $\rho_{V,\phi}(h)$ depends on the behavior of the exponent in 
a neighborhood of $0$, even though the support of the derivative is only $\{0\}$. 
\end{eg}

\begin{rem}\label{rem:restrictedTestFunction}
If $\rho_{\phi^*}(|w|)=\infty$, then 
$\int_\Omega u \div w - \phi^*(x, |w|)\, dx = - \infty$
since $\int_\Omega u \div w \,dx $ is finite as $u\in L^1(\Omega)$ and $w \in C^1_0(\Omega; \Rn)$.
Testing with $w\equiv 0$, we see that the supremum in $\rho_{V,\phi}$ is always non-negative. 
Therefore test-functions $w$ with $\rho_{\phi^*}(|w|)=\infty$ can be omitted and we obtain 
the alternative, equivalent formulation 
\[
\rho_{V,\phi}(u)
=
\sup\bigg\{ \int_\Omega u \div w - \phi^*(x, |w|)\, dx \,\Big|\, w \in C^1_0(\Omega; \Rn), \rho_{\phi^*}(|w|)<\infty 
\bigg\}.
\]
Note that $\rho_{\phi^*}(|w|)<\infty$ does not follow from $w \in C^1_0(\Omega; \Rn)$ as 
$\phi^*$ does not satisfy \adec{}.
\end{rem}

\begin{rem}
In our definition we use test-functions from $C^1_0(\Omega;\Rn)$. This corresponds 
to the definition of the usual $\BV$-space. An alternative in duality formulations (e.g.\ \cite{AmeGZ14, CheLR06}) 
is $C^1(\Omega;\Rn)$, which means that the boundary values of the function $u$ also influence the norm and 
modular. The restriction $|w|\le 1$ carries over nicely to the boundary and leads to a boundary term in 
$L^1(\partial \Omega)$. This is not the case with $\rho_{\phi^*}(|w|)<\infty$. It seems that 
an additional boundary term for $w$ of fractional Sobolev space-type 
is needed in $\rho_{V,\phi}$ if we want to obtain appropriate 
boundary values in the generalized Orlicz case. This remains a problem for future research. 
\end{rem}

Let $w \in C^1_0(\Omega; \Rn)$ with $|w|\le 1$. Since $\Omega$ is bounded and \azero{} 
for $\phi$ implies \azero{} for $\phi^*$, we find that $w \in L^{\phi^*}(\Omega)$ and 
$\|w\|_{\phi^*} \le c\, \|w\|_\infty \le c$, see Corollary 3.7.10 in \cite{HarH19}. 
By the definition of $V_\phi$, 
\[
\int_\Omega u \div w \, dx =  \|w\|_{\phi^*} \int_\Omega u \div \tfrac w{ \|w\|_{\phi^*}} \, dx 
\le \|w\|_{\phi^*} V_\phi(u)
\]
and taking supremum over all such $w$, we obtain that $V(u,\Omega)\le c V_\phi(u)$.
Since $\Omega$ is bounded and $\phi$ satisfies \azero{}, we have $L^\phi(\Omega) \hookrightarrow L^1(\Omega)$ by 
\cite[Corollary 3.7.9]{HarH19}. Thus $\BV^\phi(\Omega) \hookrightarrow \BV(\Omega)$ provided $\phi$ satisfies \azero{}.


\begin{lem}
If $\phi \in \Phiw(\Omega)$, then $V_\phi$ is a seminorm 
and $\|\cdot\|_{\BV^\phi}$ is a quasinorm in $\BV^\phi(\Omega)$. 
Moreover, if $\phi \in \Phic(\Omega)$, then $\|\cdot\|_{\BV^\phi}$ is a norm.
\end{lem}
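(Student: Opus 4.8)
The plan is to verify the quasinorm/norm axioms (N1)–(N3) or (N3$'$) for $\|\cdot\|_{\BV^\phi} = \|\cdot\|_\phi + V_\phi$, leaning on the facts that $\|\cdot\|_\phi$ is already known to be a quasinorm (norm when $\phi\in\Phic$) by \cite[Lemma~3.2.2]{HarH19}, and that a sum of a quasinorm and a seminorm is a quasinorm (a sum of a norm and a seminorm is a norm). So the real work is the claim that $V_\phi$ is a seminorm, i.e.\ that it satisfies (N2) and (N3). Both will follow quickly from the defining formula
\[
V_\phi(u) = \sup\Big\{ \textstyle\int_\Omega u\,\div w\,dx \;\Big|\; w\in C^1_0(\Omega;\Rn),\ \|w\|_{\phi^*}\le 1\Big\},
\]
which exhibits $V_\phi$ as a supremum of linear functionals of $u$ over a fixed, symmetric test set.

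First I would check absolute homogeneity (N2). For $a\in\R$ and $u\in L^1(\Omega)$, we have $\int_\Omega (au)\div w\,dx = a\int_\Omega u\,\div w\,dx$. When $a\ne 0$, replacing $w$ by $\operatorname{sgn}(a)\,w$ runs over the same admissible set (since $\|{-w}\|_{\phi^*}=\|w\|_{\phi^*}$), so $V_\phi(au)=|a|\,V_\phi(u)$; and $V_\phi(0)=0$ is immediate because each integrand vanishes. Next, subadditivity (N3): for $u_1,u_2\in L^1(\Omega)$ and any admissible $w$,
\[
\int_\Omega (u_1+u_2)\div w\,dx = \int_\Omega u_1\div w\,dx + \int_\Omega u_2\div w\,dx \le V_\phi(u_1)+V_\phi(u_2),
\]
and taking the supremum over $w$ gives $V_\phi(u_1+u_2)\le V_\phi(u_1)+V_\phi(u_2)$. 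This uses only that $\|\cdot\|_{\phi^*}$ is (at least) a quasinorm so that $\{w : \|w\|_{\phi^*}\le 1\}$ is a genuine, $a$-priori nonempty set of test functions; nonemptiness is clear since $w\equiv 0$ qualifies, and the set is rich enough by the discussion preceding the lemma (bounded $\Omega$ plus \azero{} giving $C^1_0\subset L^{\phi^*}$). Thus $V_\phi$ is a seminorm. Note $V_\phi$ need not satisfy (N1): constants have $V_\phi = 0$, which is exactly why we add the $\|\cdot\|_\phi$ term.

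Finally I would assemble the pieces. Since $\|\cdot\|_\phi$ is a quasinorm on $L^\phi(\Omega)$ — there exists $C\ge 1$ with $\|u_1+u_2\|_\phi\le C(\|u_1\|_\phi+\|u_2\|_\phi)$ and $\|au\|_\phi=|a|\|u\|_\phi$ — and $V_\phi$ is a seminorm, the sum $\|\cdot\|_{\BV^\phi}=\|\cdot\|_\phi+V_\phi$ satisfies (N2) (sum of homogeneous functionals) and (N3$'$) with the same constant $C$:
\[
\|u_1+u_2\|_{\BV^\phi}\le C\|u_1\|_\phi+C\|u_2\|_\phi + V_\phi(u_1)+V_\phi(u_2)\le C\,\big(\|u_1\|_{\BV^\phi}+\|u_2\|_{\BV^\phi}\big).
\]
Moreover $\|u\|_{\BV^\phi}=0$ forces $\|u\|_\phi=0$, hence $u=0$ by (N1) for $\|\cdot\|_\phi$, giving (N1) for $\|\cdot\|_{\BV^\phi}$; so it is a quasinorm on $\BV^\phi(\Omega)$. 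When $\phi\in\Phic(\Omega)$, $\|\cdot\|_\phi$ is a genuine norm (triangle inequality with $C=1$) by the same \cite[Lemma~3.2.2]{HarH19}, so the displayed estimate holds with $C=1$, i.e.\ (N3) holds, and $\|\cdot\|_{\BV^\phi}$ is a norm. The only mild subtlety — and the closest thing to an obstacle — is making sure the admissible test-function set for $V_\phi$ is nonempty and that $V_\phi(u)$ is well-defined (possibly $+\infty$) for all $u\in L^1(\Omega)$; this is handled by the remarks already in the text ($\Omega$ bounded, \azero{} for $\phi$ implies \azero{} for $\phi^*$, hence $C^1_0(\Omega;\Rn)\subset L^{\phi^*}(\Omega)$ by \cite[Corollary~3.7.10]{HarH19}), so no new argument is required.
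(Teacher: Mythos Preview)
Your proof is correct and follows essentially the same approach as the paper: verify homogeneity and the triangle inequality for $V_\phi$ directly from its definition as a supremum of linear functionals over a symmetric test set, then combine with the known (quasi)norm properties of $\|\cdot\|_\phi$ from \cite[Lemma~3.2.2]{HarH19}. One minor remark: your aside about \azero{} and richness of the test set is not needed here, since the lemma does not assume \azero{} and the seminorm axioms only require the set to be nonempty (which $w\equiv 0$ already guarantees).
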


\begin{proof}
The homogeneity property $V_\phi(au)= |a|V_\phi(u)$ is clear.
Let us show that $V_\phi$ satisfies the triangle inequality. 
If $u, v \in L^1(\Omega)$, then
\[
\int_\Omega (u + v) \div w \, dx
= 
\int_\Omega u \div w \, dx + \int_\Omega v \div w \, dx \le V_\phi(u) + V_\phi(v)
\]
for $w \in C^1_0(\Omega; \Rn)$ with $\|w\|_{\phi^*}\le 1$. 
By taking the supremum over such $w$ we have 
\[
V_\phi(u+v) \le V_\phi(u) + V_\phi(v). 
\]
Note that $\|\cdot \|_\phi$ is a quasinorm if $\phi \in \Phiw(\Omega)$, and a norm if $\phi \in \Phic(\Omega)$.
Combining these two results, we obtain 
the (quasi)triangle inequality for the sum that is ${\|\cdot\|}_{\BV^\phi(\Omega)}$.
These properties also imply that $\BV^\phi(\Omega)$ is a vector space. 
\end{proof}

From the next lemma it follows that the sum $\rho_\phi + \rho_{V,\phi}$ is a quasi-semimodular, and a 
quasimodular if $\phi$ satisfies \adec{}. Note that the convexity of $\phi$ is  not required.

\begin{lem}\label{lem:pseudo-modular}
If $\phi\in\Phiw(\Omega)$, then $\rho_{V,\phi}$ is a left-continuous semimodular in $L^1(\Omega)$. 
\end{lem}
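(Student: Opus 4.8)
The plan is to verify the four defining properties of a semimodular (Definition~\ref{def:quasiConvexsemimodular} with $\beta=1$) for $\rho_{V,\phi}$ on $X=L^1(\Omega)$, plus the left-continuity, working directly from the definition of $\rho_{V,\phi}$ as a supremum of affine functionals of $u$. The key structural observation, which I would state first, is that for each fixed admissible test function $w\in C^1_0(\Omega;\Rn)$ the map $u\mapsto \ell_w(u):=\int_\Omega u\,\div w - \phi^*(x,|w|)\,dx$ is \emph{affine} in $u$ (with the constant term $-\rho_{\phi^*}(|w|)\in[-\infty,0]$, finite on the test functions that matter by Remark~\ref{rem:restrictedTestFunction}), and $\rho_{V,\phi}=\sup_w \ell_w$. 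A pointwise supremum of affine functions is convex and lower semicontinuous; this single fact will drive most of the proof.

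First, $\rho_{V,\phi}(0)=0$: testing with $w\equiv 0$ gives $\ge 0$, and for any $w$ with $\rho_{\phi^*}(|w|)<\infty$ we have $\ell_w(0)=-\rho_{\phi^*}(|w|)\le 0$, so the supremum is exactly $0$. Second, monotonicity of $\lambda\mapsto\rho_{V,\phi}(\lambda u)$ on $[0,\infty)$: for $0\le\lambda\le\mu$ and a test function $w$, write $\int\lambda u\,\div w = \frac{\lambda}{\mu}\int \mu u\,\div w$ when $\mu>0$; since $-\rho_{\phi^*}(|w|)\le 0$ we get $\ell_w(\lambda u)\le \frac{\lambda}{\mu}\int \mu u\,\div w - \rho_{\phi^*}(|w|)\le \frac{\lambda}{\mu}\big(\int\mu u\,\div w - \rho_{\phi^*}(|w|)\big) + (1-\tfrac{\lambda}{\mu})\cdot 0\le \max\{\ell_w(\mu u),0\}\le\rho_{V,\phi}(\mu u)$, using that $\rho_{V,\phi}(\mu u)\ge 0$; taking the supremum over $w$ gives the claim (the case $\mu=0$ is trivial). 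Third, symmetry $\rho_{V,\phi}(-u)=\rho_{V,\phi}(u)$: replace $w$ by $-w$ in the supremum, noting $\phi^*(x,|-w|)=\phi^*(x,|w|)$, so the admissible set and the values are unchanged. Fourth, convexity ($\beta=1$): for $u,v\in L^1(\Omega)$, $\alpha\in[0,1]$ and any fixed $w$, affinity gives $\ell_w(\alpha u+(1-\alpha)v)=\alpha\ell_w(u)+(1-\alpha)\ell_w(v)\le \alpha\rho_{V,\phi}(u)+(1-\alpha)\rho_{V,\phi}(v)$; take the supremum over $w$.

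For left-continuity I need $\rho_{V,\phi}(\lambda u)\to\rho_{V,\phi}(u)$ as $\lambda\to 1^-$ for fixed $u$. The inequality $\limsup_{\lambda\to1^-}\rho_{V,\phi}(\lambda u)\le\rho_{V,\phi}(u)$ follows from monotonicity (step two). For the reverse, fix any test function $w$ with $\rho_{\phi^*}(|w|)<\infty$; since $\int_\Omega u\,\div w\,dx$ is a finite number, $\lambda\mapsto\ell_w(\lambda u)=\lambda\int_\Omega u\,\div w - \rho_{\phi^*}(|w|)$ is continuous in $\lambda$, so $\ell_w(\lambda u)\to\ell_w(u)$ as $\lambda\to1^-$; hence $\liminf_{\lambda\to1^-}\rho_{V,\phi}(\lambda u)\ge\liminf_{\lambda\to1^-}\ell_w(\lambda u)=\ell_w(u)$, and taking the supremum over all such $w$ (legitimate by Remark~\ref{rem:restrictedTestFunction}) gives $\liminf_{\lambda\to1^-}\rho_{V,\phi}(\lambda u)\ge\rho_{V,\phi}(u)$.

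The main obstacle — really the only point requiring care — is bookkeeping with the value $+\infty$: several $\ell_w$ take the value $-\infty$ (when $\rho_{\phi^*}(|w|)=\infty$) and $\rho_{V,\phi}(u)$ itself may be $+\infty$, so I must consistently use the reduction in Remark~\ref{rem:restrictedTestFunction} to restrict to test functions with $\rho_{\phi^*}(|w|)<\infty$, and be careful that inequalities like $\alpha\rho_{V,\phi}(u)+(1-\alpha)\rho_{V,\phi}(v)$ are interpreted in $[0,\infty]$ (they are, since $\rho_{V,\phi}\ge 0$, so no $\infty-\infty$ arises). With that convention fixed at the outset, each of the five verifications is a one- or two-line computation as sketched above, and no convexity or continuity of $\phi$ is used anywhere — only that $\phi^*(x,|w|)$ is well-defined and that $u\in L^1(\Omega)$, $w\in C^1_0(\Omega;\Rn)$ makes $\int_\Omega u\,\div w\,dx$ finite.
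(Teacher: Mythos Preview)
Your proposal is correct and follows essentially the same route as the paper: both verify the semimodular axioms and left-continuity directly from the definition of $\rho_{V,\phi}$ as a supremum over test functions, using Remark~\ref{rem:restrictedTestFunction} to restrict to $w$ with $\rho_{\phi^*}(|w|)<\infty$. The only notable difference is in the monotonicity step: the paper substitutes $\lambda w$ for $w$ and uses that $\phi^*$ is increasing, whereas you use a convex-combination estimate based on $\rho_{\phi^*}(|w|)\ge 0$ and $\rho_{V,\phi}\ge 0$; both arguments are equally short, and your ``sup of affine functionals'' framing makes the convexity and left-continuity steps slightly cleaner (in particular, you avoid the paper's case split $\rho_{V,\phi}(u)<\infty$ versus $=\infty$).
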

\begin{proof}
Since $w=0$ is a possible test function, we see that $\rho_{V,\phi}\ge 0$. 
If $u=0$ a.e., then the integrand in $\rho_{V,\phi}$ is the non-positive function $-\phi^*(x, |w|)$,
so that $\rho_{V,\phi}(0)\le 0$. Thus $\rho_{V,\phi}(0)=0$ 
and property (1) from the definition of semimodular holds.
If $w$ is a test function, then so is $-w$ 
and hence $\rho_{V,\phi}(-u)=\rho_{V,\phi}(u)$. Thus property (3) holds, as well.

To show that $\lambda \mapsto \rho(\lambda x)$ is increasing we
let $\lambda \in (0, 1)$ and $w \in C^1_0(\Omega; \Rn)$.
Since $\phi^*$ is increasing, 
\[
\int_\Omega \lambda u \div w - \phi^*(x, |w|)\, dx 
\le \int_\Omega u \div(\lambda w) - \phi^*(x, |\lambda w|)\, dx 
\le \rho_{V,\phi}(u),
\]
as $\lambda w \in C^1_0(\Omega; \Rn)$. Taking the 
supremum over $w$, we get $\rho_{V,\phi}(\lambda u) \le \rho_{V,\phi}(u)$.

Let us prove that $\rho_{V,\phi}$ is convex. 
Let $u, v \in L^1(\Omega)$, $\theta \in (0, 1)$ and $w\in C^1_0(\Omega; \Rn)$. Then
\[
\begin{split}
&\int_\Omega (\theta u + (1-\theta) v) \div w - \phi^*(x, |w|)\, dx\\
& \qquad = \theta\int_\Omega u \div w - \phi^*(x, |w|)\, dx
+(1-\theta)\int_\Omega v \div w - \phi^*(x, |w|)\, dx\\
&\qquad \le \theta \rho_{V,\phi}(u) + (1-\theta) \rho_{V,\phi}( v).
\end{split}
\] 
The claim follows when we take the supremum over $w\in C^1_0(\Omega; \Rn)$. 

Finally, we show that $\rho$ is left-continuous. 
Since $\lambda\mapsto \rho_{V,\phi}(\lambda u)$ is increasing, 
$\rho_{V,\phi}(\lambda u) \le \rho_{V,\phi}(u)$ for $\lambda \in(0, 1)$.
We next consider the opposite inequality at the limit.
Let first $\rho_{V,\phi}(u)<\infty$ and fix $\epsilon>0$. 
By the definition of $\rho_{V,\phi}$ and Remark~\ref{rem:restrictedTestFunction} there exists a test function 
$w\in C^1_0(\Omega; \Rn)$ with $\rho_{\phi^*}(|w|)<\infty$ such that
\[
\int_\Omega u \div w \, dx 
\ge 
\rho_{V,\phi}(u) - \epsilon + \rho_{\phi^*}(|w|). 
\]
Multiplying the inequality by $\lambda\in (0,1)$ and subtracting $\rho_{\phi^*}(|w|)$, 
we obtain that 
\[
\rho_{V,\phi}(\lambda u) 
\ge 
\lambda (\rho_{V,\phi}(u) - \epsilon) + (\lambda-1)\rho_{\phi^*}(|w|). 
\]
Hence 
\[
\lim_{\lambda\to 1^-}\rho_{V,\phi}(\lambda u) 
\ge 
\rho_{V,\phi}(u) - \epsilon. 
\]
The claim follows from this as $\epsilon\to 0^+$. The case $\rho_{V,\phi}(u)=\infty$ is proved similarly, 
we only need to replace $\rho_{V,\phi}(u) - \epsilon$ by $\frac1\epsilon$. 
\end{proof}

From the previous lemma it follows that $\rho_{V,\phi}$ defines a seminorm by the 
Luxenburg method (Definition~\ref{def:modularSpace}); for a proof see \cite{HarHJ_pp}.
We next show that this seminorm is comparable to $V_\phi$. 

\begin{lem}\label{lem:equivalence}
If $\phi\in\Phiw(\Omega)$ and $u\in \BV^\phi(\Omega)$, then
\[
\| u\|_{\rho_{V,\phi}} \le V_\phi(u) \le 2 \| u\|_{\rho_{V,\phi}}. 
\]
\end{lem}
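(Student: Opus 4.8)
The two inequalities are the exact dual-setting analogues of the classical comparison between an Orlicz-type norm and the associated Luxemburg norm, and I would prove them straight from the two defining suprema. The only external ingredients are two standard relations between the semimodular $\rho_{\phi^*}$ (recall $\phi^*\in\Phic(\Omega)$, so $\rho_{\phi^*}$ is convex and left-continuous) and its Luxemburg norm $\|\cdot\|_{\phi^*}$: the unit ball property $\|w\|_{\phi^*}\le 1\Rightarrow\rho_{\phi^*}(|w|)\le 1$, and the estimate $\|w\|_{\phi^*}\le 1+\rho_{\phi^*}(|w|)$ (indeed $\|w\|_{\phi^*}\le\max\{1,\rho_{\phi^*}(|w|)\}$); both are standard, see \cite{HarH19}. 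I also use that, by Lemma~\ref{lem:pseudo-modular}, $t\mapsto\rho_{V,\phi}(tu)$ is increasing, so that (by definition of $\|\cdot\|_{\rho_{V,\phi}}$ as an infimum) $\rho_{V,\phi}(u/\mu)\le 1$ for every $\mu>\|u\|_{\rho_{V,\phi}}$.

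For the upper bound $V_\phi(u)\le 2\|u\|_{\rho_{V,\phi}}$ I may assume $\|u\|_{\rho_{V,\phi}}<\infty$. Fix $\mu>\|u\|_{\rho_{V,\phi}}$, so $\rho_{V,\phi}(u/\mu)\le 1$, and let $w\in C^1_0(\Omega;\Rn)$ with $\|w\|_{\phi^*}\le 1$. Then $\rho_{\phi^*}(|w|)\le 1$ by the unit ball property and $w$ is an admissible test function for $\rho_{V,\phi}$, so
\[
\frac{1}{\mu}\int_\Omega u\div w\,dx - 1 \le \int_\Omega \frac{u}{\mu}\div w - \phi^*(x,|w|)\,dx \le \rho_{V,\phi}(u/\mu) \le 1 ,
\]
hence $\int_\Omega u\div w\,dx\le 2\mu$. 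Taking the supremum over all such $w$ gives $V_\phi(u)\le 2\mu$, and letting $\mu\to\|u\|_{\rho_{V,\phi}}$ from above proves the bound.

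For the lower bound $\|u\|_{\rho_{V,\phi}}\le V_\phi(u)$ I may assume $V_\phi(u)<\infty$. Fix $\lambda>V_\phi(u)$; it suffices to show $\rho_{V,\phi}(u/\lambda)\le 1$, since this gives $\|u\|_{\rho_{V,\phi}}\le\lambda$ and the claim follows on letting $\lambda\to V_\phi(u)$ from above. By Remark~\ref{rem:restrictedTestFunction} it is enough to bound $\int_\Omega\frac{u}{\lambda}\div w - \phi^*(x,|w|)\,dx$ for $w\in C^1_0(\Omega;\Rn)$ with $\rho_{\phi^*}(|w|)<\infty$; for such $w$ we have $c:=\|w\|_{\phi^*}\le 1+\rho_{\phi^*}(|w|)<\infty$. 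If $c=0$ the integrand vanishes. Otherwise $\|w/c\|_{\phi^*}=1$, so the definition of $V_\phi$ yields $\int_\Omega u\div w\,dx = c\int_\Omega u\div(w/c)\,dx \le c\,V_\phi(u) < c\lambda$, and therefore
\[
\int_\Omega \frac{u}{\lambda}\div w - \phi^*(x,|w|)\,dx = \frac{1}{\lambda}\int_\Omega u\div w\,dx - \rho_{\phi^*}(|w|) < c - \rho_{\phi^*}(|w|) \le 1 ,
\]
where the last step uses $c\le 1+\rho_{\phi^*}(|w|)$. Hence $\rho_{V,\phi}(u/\lambda)\le 1$, as desired.

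The computation is largely bookkeeping; the only point that must be respected is that $\phi^*$ is in general non-doubling (it may even be infinite), so one cannot rescale freely inside $\rho_{\phi^*}$. This is precisely why I lean on the two sharp semimodular/norm inequalities above rather than on any doubling-type estimate, and why Remark~\ref{rem:restrictedTestFunction} is used to discard test functions with $\rho_{\phi^*}(|w|)=\infty$ before estimating.
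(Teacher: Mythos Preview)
Your proof is correct and follows essentially the same route as the paper's: both arguments rest on the two standard modular--norm relations $\|w\|_{\phi^*}\le 1\Rightarrow\rho_{\phi^*}(|w|)\le 1$ and $\|w\|_{\phi^*}\le 1+\rho_{\phi^*}(|w|)$, applied to the two suprema defining $V_\phi$ and $\rho_{V,\phi}$. The only cosmetic difference is that the paper first normalizes to $V_\phi(u)=1$ by homogeneity and then shows $\rho_{V,\phi}(u)\le 1$ and $\rho_{V,\phi}(2u/\lambda)>1$ for $\lambda<1$, whereas you work directly with arbitrary $\lambda>V_\phi(u)$ and $\mu>\|u\|_{\rho_{V,\phi}}$ and pass to the limit.
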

\begin{proof}
If $V_\phi(u)=0$, then $\rho_{V,\phi}(\frac u\lambda)=0$ for every $\lambda>0$ so that $\| u\|_{\rho_{V,\phi}} = 0$. The case $V_\phi(u)=\infty$ is excluded by the assumption $u\in \BV^\phi(\Omega)$.
Since the claim is homogeneous, the case $V_\phi(u)\in (0,\infty)$ reduces to $V_\phi(u)=1$. 
By the definition of $V_\phi$, it then follows that 
\[
\int_\Omega u \div w \, dx
\le 
V_\phi(u) \|w\|_{\phi^*}
= 
\|w\|_{\phi^*}
\le 
1+\rho_{\phi^*}(|w|);
\]
the last step is a general property of the Luxemburg norm, see \cite[Corollary~2.1.15]{DieHHR11}. Thus 
\[
\rho_{V,\phi}(u) 
\le 
\sup_{w \in C^1_0(\Omega; \Rn), \rho_{\phi^*}(|w|)<\infty}\big(1+\rho_{\phi^*}(|w|) - \rho_{\phi^*}(|w|) \big)
= 1,
\]
and so $\|u\|_{\rho_{V,\phi}}\le 1$. This concludes the proof of the first inequality. 

We next establish the opposite inequality $2\|u\|_{\rho_{V,\phi}} \ge 1$, which is equivalent to 
$\rho_{V,\phi}(\frac{2u}\lambda)\ge 1$ for every $\lambda<1$ 
. Since $\rho_{\phi^*}(|w|)\le 1$ when $\|w\|_{\phi^*}\le 1$, 
we conclude that
\begin{align*}
\rho_{V,\phi}(\tfrac{2u}\lambda)
&\ge
\sup\bigg\{ \int_\Omega \tfrac{2u}\lambda \div w - \phi^*(x, |w|)\, dx \,\Big|\, w \in C^1_0(\Omega; \Rn), \|w\|_{\phi^*}\le 
1\bigg\}\\
&\ge
\tfrac 2\lambda V_\phi(u) 
- 1 > 1. \qedhere
\end{align*}
\end{proof}

The following result is the counterpart of Theorem 5.2 in \cite{EvaG92}, see also Theorem 1.9 in \cite{Giusti}.

\begin{lem}[Weak lower semicontinuity]\label{lem:sequence-in-BV}
Let $\phi \in \Phiw(\Omega)$, $u, u_k \in L^1(\Omega)$ with
$u_k \rightharpoonup u$ in $L^1(\Omega)$. Then 
\[
V_\phi(u) \le \liminf_{k\to \infty} V_\phi(u_k)
\qquad\text{and}\qquad
\rho_{V,\phi}(u) \le \liminf_{k\to \infty} \rho_{V,\phi}(u_k).
\]
\end{lem}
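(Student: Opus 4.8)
The plan is to exploit the fact that both $V_\phi$ and $\rho_{V,\phi}$ are defined as suprema of functionals that are \emph{continuous} with respect to weak $L^1$-convergence for each fixed test function $w$, and that a supremum of lower semicontinuous functions is lower semicontinuous. First I would fix a test function $w\in C^1_0(\Omega;\Rn)$. The functional
\[
u\mapsto \int_\Omega u\,\div w\, dx
\]
is a bounded linear functional on $L^1(\Omega)$ (since $\div w\in C_0(\Omega)\subset L^\infty(\Omega)$), hence it is weakly continuous: $u_k\rightharpoonup u$ in $L^1(\Omega)$ implies $\int_\Omega u_k\,\div w\, dx\to \int_\Omega u\,\div w\, dx$.

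For the $V_\phi$ claim I would then argue as follows. Fix $w\in C^1_0(\Omega;\Rn)$ with $\|w\|_{\phi^*}\le 1$. By weak continuity,
\[
\int_\Omega u\,\div w\, dx = \lim_{k\to\infty}\int_\Omega u_k\,\div w\, dx \le \liminf_{k\to\infty} V_\phi(u_k),
\]
where the inequality is just the definition of $V_\phi(u_k)$ applied to the admissible test function $w$. Taking the supremum over all admissible $w$ on the left-hand side yields $V_\phi(u)\le \liminf_{k\to\infty}V_\phi(u_k)$. The argument for $\rho_{V,\phi}$ is identical except that the test functions $w$ are unrestricted and we keep the extra term $-\int_\Omega \phi^*(x,|w|)\,dx$, which does not depend on $u$ at all: for each fixed $w\in C^1_0(\Omega;\Rn)$,
\[
\int_\Omega u\,\div w - \phi^*(x,|w|)\, dx
= \lim_{k\to\infty}\Big(\int_\Omega u_k\,\div w - \phi^*(x,|w|)\, dx\Big)
\le \liminf_{k\to\infty}\rho_{V,\phi}(u_k),
\]
and taking the supremum over $w$ on the left gives the second inequality. (If $\rho_{\phi^*}(|w|)=\infty$ the middle quantities are $-\infty$ and the inequality is trivial, so one may restrict to $w$ with $\rho_{\phi^*}(|w|)<\infty$ as in Remark~\ref{rem:restrictedTestFunction} if one prefers to keep all quantities finite.)

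There is essentially no obstacle here: the only point requiring any care is making sure that $\div w$ is a legitimate dual element of $L^1$, which is immediate since $w\in C^1_0(\Omega;\Rn)$ gives $\div w\in C_c(\Omega)\subset L^\infty(\Omega)$, and that interchanging ``$\lim_k$'' with ``$\sup_w$'' is done in the safe direction — we only ever take the supremum \emph{after} passing to the limit, which produces a lower bound, never an upper bound. Thus no uniformity in $w$ is needed and the proof is a two-line application of weak continuity plus monotonicity of the supremum. I expect the write-up to be short; the main thing to state explicitly is the weak continuity of $u\mapsto\int_\Omega u\,\div w\,dx$ for fixed $w$.
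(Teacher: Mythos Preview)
Your proposal is correct and matches the paper's proof essentially line for line: fix $w$, use that $\div w\in L^\infty(\Omega)$ so weak $L^1$-convergence gives $\int_\Omega u\,\div w\,dx=\lim_k\int_\Omega u_k\,\div w\,dx$, bound by $\liminf_k V_\phi(u_k)$, take the supremum; for $\rho_{V,\phi}$ the paper likewise subtracts the $u$-independent term $\rho_{\phi^*}(|w|)<\infty$ and notes the restriction from Remark~\ref{rem:restrictedTestFunction}, exactly as you do.
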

\begin{proof}
If $w \in C^1_0(\Omega; \Rn)$, then $\div w \in L^\infty(\Omega)$ 
and weak convergence in $L^1(\Omega)$ with $\|w\|_{\phi^*}\le 1$ give 
\[
\int_\Omega u \div w \, dx 
= 
\lim_{k \to \infty} \int_\Omega u_k \div w \, dx\le \liminf_{k \to \infty} V_\phi(u_k).
\]
The first inequality of the claim follows by taking the supremum over all such $w$. Subtracting 
$\rho_{\phi^*}(|w|)<\infty$ from both sides of the equality similarly gives the second inequality. 
\end{proof}

\section{Approximation properties of the dual norm}
\label{sect:approximation}

In this section we prove a compactness-type property of $\BV^\phi$ and estimate 
the $V_\phi$-norm of $W^{1,1}_\loc$-functions by $\|\nabla u\|_\phi$. 
We first connect the norm $V_\phi$ with the associate space $(L^{\phi^*})'$-norm of the gradient. 
One crucial difference between these norms is that 
in the associate space norm we test with functions in $L^{\phi^*}$ whereas in $V_\phi$ the test functions 
are smooth. Thus some approximation is needed, but we cannot use density in $L^{\phi^*}(\Omega)$ 
since $\phi^*$ is not, in general, doubling. 
We start with a property of lower semicontinuous functions. Although the result is known, we did not find a 
reference for these exact properties of the approximating functions, so we provide a proof for completeness.

\begin{lem}\label{lem:lsc-approx}
Let $f:\Omega\to [0,\infty]$ be lower semicontinuous. Then there exist functions 
$w_i\in C^1_0(\Omega)$ with $0\le w_i\le f$ and $w_i\to f$.
\end{lem}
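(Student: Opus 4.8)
The statement to prove is Lemma~\ref{lem:lsc-approx}: for lower semicontinuous $f : \Omega \to [0,\infty]$, there exist $w_i \in C^1_0(\Omega)$ with $0 \le w_i \le f$ and $w_i \to f$ pointwise. The plan is to build the approximation in two stages. First I would approximate $f$ from below by an increasing sequence of Lipschitz functions using the standard inf-convolution (Moreau--Yosida) construction: set
\[
f_k(x) := \inf_{y\in\Omega} \big(f(y) + k\,|x-y|\big).
\]
Each $f_k$ is $k$-Lipschitz (as an infimum of $k$-Lipschitz functions, using that $f \ge 0$ so the infimum is finite), satisfies $0 \le f_k \le f_{k+1} \le f$, and by lower semicontinuity $f_k(x) \nearrow f(x)$ for every $x \in \Omega$; these are classical facts I would state with a one-line justification rather than reprove in detail.

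The remaining issue is that $f_k$ need not be compactly supported nor $C^1$. To fix compact support, I would exhaust $\Omega$ by open sets $\Omega_k \Subset \Omega_{k+1} \Subset \Omega$ with $\bigcup_k \Omega_k = \Omega$, pick cutoffs $\chi_k \in C^\infty_0(\Omega)$ with $0 \le \chi_k \le 1$ and $\chi_k \equiv 1$ on $\Omega_k$, and replace $f_k$ by $\chi_k f_k$; this is still Lipschitz, lies in $[0, f]$, and converges to $f$ pointwise since for fixed $x$ we have $\chi_k(x) = 1$ for $k$ large. Finally, to gain $C^1$ regularity I would mollify: a Lipschitz compactly supported function $g$ with $0 \le g \le f$ can be approximated in $C^1$ by $g * \eta_\delta \in C^\infty_0$, but mollification can push the function slightly above $f$, so instead I would first shrink $g$ by a factor and then mollify at a scale small compared to the shrinking — more precisely, replace $g$ by $(1-\tfrac1k)\,g$ restricted appropriately and mollify so that on a slightly smaller set the mollified function still stays below $f$; alternatively, and more cleanly, mollify $(g - \tfrac1k)_+$ which has support compactly inside $\{g > 0\}$ and, for $\delta$ small enough depending on $k$, stays below $g \le f$ while converging to $g$. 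A diagonal choice $w_i := $ (the mollified, cut-off, shrunk $f_i$) then satisfies all three requirements.

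The main obstacle I expect is the last step: arranging simultaneously that the $C^1$ (smooth, compactly supported) approximant stays \emph{below} $f$ — not just below $f_k$ — while still converging to $f$. Staying below a lower semicontinuous function is delicate precisely because $f$ can jump up, so mollification at a point near a downward jump of $f$ can overshoot. The cleanest fix is to exploit that $f_k$ is continuous (indeed Lipschitz): for a continuous $g$ with $0 \le g \le f$, the truncation $g_\epsilon := (g - \epsilon)_+$ is continuous, compactly supported inside $\{g > \epsilon\}$, and $g_\epsilon * \eta_\delta < g_\epsilon + \epsilon \le g$ once $\delta$ is small enough that the oscillation of the uniformly continuous $g_\epsilon$ over balls of radius $\delta$ is below $\epsilon$; letting $\epsilon, \delta \to 0$ along the diagonal with $k$ gives $w_k \in C^1_0(\Omega)$, $0 \le w_k \le f$, and $w_k \to f$ pointwise. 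I would then close the proof by noting pointwise convergence follows from $f_k \nearrow f$ together with the $\epsilon_k \to 0$ control, and monotonicity can be arranged by passing to a further subsequence if one wants $w_k$ increasing (though the statement only asks for $w_i \to f$, so this is optional).
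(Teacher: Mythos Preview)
Your approach is essentially correct and genuinely different from the paper's, but one inequality needs fixing. In the final step you write $g_\epsilon * \eta_\delta < g_\epsilon + \epsilon \le g$ with $g_\epsilon = (g-\epsilon)_+$; however $g_\epsilon + \epsilon = \max(g,\epsilon) \ge g$, so the chain fails at points where $g < \epsilon$. The repair uses the Lipschitz constant you already have: if $L$ is the Lipschitz constant of $g = \chi_k f_k$ and $L\delta \le \epsilon$, then for every $y \in B_\delta(x)$ one has $g_\epsilon(y) = (g(y)-\epsilon)_+ \le (g(x)+L\delta-\epsilon)_+ \le g(x)$, whence $g_\epsilon * \eta_\delta(x) \le g(x) \le f(x)$. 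You should also treat the trivial case $f\equiv\infty$ separately, since then the inf-convolution is identically $\infty$ and not Lipschitz; any sequence $i\chi_i$ with cutoffs $\chi_i \nearrow 1$ works there.

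The paper takes a different route: it writes $f_i = \sum_k 2^{-i}\chi_{\{f>2^{-i}k\}}$, uses that each superlevel set $\{f>2^{-i}k\}$ is open to pick $w^{k,i}_j \in C^1_0(\{f>2^{-i}k\})$ with $w^{k,i}_j \nearrow \chi_{\{f>2^{-i}k\}}$, and sets $w^i_j := \sum_{k=1}^j 2^{-i} w^{k,i}_j$, followed by a diagonal argument. This layer-cake construction avoids inf-convolutions and mollification entirely and handles $f\equiv\infty$ automatically. Your Moreau--Yosida route is equally standard and gives Lipschitz intermediaries explicitly, at the cost of the small extra care above to keep the mollified function below $f$ near the zero set of $g$.
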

\begin{proof}
We first define 
\[
f_i := \sum_{k=1}^\infty 2^{-i} \chi_{\{f>2^{-i}k\}}.
\]
If $f(x)\in (2^{-i}k, 2^{-i}(k+1)]$, then $f_i(x)= 2^{-i}k$. Hence $0\le f_i\le f$ and $f_i\nearrow f$. 
Thus it suffices to approximate $f_i$ and use a diagonal argument. 
Since $\{f>2^{-i}k\}$ is open, we can find non-negative functions $w_j^{k,i}\in C^1_0(\{f>2^{-i}k\})$ 
with $w_j^{k,i} \nearrow \chi_{\{f>2^{-i}k\}}$ as $j\to\infty$. Set 
\[
w_j^i := \sum_{k=1}^j 2^{-i} w_j^{k,i}. 
\]
Since each sum is finite, $w_j^i\in C^1_0(\Omega)$. Furthermore, 
$w_j^i \nearrow f_i$ as $j\to\infty$. 
\end{proof}

In Theorem~\ref{thm:BV-gradient}(1) we assume that $C^1_0(\Omega; \Rn)$ is dense in $L^{\phi^*}(\Omega; \Rn)$.
If $\phi^*$ satisfies \azero{} and \adec{}, then this holds by \cite[Theorem~3.7.15]{HarH19}.
Furthermore, $\phi^*$ satisfies these conditions 
if and only if $\phi$ satisfies \azero{} and \ainc{}. 
In other words, this is exactly the opposite of the linear growth case that 
we are interested in. 
However, in this case we can give an exact formula for the variation $V_\phi$ in terms of the norm of the 
associate space.

The case when $\phi^*$ does not satisfy \adec{} is more interesting
and involves the technical difficulties that we expect with $\BV$-type spaces. 
Now we need to approximate 
not the test function but the function itself so the regularity of 
$\phi$ matters.

\begin{thm}\label{thm:BV-gradient}
Let $\phi \in \Phiw(\Omega)$ and $u \in W^{1, 1}_\loc(\Omega)$. 
Then $V_\phi(u) \le \| \nabla u \|_{(L^{\phi^*}(\Omega))'} $.
\begin{enumerate}
\item[(1)]
If $C^1_0(\Omega; \Rn)$ is dense in $L^{\phi^*}(\Omega; \Rn)$, 
then $V_\phi(u) = \| \nabla u \|_{(L^{\phi^*}(\Omega))'}$.
\item [(2)]
If $\phi$ satisfies \azero{}, \aone{} and \adec{}, 
then $V_\phi(u) \approx \| \nabla u \|_\phi$.
\end{enumerate}
\end{thm}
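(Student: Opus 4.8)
The plan is to prove the main inequality $V_\phi(u)\le\|\nabla u\|_{(L^{\phi^*}(\Omega))'}$ first, then parts (1) and (2). For the main inequality, note that for $u\in W^{1,1}_\loc(\Omega)$ and $w\in C^1_0(\Omega;\Rn)$ the integration by parts formula $\int_\Omega u\div w\,dx=-\int_\Omega \nabla u\cdot w\,dx$ holds (the support of $w$ is a compact subset of $\Omega$, where $u$ has a weak gradient in $L^1$). Hence by the definition of the associate-space norm and the Cauchy--Schwarz-type pointwise bound $|\nabla u\cdot w|\le|\nabla u|\,|w|$,
\[
\int_\Omega u\div w\,dx=-\int_\Omega\nabla u\cdot w\,dx\le\int_\Omega|\nabla u|\,|w|\,dx\le\|\nabla u\|_{(L^{\phi^*}(\Omega))'}\,\||w|\|_{\phi^*}.
\]
Taking the supremum over $w$ with $\|w\|_{\phi^*}\le1$ gives $V_\phi(u)\le\|\nabla u\|_{(L^{\phi^*}(\Omega))'}$.

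For part (1), I would establish the reverse inequality. By the definition of $\|\nabla u\|_{(L^{\phi^*}(\Omega))'}=\sup_{\|v\|_{\phi^*}\le1}\int_\Omega|\nabla u|v\,dx$ and since the sign of $v$ can be adjusted componentwise, one has $\|\nabla u\|_{(L^{\phi^*}(\Omega))'}=\sup\{\int_\Omega\nabla u\cdot v\,dx\mid v\in L^{\phi^*}(\Omega;\Rn),\ \|v\|_{\phi^*}\le1\}$. Fix such a $v$ and $\epsilon>0$; by the density hypothesis there exists $w\in C^1_0(\Omega;\Rn)$ with $\|v-w\|_{\phi^*}<\epsilon$. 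Then
\[
\int_\Omega\nabla u\cdot w\,dx=\int_\Omega\nabla u\cdot v\,dx+\int_\Omega\nabla u\cdot(w-v)\,dx\ge\int_\Omega\nabla u\cdot v\,dx-\|\nabla u\|_{(L^{\phi^*})'}\,\|v-w\|_{\phi^*},
\]
so after normalizing $w$ (its $\phi^*$-norm is at most $1+\epsilon$) and integrating by parts, $V_\phi(u)\ge(1+\epsilon)^{-1}\big(\int_\Omega\nabla u\cdot v\,dx-\epsilon\|\nabla u\|_{(L^{\phi^*})'}\big)$. Letting $\epsilon\to0^+$ and taking the supremum over $v$ yields $V_\phi(u)\ge\|\nabla u\|_{(L^{\phi^*}(\Omega))'}$, hence equality.

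For part (2), the strategy is to bound $V_\phi(u)$ above and below by $\|\nabla u\|_\phi$ up to constants. The lower bound $V_\phi(u)\gtrsim\|\nabla u\|_\phi$ should follow from the norm conjugate formula (Hölder's inequality in the sharp form: $\|\nabla u\|_\phi\le 2\|\nabla u\|_{(L^{\phi^*})'}$, which holds in generalized Orlicz spaces), combined with the main inequality reversed --- but the main inequality gives $V_\phi\le\|\nabla u\|_{(L^{\phi^*})'}$, the wrong direction, so instead I would argue directly: since $\phi$ satisfies \azero{} and \aone{} and \adec{}, $\phi^*$ satisfies \azero{}, \aone{} and \ainc{}, and in particular $C^1_0(\Omega;\Rn)$ is dense in $L^{\phi^*}$ --- wait, density needs $\adec{}$ for $\phi^*$, i.e.\ \ainc{} for $\phi$, which we do \emph{not} assume here. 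So part (2) genuinely requires a different, approximation-of-$u$ argument: regularize $u$ by mollification $u_\delta=u*\eta_\delta$ on compactly contained subdomains, use that $V_\phi$ is lower semicontinuous and that $\nabla u_\delta=(\nabla u)*\eta_\delta$, and apply Corollary~\ref{cor:convolution} (Young's convolution inequality with asymptotically sharp constant) together with the associate-space Hölder inequality to control $\|\nabla u_\delta\|_\phi$ by $\|\nabla u\|_\phi$ up to the modulus of continuity $\omega(\delta)\to0$. The main obstacle is exactly this: passing from the smooth-test-function supremum defining $V_\phi$ to the modular norm $\|\nabla u\|_\phi$ without density of smooth functions in $L^{\phi^*}$, which is where the \VA{}-type sharp-constant machinery of Section~\ref{sect:auxiliary} has to be deployed --- and one must also verify that \azero{} and \aone{} and \adec{} for $\phi$ suffice to run the mollification estimates on a bounded domain, handling boundary effects by exhausting $\Omega$ with compactly contained subsets and using $L^\phi(\Omega)\hookrightarrow L^1(\Omega)$.
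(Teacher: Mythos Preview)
Your treatment of the main inequality and part~(1) is essentially correct and matches the paper's approach: integration by parts, then either Fatou's lemma (the paper) or a H\"older error estimate (your version) to pass from $C^1_0$ test functions to general $L^{\phi^*}$ test functions via density. One minor caveat: your error-term bound $\|\nabla u\|_{(L^{\phi^*})'}\|v-w\|_{\phi^*}$ presupposes $\|\nabla u\|_{(L^{\phi^*})'}<\infty$, so the case where this norm is infinite needs a separate line.

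Part~(2), however, has a genuine gap. You correctly diagnose the obstacle: $C^1_0$ is not dense in $L^{\phi^*}$ because $\phi^*$ need not satisfy \adec{}, so one cannot simply reuse part~(1). But your proposed fix --- mollify $u$ and invoke Corollary~\ref{cor:convolution} --- does not work, for two reasons. First, Corollary~\ref{cor:convolution} requires \VA{}, not merely \aone{}; under the hypotheses of part~(2) you only have \aone{}, so that corollary is unavailable. Second, and more fundamentally, mollifying $u$ does not address the difficulty at all: lower semicontinuity gives $V_\phi(u)\le\liminf V_\phi(u_\delta)$, the wrong direction for a lower bound on $V_\phi(u)$, and even for smooth $u_\delta$ the inequality $V_\phi(u_\delta)\gtrsim\|\nabla u_\delta\|_\phi$ is exactly the same problem you started with. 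The obstruction lives on the test-function side, not on the $u$ side.

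The paper's argument stays on the test-function side. Given $g\in L^{\phi^*}(\Omega)$ with $\|g\|_{\phi^*}\le 1$, it replaces $g$ by $\tilde g:=Mg/c_M$, where $M$ is the Hardy--Littlewood maximal operator. Since $\phi^*$ satisfies \azero{}, \aone{} and \ainc{} (inherited from \azero{}, \aone{}, \adec{} of $\phi$), $M$ is bounded on $L^{\phi^*}$, so $\|\tilde g\|_{\phi^*}\le 1$; and $Mg$ is automatically lower semicontinuous, so by Lemma~\ref{lem:lsc-approx} it can be approximated from below by $h_i\in C^1_0(\Omega)$ with $0\le h_i\le\tilde g$. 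Dominated convergence then gives $V_\phi(u)\ge\int_\Omega|\nabla u|\tilde g\,dx\ge\frac{1}{c_M}\int_\Omega|\nabla u|g\,dx$, and taking the supremum over $g$ yields $V_\phi(u)\gtrsim\|\nabla u\|_{(L^{\phi^*})'}\approx\|\nabla u\|_\phi$. The maximal-function trick is the missing idea.
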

\begin{proof}
Since $u \in W^{1, 1}_\loc(\Omega)$, it follows from the 
definition of $V_\phi$ and integration by parts that 
\begin{equation}\label{eq:integrationByParts}
V_\phi(u)=\sup\bigg\{ \int_\Omega \nabla u \cdot w \, dx \,\Big|\, w \in C^1_0(\Omega; \Rn), \| w \|_{\phi^*}\le 1 
\bigg\}. 
\end{equation}
The definition of the associate space norm implies that 
\[
\int_\Omega \nabla u \cdot w \, dx
\le 
\int_\Omega |\nabla u|\, |w| \, dx
\le 
\| \nabla u \|_{(L^{\phi^*}(\Omega))'} \|w\|_{L^{\phi^*}(\Omega)}.
\]
Taking the supremum over $w\in C^1_0(\Omega; \Rn)$ with $\|w\|_{L^{\phi^*}(\Omega)}\le 1$, we conclude that 
$V_\phi(u) \le \| \nabla u \|_{(L^{\phi^*}(\Omega))'}$. 

Under assumption (1), we next show the opposite inequality, 
$ \| \nabla u \|_{(L^{\phi^*}(\Omega))'} \le V_\phi(u)$. 
Let $ w \in L^{\phi^*}(\Omega; \Rn)$ with $\|w\|_{\phi^*}=1$ and let $(w_j)$ be a sequence from 
$C^1_0(\Omega; \Rn)$ with $w_j \to w$ in $L^{\phi^*}(\Omega; \Rn)$ and pointwise a.e. 
Since also $w_j/\|w_j\|_{\phi^*} \to w$ in $L^{\phi^*}(\Omega; \Rn)$, we may assume that 
$\|w_j\|_{\phi^*}=1$. By Fatou's Lemma, 
\[
\liminf_{j \to \infty} \int_\Omega \nabla u \cdot w_j \, dx \ge \int_\Omega \nabla u \cdot w \, dx,
\]
so it follows from \eqref{eq:integrationByParts} that 
\[
V_\phi(u)\ge \sup\bigg\{ \int_\Omega \nabla u \cdot w \, dx \,\Big|\, w \in L^{\phi^*}(\Omega; \Rn), \| 
w\|_{\phi^*}\le 
1 \bigg\}.
\]
Let $h\in L^{\phi^*}(\Omega)$.
We set $w:=\frac{\nabla u}{|\nabla u|} h$ if $|\nabla u|\neq 0$ and $0$ otherwise.
This gives 
\[
V_\phi(u)\ge \sup\bigg\{ \int_\Omega |\nabla u| \, h \, dx \,\Big|\, h \in L^{\phi^*}(\Omega), \| h\|_{\phi^*}\le 1 
\bigg\} = \| \nabla u \|_{(L^{\phi^*}(\Omega))'}. 
\]
Hence $V_\phi(u) = \| \nabla u \|_{(L^{\phi^*}(\Omega))'}$ and the proof of (1) is complete.

Then we prove (2).
%
%
Fix $h\in C^1_0(\Omega)$. Since $w_{\epsilon,\delta}:=(\frac{\nabla u}{|\nabla u|+\epsilon})* \eta_\delta$ is 
bounded and converges to $\frac{\nabla u}{|\nabla u|+\epsilon}$ in $L^1$ and a.e.\ as $\delta\to 0^+$, 
it follows by $L^1$-convergence and dominated convergence with majorant $|\nabla u|\, |h|$ that 
\[
\lim_{\epsilon\to 0^+}\lim_{\delta\to 0^+}\int_\Omega \nabla u \cdot (w_{\epsilon,\delta} h) \, dx
=
\lim_{\epsilon\to 0^+}\int_\Omega \frac{|\nabla u|^2}{|\nabla u|+\epsilon}\, h \, dx
=
\int_\Omega |\nabla u|\, h \, dx.
\]
Since $w_{\epsilon,\delta}h\in C^1_0(\Omega;\Rn)$, this and \eqref{eq:integrationByParts} imply that 
\[
V_\phi(u)
\ge
\sup\bigg\{ \int_\Omega |\nabla u|\, h \, dx \,\Big|\, h \in C^1_0(\Omega), \| h \|_{\phi^*}\le 1 \bigg\}. 
\]
Let $g\in L^{\phi^*}(\Omega)$ with $\| g \|_{\phi^*}\le 1$. Since $\phi^*$ satisfies 
\azero{}, \aone{} and \ainc{}, the Hardy--Littlewood maximal operator $M$ 
is bounded in $L^{\phi^*}(\Omega)$, with some constant $c_M>0$ \cite{Has15}. 
The function $\tilde g:=\frac{Mg}{c_M}\in L^{\phi^*}(\Omega)$ is lower semi-continuous, 
$\| \tilde g \|_{\phi^*}\le 1$ and $\tilde g\ge \frac g{c_M}$. 
By Lemma~\ref{lem:lsc-approx}, we can find $h_i\in C^1_0(\Omega)$ with $h_i\to \tilde g$ 
and $0\le h_i\le \tilde g$. By dominated convergence, with $|\nabla u| \,\tilde g$ as a majorant, we find that 
\[
V_\phi(u)
\ge
\lim_{i\to \infty}\int_\Omega |\nabla u|\, h_i \, dx 
=
\int_\Omega |\nabla u|\, \tilde g \, dx
\ge
\frac1{c_M}\int_\Omega |\nabla u|\, g \, dx.
\]
Since $g$ is arbitrary, this implies that 
\[
V_\phi(u)
\ge
\frac1{c_M}
\sup\bigg\{ \int_\Omega |\nabla u|\, g \, dx \,\Big|\, g \in L^{\phi^*}(\Omega), \|g \|_{\phi^*}\le 1 \bigg\}
=
\frac{ \| \nabla u\|_{(L^{\phi^*}(\Omega))'} }{c_M}.
\]
By Theorem~3.4.6 and Proposition~2.4.5 of \cite{HarH19}, 
$\| \nabla u \|_{(L^{\phi^*}(\Omega))'} \approx \| \nabla u\|_\phi$, 
so the proof of (2) is complete. 
\end{proof}

The next lemma is the counterpart of \cite[Theorem 5.3]{EvaG92} and \cite[Theorem 1.17]{Giusti}, 
albeit with an extra constant $c_\phi$. The extra constant is expected, since we assume only 
\aone{}, cf.\ Example~\ref{eg:weightNeeded} and Proposition~\ref{prop:modularDensity}.

\begin{lem}[Approximation by smooth functions]\label{lem:density}
Assume that $\phi \in \Phiw(\Omega)$ satisfies \azero{}, \aone{} and \adec{}. 
Then there exists $c_\phi\ge 1$ 
such that for every $u \in L^\phi(\Omega)$ we can find $u_k \in C^\infty(\Omega)$ with 
\[
u_k \to u\text{ in }L^\phi(\Omega)
\quad\text{and}\quad
V_\phi(u) \le \lim_{k \to \infty} V_\phi(u_k)\le c_\phi V_\phi(u).
\]
If additionally $u\in W^{1,p}(\Omega)$ or $u \in L^p(\Omega)$, $p\in [1,\infty)$, 
then the sequence can be chosen with $u_k \to u$ in $ W^{1,p}(\Omega)$ or $L^p(\Omega)$ as well.
\end{lem}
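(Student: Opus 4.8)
The plan is to adapt the classical partition-of-unity and mollification construction (cf.\ \cite[Theorem~5.3]{EvaG92}), carefully tracking the effect on $V_\phi$ and absorbing the unavoidable loss into the constant $c_\phi$. If $V_\phi(u)=\infty$ there is nothing to prove (smooth functions are $L^\phi$-dense under \azero{}, \aone{}, \adec{}, and the lower bound is vacuous), so I would assume $V_\phi(u)<\infty$; recall that $u\in L^\phi(\Omega)\hookrightarrow L^1(\Omega)$ since $\Omega$ is bounded and $\phi$ satisfies \azero{}, so $V_\phi(u)$ is meaningful.

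Concretely, I would first fix an increasing exhaustion $\Omega_1\Subset\Omega_2\Subset\cdots$ of $\Omega$, set $A_j:=\Omega_{j+1}\setminus\overline{\Omega_{j-1}}$ and $A_j':=\Omega_{j+2}\setminus\overline{\Omega_{j-2}}$, and choose a subordinate partition of unity $\{\zeta_j\}$ with $\zeta_j\in C^\infty_0(A_j)$, $0\le\zeta_j\le1$ and $\sum_j\zeta_j\equiv1$ on $\Omega$. Given $\epsilon>0$, I would pick mollification radii $\delta_j>0$ small enough that $\eta_{\delta_j}*(u\zeta_j)$ and $\eta_{\delta_j}*(u\nabla\zeta_j)$ are supported in $A_j'$ and
\[
\|\eta_{\delta_j}*(u\zeta_j)-u\zeta_j\|_\phi<\epsilon\,2^{-j}
\qquad\text{and}\qquad
\|\eta_{\delta_j}*(u\nabla\zeta_j)-u\nabla\zeta_j\|_\phi<\epsilon\,2^{-j},
\]
(and the analogous smallness in $W^{1,p}$ or $L^p$ when $u$ lies in such a space); this uses $u\zeta_j,u\nabla\zeta_j\in L^\phi(\Omega)$ together with the $L^\phi$-convergence of mollifications, valid under \azero{}, \aone{}, \adec{} (see \cite{HarH19}). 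Setting $u_\epsilon:=\sum_j\eta_{\delta_j}*(u\zeta_j)$, this is a locally finite sum, so $u_\epsilon\in C^\infty(\Omega)$, and since $\sum_ju\zeta_j=u$ and within each residue class modulo $4$ the sets $A_j'$ are pairwise disjoint, additivity of $\rho_\phi$ over disjoint supports together with the above gives $\|u_\epsilon-u\|_\phi\le c\epsilon$ (and likewise in $W^{1,p}$ or $L^p$).

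The heart of the matter is the upper bound on $V_\phi(u_\epsilon)$. Fix $w\in C^1_0(\Omega;\Rn)$ with $\|w\|_{\phi^*}\le1$. Transferring the (even) mollifier onto $w$, using $\div\big(\zeta_j(\eta_{\delta_j}*w)\big)=\nabla\zeta_j\cdot(\eta_{\delta_j}*w)+\zeta_j\div(\eta_{\delta_j}*w)$ and $\sum_j\nabla\zeta_j\equiv0$, one arrives (all sums being finite once restricted to $\supp w$) at the identity
\[
\int_\Omega u_\epsilon\div w\,dx=\int_\Omega u\,\div\tilde w\,dx-\int_\Omega v_\epsilon\cdot w\,dx,
\]
with $\tilde w:=\sum_j\zeta_j(\eta_{\delta_j}*w)\in C^1_0(\Omega;\Rn)$ and $v_\epsilon:=\sum_j\big(\eta_{\delta_j}*(u\nabla\zeta_j)-u\nabla\zeta_j\big)\in L^\phi(\Omega)$. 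Since $|\eta_{\delta_j}*w|\le c\,Mw$ uniformly in $j$ and $\sum_j\zeta_j=1$, we get $|\tilde w|\le c\,Mw$; because $\phi^*$ inherits \azero{}, \aone{} and \ainc{} from $\phi$ (here $\phi$'s \adec{} enters), the maximal operator is bounded on $L^{\phi^*}(\Omega)$ \cite{Has15}, whence $\|\tilde w\|_{\phi^*}\le c_\phi$ for a constant $c_\phi$ depending only on $\phi$ and $n$, uniformly in $w$ and $\epsilon$. Therefore $\int_\Omega u\,\div\tilde w\,dx\le c_\phi V_\phi(u)$ by the definition of $V_\phi$, while H\"older's inequality in $L^\phi$ gives $\big|\int_\Omega v_\epsilon\cdot w\,dx\big|\le 2\|v_\epsilon\|_\phi\|w\|_{\phi^*}\le 2\|v_\epsilon\|_\phi$, and the disjoint-band argument applied to $v_\epsilon$ yields $\|v_\epsilon\|_\phi\le c\epsilon$. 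Taking the supremum over $w$ gives $V_\phi(u_\epsilon)\le c_\phi V_\phi(u)+c\epsilon$.

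Finally, choosing $\epsilon=1/k$ produces $u_k\in C^\infty(\Omega)$ with $u_k\to u$ in $L^\phi(\Omega)$ (and in $W^{1,p}$ or $L^p$ where applicable) and $V_\phi(u_k)\le c_\phi V_\phi(u)+c/k$; after passing to a subsequence so that $\lim_kV_\phi(u_k)$ exists, this gives $\lim_kV_\phi(u_k)\le c_\phi V_\phi(u)$, while the reverse inequality $V_\phi(u)\le\lim_kV_\phi(u_k)$ is the weak lower semicontinuity of Lemma~\ref{lem:sequence-in-BV}, since $u_k\to u$ in $L^\phi(\Omega)\hookrightarrow L^1(\Omega)$. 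I expect the main obstacle to be the two terms generated by the partition of unity: in the classical $\BV$ case the bound $\|w\|_\infty\le1$ trivializes both, but here, $\phi^*$ being non-doubling, there is no uniform $L^\infty$-control on the test functions, so one must instead dominate $\tilde w$ pointwise by $Mw$ (invoking the maximal inequality, hence \adec{}) and recognize the remainder as $\int_\Omega v_\epsilon\cdot w\,dx$ with $v_\epsilon$ small in $L^\phi$, estimated by the sharp H\"older inequality — which is exactly what forces $c_\phi>1$.
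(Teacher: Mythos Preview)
Your proposal is correct and follows essentially the same route as the paper: partition of unity plus layered mollification, the split $\int u_\epsilon\div w = I - II$ with $I$ controlled by $\|\tilde w\|_{\phi^*}\lesssim\|Mw\|_{\phi^*}$ via the maximal inequality on $L^{\phi^*}$ (using that $\phi^*$ inherits \azero{}, \aone{}, \ainc{}), and $II$ controlled by H\"older together with the smallness of $\eta_{\delta_j}*(u\nabla\zeta_j)-u\nabla\zeta_j$ in $L^\phi$. The only cosmetic difference is that you justify $\|u_\epsilon-u\|_\phi\le c\epsilon$ via a disjoint-band (mod~4) argument, whereas the paper simply invokes the countable quasitriangle inequality for $\|\cdot\|_\phi$ (valid since $\phi\in\Phiw$ makes the norm equivalent to a genuine norm, \cite[Corollary~3.2.5]{HarH19}); the latter is slightly more direct.
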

\begin{proof}
For $k\in\Z_+$, we define
\[
U_k := \bigg\{x \in \Omega \,\Big|\, \dist(x, \partial \Omega) > \frac1{m+k} \bigg\},
\]
where $m>0$ is chosen such that $U_1$ is non-empty. 
Set $V_1:=U_2$ and $V_k := U_{k+1} \setminus \overline{U_{k-1}}$ for $k\ge 2$. 
Let $(\xi_k)$ be a partition of unity subordinate to $(V_k)$, i.e.\ 
$\xi_k \in C^\infty_0 (V_k)$, $0 \le \xi_k \le 1$ and $\sum_{k=1}^\infty \xi_k=1$ for all $x \in \Omega$.

Let $\epsilon>0$ and let $\eta$ be the standard mollifier. Choose $\epsilon_k\in (0,\epsilon)$ so small that
$\supp(\eta_{\epsilon_k} * (u \xi_k)) \subset V_k$, 
\begin{align}\label{eq:density}
\| \eta_{\epsilon_k}* (u\xi_k) -u\xi_k \|_\phi \le \frac{\epsilon}{2^k} 
\qquad\text{and}\qquad
\| \eta_{\epsilon_k}* (u \nabla \xi_k) -u \nabla \xi_k \|_\phi \le \frac{\epsilon}{2^k};
\end{align}
the last conditions are possible by \azero{}, \aone{} and \adec{} 
since $u\in L^\phi(\Omega)$ and $\xi_k, |\nabla \xi_k|\in L^\infty(\Omega)$ \cite[Theorem~4.4.7]{HarH19} .
Let us define
\[
u_\epsilon := \sum_{k=1}^\infty \eta_{\epsilon_k}* (u\xi_k).
\]
In a neighborhood of each point there are at most three non-zero terms in the sum, 
hence $u_\epsilon \in C^\infty (\Omega)$. 

Since $\|\cdot\|_\phi$ is equivalent to a norm, it satisfies a countable quasitriangle inequality \cite[Corollary~3.2.5]{HarH19}. 
Using $u = \sum_{k=1}^\infty \xi_ku$ and \eqref{eq:density} with this inequality, we find that 
\[
\begin{split}
\|u_\epsilon -u\|_\phi &\le \Big\|\sum_{k=1}^\infty (\eta_{\epsilon_k}* (u\xi_k) - \xi_ku) \Big\|_\phi
\lesssim \sum_{k=1}^\infty \| \eta_{\epsilon_k}* (u\xi_k) -u\xi_k \|_\phi 
\le
\sum_{k=1}^\infty\frac{\epsilon}{2^k} = \epsilon.
\end{split}
\]
Thus $u_\epsilon \to u$ in $L^\phi(\Omega)$ and so 
Lemma~\ref{lem:sequence-in-BV} yields
\[
V_\phi(u) \le \liminf_{\epsilon\to 0^+} V_\phi(u_\epsilon). 
\]
If we assume $u \in L^p(\Omega)$ or $|\nabla u|\in L^p(\Omega)$, then we can add to \eqref{eq:density} also the 
requirement 
\[
\| \eta_{\epsilon_k}* (u\xi_k) -u\xi_k \|_p \le \frac{\epsilon}{2^k}
\quad\text{or}\quad
\| \eta_{\epsilon_k}* (\nabla u \,\xi_k) -\nabla u \,\xi_k \|_p \le \frac{\epsilon}{2^k}
\]
and estimate in the same way $\|u_\epsilon - u\|_p\le \epsilon$ 
or $\|\nabla u_\epsilon - \nabla u\|_p\le \epsilon$. 
Thus also $u_\epsilon \to u$ in $L^p(\Omega)$ or 
$W^{1,p}(\Omega)$, as claimed.


Fix $w\in C^1_0(\Omega;\Rn)$ with $\|w\|_{\phi^*}\le 1$. Since $w$ has a compact support in $\Omega$, 
only finitely many of the terms $(\eta_{\epsilon_k}*(u\xi_k)) \div w$ are non-zero. 
Thus the sums below are really finite and can be interchanged with integrals and derivatives.
Using the definition of $u_\epsilon$, Fubini's Theorem in the convolution, the product rule 
and $\sum_{k=1}^\infty \nabla \xi_k = \nabla \sum_{k=1}^\infty \xi_k = \nabla 1=0$, 
we conclude that 
\[
\begin{split}
\int_\Omega u_\epsilon \div w \, dx 
&= \sum_{k=1}^\infty \int_\Omega (\eta_{\epsilon_k}*(u\xi_k)) \div w \, dx 
= \sum_{k=1}^\infty \int_\Omega (u\xi_k) \div (\eta_{\epsilon_k}* w) \, dx \\
&= \sum_{k=1}^\infty \int_\Omega u \div (\xi_k(\eta_{\epsilon_k}* w)) \, dx 
- \sum_{k=1}^\infty \int_\Omega u \nabla \xi_k \cdot (\eta_{\epsilon_k}* w) \, dx \\
&= \underbrace{ \sum_{k=1}^\infty \int_\Omega u \div (\xi_k(\eta_{\epsilon_k}* w)) \, dx}_{=: I} - 
\underbrace{ \sum_{k=1}^\infty \int_\Omega w \cdot (\eta_{\epsilon_k}*(u \nabla \xi_k) - u\nabla \xi_k)\,dx}_{=: 
II}\,.
\end{split}
\]
For $II$ we obtain by Hölder's inequality and \eqref{eq:density} that
\[
|II| 
\lesssim 
\sum_{k=1}^\infty \|w\|_{\phi^*} \big\| \eta_{\epsilon_k}*(u \nabla \xi_k) - u\nabla \xi_k \big\|_{\phi}
\le 
\sum_{k=1}^\infty \frac{\epsilon}{2^k} 
= \epsilon .
\]
As $\sum_{k=1}^\infty \xi_k(\eta_{\epsilon_k}* w) \in C^1_0(\Omega; \Rn)$ is a viable 
test function (up to a constant), we obtain that 
\[
\begin{split}
|I| 
= 
\bigg|\int_\Omega u \div \bigg(\sum_{k=1}^\infty \xi_k\,\eta_{\epsilon_k}* w\bigg) \, dx \bigg|
&\le 
 V_\phi(u) 
\bigg\|\sum_{k=1}^\infty \xi_k\,\eta_{\epsilon_k}* w\bigg\|_{\phi^*}\\
&\lesssim 
V_\phi(u) \| Mw\|_{\phi^*}.
\end{split}
\]
Since $\phi^*$ satisfies \azero{}, \aone{} and \ainc{}, maximal operator $M$ 
is bounded in $L^{\phi^*}(\Omega)$ \cite{Has15}. So the estimates for $I$ and $II$ give 
\[
\Big| \int_\Omega u_\epsilon \div w \, dx \Big| \lesssim V_\phi(u) + \epsilon.
\]
Hence $V_\phi(u_\epsilon) \lesssim V_\phi(u) + \epsilon \to V_\phi(u)$ as $\epsilon\to 0^+$. 
By choosing a subsequence we ensure that $\lim_k V_\phi(u_k)$ exists. 
\end{proof}

A bounded domain $\Omega\subset\Rn$ is a \emph{John domain} if there exist constants $0< \alpha \le \beta<\infty$ and 
a point $x_0 \in \Omega$ such that each point $x\in \Omega$ can be joined to $x_0$ by a rectifiable curve 
$\gamma:[0,\ell_\gamma] 
\to \Omega$ parametrized by arc length with $\gamma(0) = x$, $\gamma(\ell_\gamma) = x_0$, 
$\ell_\gamma\leq 
\beta\,,$ and
\[
t \leq \tfrac{\beta}{\alpha} \dist\big(\gamma(t), \partial \Omega \big) \quad \text{for all} \quad t\in[0, 
\ell_\gamma].
\]
%
Examples of John domains include convex domains and domains with Lipschitz boundary, but also some 
domains with fractal boundaries such as the von Koch snowflake.
The next compactness-type result is the counterpart of Theorem 5.5 in \cite{EvaG92}, 
see also Theorem 1.19 in \cite{Giusti}.

\begin{thm}\label{thm:BV-compact-Lphi}
Let $\Omega \subset \Rn$ be a bounded John domain and $\phi \in \Phiw(\Omega)$ satisfy \azero{}, \aone{} and \adec{}. 
If $(u_k)$ is a sequence in $\BV^\phi(\Omega)$ with $ \sup_k \|u_k\|_{\BV^\phi} < \infty$, then there exists a 
subsequence $(u_{k_j})$ and $u \in \BV^\phi(\Omega)$ such that $u_{k_j} \to u$ in $L^\phi(\Omega)$ 
and $\|u\|_{\BV^\phi}\le \liminf \|u_{k_j}\|_{\BV^\phi}$.
\end{thm}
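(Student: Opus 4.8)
The plan is to imitate the classical proof of $\BV$-compactness (\cite[Theorem~5.5]{EvaG92}): first extract the limit through the embedding $\BV^\phi(\Omega)\hookrightarrow\BV(\Omega)$, and then upgrade the $L^1$-convergence it provides to $L^\phi$-convergence. Since \azero{} for $\phi$ implies \azero{} for $\phi^*$, the estimates recorded before Lemma~\ref{lem:pseudo-modular} give $V(u,\Omega)\le c\,V_\phi(u)$ and $L^\phi(\Omega)\hookrightarrow L^1(\Omega)$, hence $\BV^\phi(\Omega)\hookrightarrow\BV(\Omega)$. Therefore $\sup_k\bigl(\|u_k\|_{L^1(\Omega)}+|Du_k|(\Omega)\bigr)<\infty$, and the $\BV$-compactness recalled in Section~\ref{sect:auxiliary} furnishes a subsequence, still written $(u_k)$, and $u\in\BV(\Omega)$ with $u_k\to u$ in $L^1(\Omega)$. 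As strong $L^1$-convergence is in particular weak $L^1$-convergence, Lemma~\ref{lem:sequence-in-BV} gives $V_\phi(u)\le\liminf_k V_\phi(u_k)\le\sup_k\|u_k\|_{\BV^\phi}<\infty$.

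It remains to improve the convergence to $L^\phi$ and to obtain $u\in L^\phi(\Omega)$. For each $k$, Lemma~\ref{lem:density} provides $v_k\in C^\infty(\Omega)$ with $\|v_k-u_k\|_\phi\le 2^{-k}$ and $V_\phi(v_k)\le c_\phi V_\phi(u_k)+1$. Then $(v_k)$ is bounded in $L^\phi(\Omega)$, and by Theorem~\ref{thm:BV-gradient}(2) --- whose hypotheses \azero{}, \aone{}, \adec{} hold --- $\|\nabla v_k\|_\phi\approx V_\phi(v_k)$ is bounded, so $(v_k)$ is bounded in $W^{1,\phi}(\Omega)$. Moreover $\|v_k-u_k\|_{L^1(\Omega)}\lesssim\|v_k-u_k\|_\phi\to 0$, so $v_k\to u$ in $L^1(\Omega)$ as well. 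Invoking a Rellich--Kondrachov-type compact embedding $W^{1,\phi}(\Omega)\hookrightarrow\hookrightarrow L^\phi(\Omega)$, valid on the bounded John domain $\Omega$ (see the last paragraph), a further subsequence satisfies $v_{k_j}\to w$ in $L^\phi(\Omega)$; uniqueness of limits in $L^1(\Omega)$ forces $w=u$. Hence $u\in L^\phi(\Omega)$, and by the triangle inequality $u_{k_j}\to u$ in $L^\phi(\Omega)$.

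Relabelling the final subsequence as $(u_{k_j})$, I would conclude as follows. Since $u\in L^\phi(\Omega)$ and $V_\phi(u)<\infty$, we have $u\in\BV^\phi(\Omega)$. From $u_{k_j}\to u$ in $L^\phi(\Omega)$ we get $\|u_{k_j}\|_\phi\to\|u\|_\phi$, while Lemma~\ref{lem:sequence-in-BV} (applied along this subsequence) gives $V_\phi(u)\le\liminf_j V_\phi(u_{k_j})$; adding these and using $\lim_j a_j+\liminf_j b_j\le\liminf_j(a_j+b_j)$ yields
\[
\|u\|_{\BV^\phi}=\|u\|_\phi+V_\phi(u)\le\lim_j\|u_{k_j}\|_\phi+\liminf_j V_\phi(u_{k_j})\le\liminf_j\|u_{k_j}\|_{\BV^\phi},
\]
which is the assertion.

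The only step that is not routine bookkeeping is the passage from $L^1$- to $L^\phi$-convergence, and it is exactly here that the John-domain hypothesis is needed: it supplies the Sobolev gain of integrability for $W^{1,\phi}$ (equivalently, the compact embedding $W^{1,\phi}(\Omega)\hookrightarrow\hookrightarrow L^\phi(\Omega)$), which fails on general bounded domains. If no direct reference is at hand, I would derive it from the Sobolev--Poincar\'e inequality on a John domain: that inequality bounds $(v_k)$ in a Musielak--Orlicz space $L^{\phi_n}(\Omega)$ whose $\Phi$-function grows strictly faster than $\phi$ uniformly in $x\in\Omega$, so $\{\phi(\cdot,|v_k|)\}_k$ is equi-integrable, and Vitali's convergence theorem --- applicable because \adec{} makes $\phi$ doubling --- upgrades the a.e.\ convergence (along a further subsequence extracted via $L^1$) to convergence in $L^\phi(\Omega)$. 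The technical care lies in checking that $\phi_n$ inherits \azero{}, \aone{}, \adec{} and that $\phi_n/\phi\to\infty$ uniformly in $x$; alternatively, the whole upgrade can be bypassed by quoting a compact embedding $\BV^\phi(\Omega)\hookrightarrow\hookrightarrow L^\phi(\Omega)$ on John domains, if available (cf.\ \cite{EleHH_pp}), and applying it to $(u_k)$ directly.
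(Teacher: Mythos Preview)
Your proof is correct and follows essentially the same route as the paper: approximate each $u_k$ by a smooth $v_k$ via Lemma~\ref{lem:density}, use Theorem~\ref{thm:BV-gradient}(2) to bound $(v_k)$ in $W^{1,\phi}(\Omega)$, and invoke the compact embedding $W^{1,\phi}(\Omega)\hookrightarrow\hookrightarrow L^\phi(\Omega)$ on John domains. Your preliminary detour through $\BV$-compactness to identify the $L^1$-limit is harmless but unnecessary, since the compact embedding already furnishes the $L^\phi$-limit directly.
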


\begin{proof}
By Lemma~\ref{lem:density}, we may choose functions $ v_k \in C^\infty(\Omega) \cap \BV^\phi(\Omega)$ such that
\[
\|u_k - v_k\|_\phi < \tfrac1k \qquad\text{and}\qquad \sup_k V_{\phi}(v_k) < \infty.
\]
Theorem~\ref{thm:BV-gradient}(2) for $v_k\in C^\infty(\Omega)$ 
yields that $\sup_k \|\nabla v_k\|_\phi < \infty$, so the sequence is bounded 
in $W^{1,\phi}(\Omega)$. Since $\Omega$ is a John domain, 
the compact embedding $W^{1,\phi}(\Omega) \hookrightarrow \hookrightarrow L^\phi(\Omega)$ holds 
\cite{HarH19, HarHJ_pp}, 
and thus $(v_k)$ has a subsequence $(v_{k_j})$ converging to some $u$ in $L^\phi(\Omega)$. 
Therefore $\|u_{k_j} - v_{k_j}\|_\phi < \frac1{k_j}$ implies that also $u_{k_j} \to u$ in $L^\phi(\Omega)$ 
and, by Lemma~\ref{lem:sequence-in-BV}, $u \in \BV^\phi(\Omega)$.
\end{proof}


\section{Explicit expression for the dual modular}
\label{sect:explicit}

In this section we derive a formula for the ``dual modular'' $\rho_{V,\phi}$ from Definition~\ref{defn:V-rho} in terms of $\rho_\phi$ of the derivative's absolutely continuous part and the singular part with weight 
given by the recession function 
\[
\phi'_\infty(x)= \limsup_{t\to \infty} \frac{\phi(x, t)} t.
\]
Throughout this section, we assume that $\phi\in \Phic(\Omega)$. 
Then $t\mapsto\frac{\phi(\cdot, t)} t$ is increasing and the limit superior is a limit. 
Moreover, if the derivative of $\phi$ with respect to $t$ exists, then 
it is increasing by convexity, so $\lim_{t\to \infty} \phi'(\cdot, t)$
exists and equals $\phi'_\infty$ by l'H\^{o}pital's rule.
The following lemma illustrates the significance of $\phi'_\infty$. 

In \cite[Section~3 and Example A.1]{HarHL08} it was shown that $\log$-H\"older continuity is not 
sufficient when working in $\BV^\px$. 
Similarly, the \aone{} condition (corresponding to $\log$-Hölder continuity) is not sufficient 
in the next results 
in view of Example~\ref{eg:weightNeeded}. Instead, we use the restricted \VA{} which corresponds 
to strong $\log$-H\"older continuity (Proposition~\ref{prop:strong}). 
Note that here we need the inequality 
at every point, since we will use the estimate with the singular measure $D^su$. 

\begin{lem}\label{lem:bound}
Let $\phi\in \Phic(\Omega)$ satisfy restricted \VA{}.
If $w\in C(\Omega)$ with $\rho_{\phi^*}(w)<\infty$, then $|w|\le \phi'_\infty$. 
\end{lem}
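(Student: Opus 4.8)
The plan is to argue pointwise: fix $x_0\in\Omega$ and show $|w(x_0)|\le\phi'_\infty(x_0)$. The statement is interesting only when $\phi'_\infty(x_0)<\infty$, so assume this; then in particular $x_0$ lies in the set $\{\phi'_\infty<\infty\}$ where restricted \VA{} gives a genuine estimate. The idea is that $\rho_{\phi^*}(w)<\infty$ forces $\phi^*(x,|w(x)|)$ to be finite on a large set, and finiteness of $\phi^*(x,s)$ for large $s$ is exactly equivalent to the linear-growth bound $s\le\phi'_\infty(x)$ via the Young-type duality $\phi^*(x,s)=\sup_{t\ge0}(st-\phi(x,t))$: if $s>\phi'_\infty(x)$ then $st-\phi(x,t)\to\infty$ as $t\to\infty$ (since $\frac{\phi(x,t)}t\nearrow\phi'_\infty(x)<s$), so $\phi^*(x,s)=\infty$. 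Conversely $\phi^*(x,s)<\infty$ implies $s\le\phi'_\infty(x)$.

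First I would record this elementary fact: for $\phi\in\Phic(\Omega)$ and any $x$, $\phi^*(x,s)<\infty\iff s\le\phi'_\infty(x)$, using that $t\mapsto\frac{\phi(x,t)}t$ is increasing with supremum $\phi'_\infty(x)$. Next, from $\rho_{\phi^*}(w)=\int_\Omega\phi^*(x,|w(x)|)\,dx<\infty$ we get that $\phi^*(x,|w(x)|)<\infty$ for a.e.\ $x$, hence $|w(x)|\le\phi'_\infty(x)$ for a.e.\ $x\in\Omega$. The remaining work is to upgrade this a.e.\ statement to the claimed everywhere bound at $x_0$, using the continuity of $w$ and the \VA{}-type regularity of $\phi$ around $x_0$.

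For the upgrade, suppose for contradiction that $|w(x_0)|>\phi'_\infty(x_0)$, say $|w(x_0)|=\phi'_\infty(x_0)+3\delta$ for some $\delta>0$ (the case $\phi'_\infty(x_0)=\infty$ being vacuous). By continuity of $w$, $|w(x)|\ge\phi'_\infty(x_0)+2\delta$ on a ball $B_r(x_0)$. I want to contradict finiteness of $\phi^*(x,|w(x)|)$ on a positive-measure subset, i.e.\ to find a positive-measure set of $x$ near $x_0$ with $\phi'_\infty(x)<|w(x)|$ — equivalently $\phi^*(x,|w(x)|)=\infty$. This needs an upper bound on $\phi'_\infty(x)$ for $x$ near $x_0$ in terms of $\phi'_\infty(x_0)$, and here is where restricted \VA{} enters: applying the inequality $\phi(x,\tfrac t{1+\omega(|x-x_0|)})\le\phi(x_0,t)+\omega(|x-x_0|)$ with $x_0$ playing the role of the ``good'' point (so $\phi'_\infty(x_0)<\infty$), dividing by $t$ and letting $t\to\infty$ (along a range respecting $\phi(x_0,t)\in[0,K/|x-x_0|^n]$), one gets roughly $\frac{\phi'_\infty(x)}{1+\omega(|x-x_0|)}\le\phi'_\infty(x_0)$, hence $\phi'_\infty(x)\le(1+\omega(|x-x_0|))\phi'_\infty(x_0)$. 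Shrinking $r$ so that $\omega(r)\phi'_\infty(x_0)<\delta$ yields $\phi'_\infty(x)\le\phi'_\infty(x_0)+\delta<\phi'_\infty(x_0)+2\delta\le|w(x)|$ on $B_r(x_0)$, so $\phi^*(x,|w(x)|)=\infty$ there, forcing $\rho_{\phi^*}(w)=\infty$, a contradiction.

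\textbf{Main obstacle.} The delicate point is the passage to the limit $t\to\infty$ in the restricted \VA{} inequality: the constraint $\phi(x_0,t)\le K/|x-x_0|^n$ caps how large $t$ may be taken for a fixed $x$, so one cannot literally send $t\to\infty$ with $x$ fixed. The fix is to choose, for each $x\in B_r(x_0)$, the largest admissible $t=t(x)$ — namely $t(x)$ with $\phi(x_0,t(x))\approx K/|x-x_0|^n$ — which tends to $\infty$ as $x\to x_0$ (since $\phi(x_0,\cdot)<\infty$ grows at most linearly, $\phi(x_0,t)\le\phi'_\infty(x_0)t$, so $t(x)\gtrsim\frac{K}{\phi'_\infty(x_0)|x-x_0|^n}\to\infty$), and then $\frac{\phi(x_0,t(x))}{t(x)}\to\phi'_\infty(x_0)$. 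One also needs to know $\phi^*(x,s)=\infty$ whenever $s>\phi'_\infty(x)$ strictly — which is immediate from the supremum definition as noted — and to be slightly careful that $\phi(x,\cdot)$ could itself be $+\infty$ beyond some point, but this only helps (it makes $\phi'_\infty(x)=\infty$, contradicting nothing, or rather it means such $x$ are automatically excluded). I expect the whole argument to fit in under a page once the ``largest admissible $t$'' device is set up correctly.
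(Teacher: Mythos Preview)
There is a genuine gap in your argument: the claim that $\phi'_\infty(x)\le(1+\omega(|x-x_0|))\phi'_\infty(x_0)$ for $x$ near $x_0$ is false in general, and hence so is your conclusion that $\phi^*(x,|w(x)|)=\infty$ on a ball. The restricted \VA{} inequality with your ``largest admissible $t(x)$'' gives only a bound on $\phi(x,\cdot)$ at the single finite value $t(x)/(1+\omega(r))$, and this says nothing about the limit $\phi'_\infty(x)=\lim_{s\to\infty}\phi(x,s)/s$. Concretely, take $\phi(x,t)=t^{p(x)}$ with $p(x)=1+|x|^\alpha$: this satisfies restricted \VA{} (Proposition~\ref{prop:strong}), yet $\phi'_\infty(0)=1$ while $\phi'_\infty(x)=\infty$ for every $x\ne 0$. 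In this example $\phi^*(x,s)\approx s^{p'(x)}$ is \emph{finite} for every $x\ne 0$ and every $s$, so your pointwise-infinity mechanism cannot fire.

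What rescues the lemma---and what the paper does---is a quantitative lower bound instead of a pointwise-infinity argument. With $t_x$ chosen so that $\phi(x_0,t_x)=|x-x_0|^{-n}$, restricted \VA{} gives $\phi(x,\beta t_x)\le |x-x_0|^{-n}+\omega(r_0)$, and plugging $t=\beta t_x$ into the supremum defining $\phi^*(x,w(x))$ yields
\[
\phi^*(x,w(x))\ge \beta t_x\, w(x)-\phi(x,\beta t_x)\gtrsim (1-\beta)\,|x-x_0|^{-n}-c,
\]
using $w(x)>\beta^{-2}\phi'_\infty(x_0)\ge \beta^{-2}\phi(x_0,t_x)/t_x$. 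The right-hand side is finite for each $x\ne x_0$ but is not integrable over $B(x_0,r_0)$, which is what contradicts $\rho_{\phi^*}(w)<\infty$. Your setup (continuity of $w$, the a.e.\ inequality, the choice of $t(x)$ with $\phi(x_0,t(x))\approx|x-x_0|^{-n}$) is exactly right; the missing step is to feed this $t(x)$ into $\phi^*(x,\cdot)$ rather than into $\phi'_\infty(x)$.
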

\begin{proof}
We assume that $w\ge 0$ to simplify notation. 
Suppose to the contrary that $w(x_0)>\phi'_\infty(x_0)$ for some point $x_0\in\Omega$. 
Since $\phi$ is convex, $t\mapsto \frac{\phi(x_0, t)} t$ is increasing and 
$\frac{\phi(x_0, t)} t\le \phi'_\infty(x_0)$ for every $t>0$. 
Now $\phi'_\infty(x_0)<\infty$ and $\phi(x_0, t) \le \phi'_\infty(x_0) t$ give 
$\phi^*(x_0, s) \ge \infty \chi_{(\phi'_\infty(x_0), \infty)}(s)$ and $\phi^*(x_0,w(x_0)) = \infty$. From this and 
$\rho_{\phi^*}(w)<\infty$ 
it follows that $w\le \phi'_\infty$ almost everywhere. However, 
$\phi'_\infty$ need not be continuous, so we cannot directly conclude that the inequality holds everywhere. 

Let $\omega$ be from \VA{} for $K:=1$. 
Choose $r_0>0$ and $\beta:= \frac1{1+ \omega(r_0)}$ such that $\phi'_\infty(x_0) < \beta^3 w(x_0) < \beta^2 w(x)$ 
for every $x\in B(x_0, r_0)$. 
Note that $\phi^*(x_0, \beta^3 w(x_0)) = \infty$ and $\phi(x_0,t)\le \phi'_\infty(x_0)t\le \beta^2 w(x) t$ 
for all $t\ge 0$. 
Since $\phi(x_0,\cdot)$ is finite and convex, it is continuous and we can find $t_x$ with 
$\phi(x_0,t_x)=|x-x_0|^{-n}$. By restricted \VA{},
\[
\phi(x, \beta t_x)\le \phi(x_0, t_x) + \omega(r_0)
=
\phi(x_0, t_x)+\tfrac1\beta -1 \le \beta^2 w(x) t_x +\tfrac1\beta. 
\]
By the definition of $\phi^*$ and the previous inequalities, we obtain that 
\[
\phi^*(x, w(x)) 
\ge 
\beta t_x w(x) - \phi(x, \beta t_x)
\ge
\beta(1-\beta) w(x)t_x  - \tfrac1\beta
\ge
\tfrac{1-\beta}\beta \phi(x_0, t_x) - \tfrac1\beta.
\]
Since $\phi(x_0, t_x) = |x-x_0|^{-n}$, we conclude that 
\[
\int_\Omega \phi^*(x, w)\, dy 
\gtrsim
\int_{B(x_0,r_0)} |x-x_0|^{-n}\, dy - c = \infty.
\]
This contradicts the assumption $\rho_{\phi^*}(w)<\infty$ and thus the counter-assumption 
$w(x_0)>\phi'_\infty(x_0)$ was incorrect and the claim is proved. 
\end{proof}


%

We define 
\[
T^\phi := \big\{ w \in C^1_0(\Omega; \Rn)\,\big|\, \rho_{\phi^*}(|w|)<\infty \big\}.
\]
Then the usual test function space of $\BV$ is $T^1$ since $\rho_{\infty}(|w|)<\infty$ if and 
only if $|w|\le 1$ a.e. In the next propositions we first consider the singular and absolutely continuous parts 
of the derivative separately. Then we combine them to handle the whole function in Theorem~\ref{thm:exactFormula}.


\begin{prop}\label{prop:singularPart}
Let $\phi\in \Phic(\Omega)$ satisfy \azero{}, \adec{} and restricted \VA{}. 
If $u\in \BV(\Omega)$, then 
\[
\sup_{w\in T^\phi} \int_\Omega w\cdot dD^su = \int_\Omega \phi'_\infty \, d|D^su|.
\]
\end{prop}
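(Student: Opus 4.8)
The plan is to prove the two inequalities separately. For the upper bound $\sup_{w\in T^\phi} \int_\Omega w\cdot dD^su \le \int_\Omega \phi'_\infty \, d|D^su|$, the key observation is Lemma~\ref{lem:bound}: every $w\in T^\phi$ is continuous with $\rho_{\phi^*}(|w|)<\infty$, hence $|w|\le \phi'_\infty$ pointwise. Therefore
\[
\int_\Omega w\cdot dD^su \le \int_\Omega |w|\, d|D^su| \le \int_\Omega \phi'_\infty\, d|D^su|,
\]
and taking the supremum over $w\in T^\phi$ gives the bound. This direction is essentially immediate once Lemma~\ref{lem:bound} is in hand.

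The lower bound is the substantive part. I would first reduce to the case $\int_\Omega \phi'_\infty\, d|D^su|<\infty$; otherwise one must produce test functions $w\in T^\phi$ with $\int w\cdot dD^su$ arbitrarily large, which I would handle by a truncation/exhaustion argument on the set $\{\phi'_\infty<\infty\}$ combined with the finite case. In the finite case, fix $\epsilon>0$. The idea is to approximate the weight $\phi'_\infty$ from below by a continuous function: since $\phi'_\infty$ is the increasing limit of the continuous functions $x\mapsto \phi(x,t)/t$ (here convexity of $\phi$ is used, as noted at the start of Section~\ref{sect:explicit}), and $|D^su|$ is a finite Radon measure, one can find a continuous $g$ with $0\le g\le \phi'_\infty$ and $\int_\Omega g\, d|D^su| \ge \int_\Omega \phi'_\infty\, d|D^su| - \epsilon$, using the monotone convergence theorem together with an upper-semicontinuity/Lusin-type regularization so that $g$ is genuinely continuous and compactly supported in $\Omega$. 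Then set $w_0 := g\,\sigma$, where $\sigma$ is the Radon--Nikodym polar density of $D^su$ with respect to $|D^su|$ (so $|\sigma|=1$ $|D^su|$-a.e.), and approximate $w_0$ by smooth $w\in C^1_0(\Omega;\Rn)$; the crucial point is to control $\rho_{\phi^*}(|w|)$. Because $|w_0| = g \le \phi'_\infty$ and, on $\{\phi'_\infty<\infty\}$, $\phi^*(x,\cdot)$ is finite on $[0,\phi'_\infty(x))$ but jumps to $\infty$ at $\phi'_\infty(x)$, one must take $|w|$ strictly below $\phi'_\infty$; so I would actually use $(1-\eta)g$ for small $\eta>0$ and exploit Corollary~\ref{cor:convolution} (Young's convolution inequality with asymptotically sharp constants, which requires \VA{}, available here via restricted \VA{} on the relevant set) to ensure $\rho_{\phi^*}$ of the mollified test function stays finite. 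Using \adec{} for $\phi$, equivalently \ainc{} for $\phi^*$, helps keep $\rho_{\phi^*}$ of the smoothed function under control away from the critical threshold.

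Having produced $w\in T^\phi$ with $|w|$ close to $g$ and $w\cdot\sigma\ge g - (\text{small})$ $|D^su|$-a.e., I would estimate
\[
\int_\Omega w\cdot dD^su = \int_\Omega (w\cdot\sigma)\, d|D^su| \ge \int_\Omega g\, d|D^su| - c\epsilon \ge \int_\Omega \phi'_\infty\, d|D^su| - c\epsilon,
\]
and let $\epsilon\to 0^+$. I expect the main obstacle to be the simultaneous control of two competing requirements on the test function: it must approximate the polar density $\sigma$ well in the $|D^su|$-integral (which lives on a $|D^su|$-null set for Lebesgue measure, so mollification must be done carefully relative to both measures), while keeping $\rho_{\phi^*}(|w|)<\infty$, i.e.\ keeping $|w|$ uniformly below the threshold $\phi'_\infty$ where $\phi^*$ blows up. The interplay of the discontinuity of $\phi'_\infty$, the non-doubling nature of $\phi^*$, and the need for sharp constants is precisely why restricted \VA{} (rather than mere \aone{}) is assumed, as flagged before the statement; the sharp Young convolution inequality is the tool that makes this transition quantitative.
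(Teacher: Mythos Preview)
Your upper bound is correct and matches the paper. The lower bound, however, has two genuine gaps. First, you propose to set $w_0=g\sigma$ with $\sigma$ the polar density of $D^su$ and then mollify; but $\sigma$ is only defined $|D^su|$-a.e., and $D^su$ is singular with respect to Lebesgue measure, so $g\sigma$ is not an $L^1_{\loc}$ function that one can convolve in the usual sense. Even after a Lusin--Tietze replacement of $\sigma$ by a continuous $\tilde\sigma$, mollification will not in general preserve the pointwise bound $|w|\le g$, which is exactly what you need because $\phi^*(x,\cdot)$ jumps to $\infty$ at $\phi'_\infty(x)$. Second, you invoke Corollary~\ref{cor:convolution} to control $\rho_{\phi^*}$ of the mollified function; but that corollary, applied to $\rho_{\phi^*}$, requires \VA{} for $\phi^*$, which is \emph{not} among the hypotheses of Proposition~\ref{prop:singularPart} (it is added only later, in Proposition~\ref{prop:modularDensity}). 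Restricted \VA{} for $\phi$ does not yield \VA{} for $\phi^*$. A minor further point: $x\mapsto\phi(x,t)/t$ need not be continuous here, since continuity of $\phi$ in $x$ is not assumed in this proposition.

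The paper circumvents all of this by a layer-cake construction. It first replaces $\phi(\cdot,k)/k$ by its lower semicontinuous envelope $h_k$, shows $h_k\nearrow\phi'_\infty$ (using \aone{} where $\phi'_\infty=\infty$ and restricted \VA{} where $\phi'_\infty<\infty$), and obtains $\int h_k\,d|D^su|\to\int\phi'_\infty\,d|D^su|$ by monotone convergence. Then it approximates $h_k$ from below by a finite sum $\sum_{j}\tfrac1K\chi_{\{h_k>j/K\}}$ of indicators of \emph{open} sets, and on each open level set it uses Lemma~\ref{lem:singularPart} (the classical duality $|D^su|(U)=\sup\{\int_U w\cdot dD^su:\,w\in C^1_0(U;\Rn),\,|w|\le1\}$) to pick $w_j$. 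The sum $w_\epsilon:=\sum_j\tfrac1K w_j\in C^1_0(\Omega;\Rn)$ then satisfies $|w_\epsilon|\le h_k$ \emph{pointwise} by construction, hence $\phi^*(\cdot,|w_\epsilon|)\le\phi^*(\cdot,h_k)\le\phi(\cdot,k)$ and $\rho_{\phi^*}(|w_\epsilon|)<\infty$ directly from \azero{} and \adec{}, with no convolution estimate needed. This avoids both the ill-posed mollification of $\sigma$ and the missing \VA{} assumption on $\phi^*$.
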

\begin{proof}
By the definition of the total variation of a measure and Lemma~\ref{lem:bound}, 
\[
\sup_{w\in T^\phi} \int_\Omega w\cdot dD^su 
\le 
\sup_{w\in T^\phi} \int_\Omega |w|\,d|D^su| 
\le
\int_\Omega \phi'_\infty \, d|D^su|.
\]
For the opposite inequality, we define $h_k:\Omega\to[0,\infty]$ by
\[
h_k(x):=\lim_{r\to 0^+}\inf_{y\in B(x,r)} \frac{\phi(y, k)}k.
\]
Then $h_k$ is lower semicontinuous with $h_k\le \frac{\phi(\cdot, k)}k\le \phi'_\infty$. 
From the first inequality it follows that $\phi^*(\cdot, h_k) \le \phi(\cdot, k)$ so 
$\rho_{\phi^*}(h_k)\le \rho_\phi(k)<\infty$ since $\phi$ satisfies \azero{} and \adec{} and $\Omega$ is bounded. 
Let us show that $h_k\to \phi'_\infty$. If $\phi'_\infty(x)=\infty$, then since 
$\phi^+(k)<\infty$ we can use \aone{} in all sufficiently small balls to conclude that 
\[
h_k(x)=\lim_{r\to 0^+}\inf_{y\in B(x,r)} \frac{\phi(y, k)}k \ge \frac{\phi(x, \beta k) -1 }k \to \beta 
\phi'_\infty(x)=\infty
\]
as $k\to \infty$. If $\phi'_\infty(x)<\infty$, then we use the same inequality but now with $\beta:= \frac{1}{1+ 
\omega(r)}$ from 
the restricted \VA{} condition; we obtain the desired convergence as $\beta\to 1^-$. 

Note that $h_k$ is increasing in $k$ since $\phi$ is convex.
It follows by monotone convergence that 
\[
\int_\Omega \phi'_\infty \, d|D^su|
=
\lim_{k\to\infty} \int_\Omega h_k\,d|D^su|.
\]
Let $\epsilon>0$ and assume $\int_\Omega \phi'_\infty \, d|D^su|<\infty$. 
We can find $h=h_k$ and $K>0$ such that 
\[
\int_\Omega \phi'_\infty \, d|D^su|
\le
\int_\Omega h\,d|D^su| + \epsilon
\le
\sum_{j=1}^{K^2} \int_\Omega \tfrac1K \chi_{\{h>\frac jK\}}\,d|D^su| + 2\epsilon
=
\sum_{j=1}^{K^2} \tfrac1K |D^su|\big(\{h>\tfrac jK\}\big) + 2\epsilon.
\]
Since $h$ is lower semicontinuous, $\{h>\tfrac jK\}$ is open, and hence
by Lemma~\ref{lem:singularPart} we can choose 
$w_j\in C^1_0(\{h>\tfrac jK\}; \Rn)$ with $|w_j|\le 1$ such that 
\[
\int_\Omega \phi'_\infty \, d|D^su|
\le
\sum_{j=1}^{K^2} \frac1K \int_{\{h>\tfrac jK\}} w_j\cdot dD^su + 3\epsilon
=
\int_\Omega \bigg(\underbrace{\sum_{j=1}^{K^2} \tfrac1K w_j}_{=:w_\epsilon}\bigg) \cdot dD^su + 3\epsilon.
\]
Note that $w_\epsilon \in C^1_0(\Omega; \Rn)$ and 
\[
|w_\epsilon| \le \sum_{j=1}^{K^2} \tfrac1K |w_j| \le \sum_{j=1}^{K^2} \tfrac1K \chi_{\{h>\frac jK\}} \le h
\]
so that $\rho_{\phi^*}(|w_\epsilon|)<\infty$. 
Therefore $w_\epsilon\in T^\phi$ and 
\[
\int_\Omega \phi'_\infty \, d|D^su|
\le
\int_\Omega w_\epsilon\cdot dD^su + 3\epsilon
\le
\sup_{w\in T^\phi} \int_\Omega w\cdot dD^su +3\epsilon.
\]
The upper bound follows from this as $\epsilon\to 0^+$. 
If $\int_\Omega \phi'_\infty \, d|D^su|=\infty$, then a similar argument gives 
$\frac1{3\epsilon}\le \sup_{w\in T^\phi} \int_\Omega w\cdot dD^su$ and the claim again follows.
\end{proof}

In the next result we assume that $\phi$ is continuous in both variables. Removing 
this somewhat unusual requirement is an open problem. Similar to the case of $V_\phi$ 
in Theorem~\ref{thm:BV-gradient}(2), the 
approximation is made much more difficult by the fact that $\phi^*$ is not doubling. 

\begin{prop}\label{prop:exactFormulaAC}
Let $\phi\in \Phic(\Omega) \cap C(\Omega\times [0,\infty))$ satisfy \azero{} and \adec{}.
If $u\in \BV(\Omega)$, then 
\[
\sup_{w\in T^\phi} \int_\Omega \nablaa u\cdot w - \phi^*(x, |w|)\, dx 
= 
\rho_\phi(|\nablaa u|).
\]
\end{prop}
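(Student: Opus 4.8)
The plan is to get ``$\leq$'' from Young's inequality and ``$\geq$'' by constructing near-optimal admissible test fields, where the joint continuity of $\phi$ is what makes the construction possible. For ``$\leq$'', for $w\in T^\phi$ Young's inequality gives $\nablaa u\cdot w\leq|\nablaa u|\,|w|\leq \phi(x,|\nablaa u|)+\phi^*(x,|w|)$, so $\int_\Omega\nablaa u\cdot w-\phi^*(x,|w|)\,dx\leq\rho_\phi(|\nablaa u|)$, and we take the supremum. This uses only $\nablaa u\in L^1(\Omega;\Rn)$, so for the reverse inequality it suffices to show that
\[
\mathcal L(g):=\sup_{w\in T^\phi}\int_\Omega g\cdot w-\phi^*(x,|w|)\,dx\geq \rho_\phi(|g|)\qquad\text{for every }g\in L^1(\Omega;\Rn).
\]
A truncation of the values of $g$ with monotone convergence (and, when $\rho_\phi(|g|)=\infty$, the argument of Proposition~\ref{prop:singularPart}) reduces this to bounded $g$; approximating the level sets of $g$ from inside by compact sets then reduces it to a simple field $g=\sum_{i=1}^N v_i\chi_{K_i}$ with $v_i\neq0$ and $K_i$ compact and pairwise disjoint. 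The pointwise optimal test field is $W(x)=\phi'(x,|g(x)|)\tfrac{g(x)}{|g(x)|}$: by the version of Young's equality in Lemma~\ref{lem:dual-equality} it satisfies $\int_\Omega g\cdot W-\phi^*(x,|W|)\,dx=\rho_\phi(|g|)$, and $\rho_{\phi^*}(|W|)\lesssim\rho_\phi(|g|)<\infty$ by \adec{}. The difficulty is that $W$ is neither smooth nor admissible after a naive mollification: since $\phi'_\infty$ is only lower semicontinuous and may jump downward, a mollified field can have $|w|>\phi'_\infty$, hence $\phi^*(\cdot,|w|)$ non-integrable, on a set of positive measure.

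To circumvent this I would use a continuous, slightly suboptimal surrogate. Fix $\epsilon,\delta\in(0,1)$, pick open sets $U_i\Supset K_i$ with the $\overline{U_i}$ pairwise disjoint and compactly contained in $\Omega$, and cutoffs $\zeta_i\in C_0(U_i)$ with $0\leq\zeta_i\leq1$ and $\zeta_i\equiv1$ on $K_i$. Because $\phi\in C(\Omega\times[0,\infty))$, the difference quotient $\psi^i_\delta(x):=\tfrac1{\delta|v_i|}\bigl(\phi(x,|v_i|)-\phi(x,(1-\delta)|v_i|)\bigr)$ is continuous in $x$; by convexity $\psi^i_\delta\leq\phi'(\cdot,|v_i|)\leq\phi'_\infty$, and $\psi^i_\delta\nearrow\phi'(\cdot,|v_i|)$ as $\delta\to0^+$. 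Put $V:=\sum_{i=1}^N\zeta_i\,(1-\epsilon)\psi^i_\delta\,\tfrac{v_i}{|v_i|}\in C_0(\Omega;\Rn)$. Then $|V|\leq(1-\epsilon)\phi'_\infty<\phi'_\infty$, and using $\phi^*(x,\lambda s)\leq\lambda\phi^*(x,s)$ together with $\phi^*(x,\phi'(x,|v_i|))=|v_i|\phi'(x,|v_i|)-\phi(x,|v_i|)\leq\phi(x,2|v_i|)$, the function $\phi^*(\cdot,|V|)$ is dominated by a continuous function, so $\rho_{\phi^*}(|V|)<\infty$.

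The crux is the mollification $w:=V*\eta_{\delta'}\in C^\infty_0(\Omega;\Rn)$: I claim $w\in T^\phi$ for all sufficiently small $\delta'$. The ingredients are: (i) \azero{} with convexity gives $\phi'_\infty\geq\beta>0$, so there is room below $\phi'_\infty$; (ii) the $\overline{U_i}$ being pairwise disjoint forces $B(x,\delta')$ to meet at most one $U_i$ once $\delta'$ is small; (iii) $\psi^i_\delta$ and $\zeta_i$ are uniformly continuous on the compact sets $\overline{U_i}$, so the factor $(1-\epsilon)$ absorbs the oscillation of $\psi^i_\delta$ over $B(x,\delta')$ where $\psi^i_\delta$ is not too small, while where it is small (and near $\partial U_i$) $|w|$ stays below $\beta$; and (iv) $\phi^*(x,s)\leq s/\beta$ for $s<\beta/L$. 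Together these yield $|w|\leq\phi'_\infty$ a.e.\ and $\phi^*(\cdot,|w|)$ bounded, so $w\in T^\phi$. As $\delta'\to0^+$, $w\to V$ in $L^1$ and a.e., $w$ is uniformly bounded, and $\phi^*(x,|w(x)|)\to\phi^*(x,|V(x)|)$ a.e.\ because $\phi^*(x,\cdot)$ is continuous on $[0,\phi'_\infty(x))\ni|V(x)|$; by dominated convergence $\int_\Omega g\cdot w-\phi^*(x,|w|)\,dx\to\int_\Omega g\cdot V-\phi^*(x,|V|)\,dx$, so $\mathcal L(g)\geq\int_\Omega g\cdot V-\phi^*(x,|V|)\,dx$.

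Finally I would send the auxiliary parameters to their limits: shrinking $U_i$ to $K_i$ kills the contribution of $U_i\setminus K_i$ to $\rho_{\phi^*}(|V|)$; letting $\delta\to0^+$ with $\psi^i_\delta\nearrow\phi'(\cdot,|v_i|)$ and monotone convergence; and letting $\epsilon\to0^+$, to obtain
\[
\mathcal L(g)\geq\sum_{i=1}^N\int_{K_i}\bigl(|v_i|\phi'(x,|v_i|)-\phi^*(x,\phi'(x,|v_i|))\bigr)\,dx=\sum_{i=1}^N\int_{K_i}\phi(x,|v_i|)\,dx=\rho_\phi(|g|),
\]
again by Young's equality, and then undo the reductions. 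The step I expect to be the main obstacle is the mollification estimate in the previous paragraph — keeping the smoothed field inside $T^\phi$ in spite of the discontinuity of $\phi'_\infty$ — and this is precisely where the extra, somewhat unusual assumption that $\phi$ be jointly continuous enters.
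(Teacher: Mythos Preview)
Your argument is correct and shares the same core idea as the paper's proof: the upper bound via Young's inequality is identical, and for the lower bound both regularize the (generally discontinuous) derivative $\phi'$ in the $t$-variable to obtain a continuous surrogate, then mollify in $x$, relying crucially on the joint continuity of $\phi$.

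The differences are in implementation. The paper reduces first to \emph{continuous} $g\in C(\Omega;\Rn)\cap L^\phi(\Omega;\Rn)$ (via Fatou and $L^1$-approximation) rather than to simple fields on compacts, and uses $\psi_\epsilon:=\phi*_t\zeta_\epsilon$ so that $\psi'_\epsilon$ is jointly continuous; with $g$ continuous, the composition $x\mapsto\psi'_\epsilon(x,|g(x)|v_i(x))$ is uniformly continuous on $\supp v_i$, and the mollification radius is chosen so that
\[
w_{\epsilon,i}:=\max\{\psi'_\epsilon(\cdot,|g|v_i)-\epsilon,0\}*_x\eta_\delta\le\psi'_\epsilon(\cdot,|g|)\le\phi'(\cdot,|g|)
\]
pointwise. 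This single inequality gives $\phi^*(\cdot,|w_{\epsilon,i}|)\le\phi^*(\cdot,\phi'(\cdot,|g|))\lesssim\phi(\cdot,|g|)\in L^1$ directly from \adec{}, so $w_{\epsilon,i}\in T^\phi$ without any case analysis, and dominated convergence applies with majorant $c\,\phi(\cdot,|g|)$. Your route via simple fields makes the direction locally constant (simplifying the vector-to-scalar reduction that the paper does via $w\mapsto\frac g{\epsilon+|g|}|w|$), but requires your case split on the size of $\psi^i_\delta$ and the lower bound $\phi'_\infty\ge\beta$ to keep the mollified field in $T^\phi$. Both approaches work; the paper's is a bit shorter because the domination by $\phi'(\cdot,|g|)$ handles the integrability of $\phi^*(\cdot,|w|)$ in one line.
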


\begin{proof}
The upper bound follows directly from Young's inequality:
\[
\sup_{w\in T^\phi} \int_\Omega \nablaa u\cdot w - \phi^*(x, |w|)\, dx 
\le 
\int_\Omega \phi(x, |\nablaa u|)\, dx.
\]
For the lower bound we make several reductions. Choose $g_i\in C(\Omega; \Rn)\cap L^\phi(\Omega; \Rn)$ 
with $g_i\to \nablaa u$  pointwise a.e and in $L^1(\Omega; \Rn)$. Then Fatou's Lemma and $L^1$-convergence yield
\[
\int_\Omega \phi(x, |\nablaa u|)\, dx\le \liminf_{i \to \infty} \int_\Omega \phi(x, |g_i|)\, dx
\quad\text{and}\quad
\lim_{i \to \infty} \int_\Omega g_i\cdot w \, dx = \int_\Omega \nablaa u\cdot w \, dx 
\]
when $w\in T^\phi$. Thus it suffices to show that 
\[
\int_\Omega \phi(x, |g|)\, dx
\le
\sup_{w\in T^\phi} \int_\Omega g\cdot w - \phi^*(x, |w|)\, dx 
\]
for $g\in C(\Omega; \Rn)\cap L^\phi(\Omega; \Rn)$. 
Furthermore, replacing $w$ by $\frac g{\epsilon+|g|} |w|$ and letting $\epsilon\to 0^+$, we see 
that $g\cdot \frac g{\epsilon+|g|} |w|\to |g| \, |w|$ 
pointwise. Thus by monotone convergence the vector-values of $g$ and $w$ are irrelevant and we 
need only show that 
\[
\int_\Omega \phi(x, |g|)\, dx
\le
\sup_{w\in C^1_0(\Omega)} \int_\Omega |g w| - \phi^*(x, |w|)\, dx 
\]
for $g\in C(\Omega)\cap L^\phi(\Omega)$. We also exclude test-functions with $\rho_{\phi^*}(w)=\infty$ by 
Remark~\ref{rem:restrictedTestFunction}.

Let $\phi'$ be the left-derivative of $\phi$ with respect to the second variable. 
Then $\phi'$ is left-continuous and $\phi(x,s)\ge \phi(x,s_0)+\phi'(x,s_0)(s-s_0)$ by convexity. 
We would like to choose $w:=\phi'(x, |g|)$ 
in the previous supremum. However, this function is not in general smooth and we cannot use regular 
approximation by smooth functions since $\phi^*$ is not doubling. Instead we define 
\[
\psi_\epsilon(x,t) := 
\int_{-\infty}^\infty \phi(x, \max\{\tau,0\}) \zeta_\epsilon(t-\tau)\, d\tau
= 
\phi*_t\zeta_\epsilon(x, t),
\]
where $\zeta_\epsilon$ is a mollifier in $\R$ with support in $[0, \epsilon]$. 
Since $\phi$ and $\phi'$ are increasing in the second variable and left-continuous, we see that 
$\psi_\epsilon\nearrow \phi$ and $\psi_\epsilon'\nearrow \phi'$ as $\epsilon\to 0^+$. 
Furthermore, 
$\psi_\epsilon' = \phi*_t(\zeta_\epsilon')$ is continuous in $x$ since $\phi$ is and it continuous 
in $t$ as a convolution with a smooth function. 
Let $v_i \in C_0(\Omega)$ with $0\le v_i \le 1$ and $v_i\nearrow 1$. 
By uniform continuity in $\supp v_i$, we can choose $\delta=\delta_{\epsilon, i}>0$ such that 
$\psi_\epsilon'(x, |g(x)|\,v_i(x))-\epsilon\le \psi_\epsilon'(y, |g(y)|\,v_i(y))$ for all $x\in B(y,\delta)$ and $y\in\Omega$. 
Then
\[
w_{\epsilon, i}:= \max\{\psi_\epsilon'(\cdot, |g|\, v_i)-\epsilon, 0\} *_x \eta_\delta
\le \psi_\epsilon'(\cdot, |g|)\le \phi'(\cdot, |g|).
\]

Now $w_{\epsilon,i}\to \phi'(\cdot, |g|)$, so we conclude by Fatou's Lemma that
\[
\int_\Omega |g| \phi'(x, |g|) \, dx \le \liminf_{i\to\infty, \epsilon\to 0}\int_\Omega |g w_{\epsilon, i}| \, dx.
\]
Since $\phi$ satisfies \azero{} and \adec{}, we see that
\[
\phi^*(x, |w_{\epsilon,i}|) 
\le 
\phi^*(x, \phi'(x, |g|))
\le 
|g| \phi'(x, |g|)
\lesssim 
\phi(x, |g|).
\]
As $g\in L^\phi(\Omega)$ and $\phi$ satisfies \adec{}, $\rho_\phi(g)<\infty$. 
Thus dominated convergence with majorant $c \phi(\cdot, g)$ yields
\[
\int_\Omega \phi^*(x, \phi'(x, |g|))\, dx = 
\lim_{i\to\infty, \epsilon\to 0}\int_\Omega \phi^*(x, |w_{\epsilon,i}|)\, dx.
\]
Since $w_\epsilon$ is a valid test-function and $\phi'(\cdot, |g|)<\infty$ a.e., 
this together with ``Young's equality'' (Lemma~\ref{lem:dual-equality}) implies that 
\begin{align*}
\sup_{w\in C^1_0(\Omega)} \int_\Omega |g w| - \phi^*(x, |w|)\, dx 
&\ge
\liminf_{i\to\infty, \epsilon\to 0}\int_\Omega |g| \, |w_{\epsilon,i}| - \phi^*(x, |w_{\epsilon,i}|)\, dx \\
&\ge
\int_\Omega |g| \phi'(x, |g|) - \phi^*(x, \phi'(x, |g|))\, dx 
=
\int_\Omega \phi(x, |g|) \, dx. 
\end{align*}
This completes the proof of the lower bound.
\end{proof}

We next derive a simple, closed form expression for $\rho_{V,\phi}$. This is the main result of the paper. 
Note that the right-hand side expression was also obtained recently in the one-dimensional case 
for a modular based on the Riesz variation assuming the \VA{} condition \cite{HasJR_pp}.

\begin{thm}\label{thm:exactFormula}
Let $\phi\in \Phic(\Omega) \cap C(\Omega\times [0,\infty))$ satisfy \azero{}, \adec{} and restricted \VA{}. 
If $u\in \BV(\Omega)$, then 
\[
\rho_{V,\phi}(u) = \rho_\phi(|\nablaa u|) + \int_\Omega \phi'_\infty \, d|D^su|.
\]
\end{thm}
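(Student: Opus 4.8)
The plan is to combine Propositions~\ref{prop:singularPart} and~\ref{prop:exactFormulaAC} after rewriting $\rho_{V,\phi}(u)$ so that the absolutely continuous and the singular part of $Du$ enter separately. Since $u\in\BV(\Omega)$, integration by parts and the Lebesgue decomposition give $\int_\Omega u\div w\,dx=-\int_\Omega w\cdot\nablaa u\,dx-\int_\Omega w\cdot dD^su$ for every $w\in C^1_0(\Omega;\Rn)$; replacing $w$ by $-w$ and invoking Remark~\ref{rem:restrictedTestFunction} we obtain
\[
\rho_{V,\phi}(u)=\sup_{w\in T^\phi}\Big(\int_\Omega\nablaa u\cdot w\,dx+\int_\Omega w\cdot dD^su-\rho_{\phi^*}(|w|)\Big).
\]
The upper bound is then immediate: bounding the bracket by $\big(\int_\Omega\nablaa u\cdot w-\phi^*(x,|w|)\,dx\big)+\int_\Omega w\cdot dD^su$ and taking suprema separately, Propositions~\ref{prop:exactFormulaAC} and~\ref{prop:singularPart} give $\rho_{V,\phi}(u)\le\rho_\phi(|\nablaa u|)+\int_\Omega\phi'_\infty\,d|D^su|$.

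For the lower bound the obstacle is that the near-optimal test functions furnished by the two propositions are unrelated, so the cross terms $\int\nablaa u\cdot(\text{singular test fn})$, $\int(\text{a.c.\ test fn})\cdot dD^su$ and the mixed contribution to $\rho_{\phi^*}$ must be neutralized. I would exploit the mutual singularity of $D^au$ and $D^su$: $D^su$ is carried by a Lebesgue-null Borel set $N$, while $\nablaa u\in L^1(\Omega)$. Fix $\delta>0$ and choose $w^{ac}\in T^\phi$ almost realizing the supremum in Proposition~\ref{prop:exactFormulaAC} and $w^s\in T^\phi$ almost realizing the one in Proposition~\ref{prop:singularPart}; by that construction $w^s$ may be taken with $|w^s|\le h_k\le\phi'_\infty$ for a fixed $k$, so $\rho_{\phi^*}(|w^s|)\le\rho_\phi(k)<\infty$, and by \azero{} and \adec{} the function $\phi(\cdot,k)$ is bounded on $\Omega$. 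With $w^{ac},w^s$ (hence $M:=\|w^{ac}\|_\infty\vee\|w^s\|_\infty$ and $\phi(\cdot,k)$) now \emph{fixed}, use inner regularity of $|D^su|$ to pick a compact $K\subset N$ with $M\,|D^su|(\Omega\setminus K)<\delta$ (and $\int_{\Omega\setminus K}\phi'_\infty\,d|D^su|<\delta$ in the finite case), then an open $G\supset K$ with $M\int_G|\nablaa u|\,dx<\delta$ and $\int_G\phi(x,k)\,dx<\delta$, and finally cutoffs $\chi'\in C^1_0(G)$ with $\chi'\equiv1$ near $K$ and $\chi\in C^1_0(\Omega)$ with $0\le\chi\le1$, $\chi\equiv0$ near $\operatorname{supp}\chi'$, and $\chi\equiv1$ off $G$. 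Set $w:=\chi w^{ac}+\chi'w^s$; the two summands have disjoint supports, whence $w\in T^\phi$.

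It remains to estimate $\int_\Omega\nablaa u\cdot w\,dx+\int_\Omega w\cdot dD^su-\rho_{\phi^*}(|w|)$ from below. Disjointness of supports and $\phi^*(x,0)=0$ give $\rho_{\phi^*}(|w|)=\rho_{\phi^*}(\chi|w^{ac}|)+\rho_{\phi^*}(\chi'|w^s|)$, and monotonicity of $\phi^*$ together with the choice of $G$ yields $\rho_{\phi^*}(\chi|w^{ac}|)\le\rho_{\phi^*}(|w^{ac}|)$ and $\rho_{\phi^*}(\chi'|w^s|)\le\int_G\phi(x,k)\,dx<\delta$, using $\phi^*(\cdot,h_k)\le\phi(\cdot,k)$ from the proof of Proposition~\ref{prop:singularPart}. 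Since $\{\chi<1\}\subset G$, $\{\chi'<1\}\subset\Omega\setminus K$ and $\chi$ vanishes near $K$ while $|D^su|(N\setminus G)\le|D^su|(\Omega\setminus K)$ is tiny, the choices of $G$ and $K$ make $\int_\Omega\nablaa u\cdot\chi w^{ac}\,dx\ge\int_\Omega\nablaa u\cdot w^{ac}\,dx-\delta$, $|\int_\Omega\nablaa u\cdot\chi'w^s\,dx|<\delta$, $\int_\Omega\chi'w^s\cdot dD^su\ge\int_\Omega w^s\cdot dD^su-2\delta$ and $|\int_\Omega\chi w^{ac}\cdot dD^su|<\delta$. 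Collecting these with the near-optimality of $w^{ac}$ and $w^s$ gives $\rho_{V,\phi}(u)\ge\rho_\phi(|\nablaa u|)+\int_\Omega\phi'_\infty\,d|D^su|-c\delta$, and $\delta\to0^+$ finishes the finite case. If $\rho_\phi(|\nablaa u|)=\infty$ or $\int_\Omega\phi'_\infty\,d|D^su|=\infty$, the same construction with the relevant proposition used in its ``$=\infty$'' form (replacing the target value by $\tfrac1\delta$) yields $\rho_{V,\phi}(u)\ge\tfrac1\delta-c\delta\to\infty$, matching the right-hand side.

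The hard part is exactly this gluing: producing one admissible $w\in T^\phi$ that simultaneously extracts $\rho_\phi(|\nablaa u|)$ from the absolutely continuous part and $\int_\Omega\phi'_\infty\,d|D^su|$ from the singular part, while preventing $\rho_{\phi^*}(|w|)$ from acquiring an uncontrolled contribution on the small but non-trivial neighbourhood $G$ of the singular set. The non-doubling nature of $\phi^*$ is what rules out the usual density and mollification shortcuts and forces the careful ordering of the choices of $w^{ac},w^s$, then $K$, then $G$.
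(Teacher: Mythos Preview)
Your proof is correct and follows essentially the same strategy as the paper: write $\rho_{V,\phi}(u)$ via integration by parts, obtain the upper bound by splitting the supremum, and for the lower bound glue near-optimal test functions for the absolutely continuous and singular parts by localizing on a small-measure neighborhood of the Lebesgue-null carrier of $D^su$. The only differences are cosmetic---the paper uses a single cutoff $\theta$ and the convex combination $\theta w_2+(1-\theta)w_1$ (controlling $\phi^*(\cdot,|w_\epsilon|)$ via $\phi^*(\cdot,|w_1|)+\phi^*(\cdot,|w_2|)$ and absolute continuity on the transition annulus), while you use two cutoffs with disjoint supports so that $\rho_{\phi^*}(|w|)$ splits exactly; also note a small slip: your $\chi$ cannot be in $C^1_0(\Omega)$ if $\chi\equiv 1$ off $G$, but only $\chi\in C^1(\Rn)$ is needed since $w^{ac}\in C^1_0(\Omega;\Rn)$ already forces $\chi w^{ac}\in C^1_0(\Omega;\Rn)$.
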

\begin{proof}
Since $Du= D^a u + D^su$, integration by parts implies that 
\[
- \int_\Omega u \div w\, dx 
=
\int_\Omega w\cdot dDu
= 
\int_\Omega \nablaa u\cdot w\, dx + \int_\Omega w \cdot dD^su
\]
for $w\in T^\phi$. 
Hence the claim follows from Propositions~\ref{prop:singularPart} and \ref{prop:exactFormulaAC}
once we prove that
\begin{align*}
&\sup_{w\in T^\phi} \bigg[\int_\Omega \nablaa u\cdot w - \phi^*(x, |w|)\, dx 
+
\int_\Omega w \cdot dD^su\bigg]\\
&\qquad=
\sup_{w\in T^\phi} \int_\Omega \nablaa u\cdot w - \phi^*(x, |w|)\, dx 
+
\sup_{w\in T^\phi} \int_\Omega w \cdot dD^su. 
\end{align*}
The inequality ``$\le$'' is clear, so we focus on the opposite one. 

We assume first that $\rho_\phi(|\nablaa u|) + \int_\Omega \phi'_\infty \, d|D^su|<\infty$ and fix $\epsilon>0$. 
By the definition of supremum we can choose $w_1, w_2\in T^\phi$ such that 
\[
\sup_{w\in T^\phi} \int_\Omega \nablaa u\cdot w - \phi^*(x, |w|)\, dx
\le 
\int_\Omega \nablaa u\cdot w_1 - \phi^*(x, |w_1|)\, dx + \epsilon < \infty
\]
and
\[
\sup_{w\in T^\phi} \int_\Omega w \cdot dD^su
\le 
\int_\Omega w_2 \cdot dD^su + \epsilon< \infty.
\]
Since $u \in \BV(\Omega)$ and $w_i\in T^\phi$, 
we have $|\nablaa u |\,| w_i| \in L^1(\Omega)$ and $\rho_{\phi^*}(|w_i|)<\infty$.
Thus, by the absolute continuity of the integral, we find $\delta>0$ such that 
\[
\bigg|\int_{\Omega\setminus \Omega_1} \nablaa u \cdot w_i-\phi^*(x, |w_i|)\, dx \bigg|
\le 
\int_{\Omega\setminus \Omega_1} |\nablaa u|\,|w_i|+\phi^*(x, |w_i|)\, dx 
\le 
\epsilon
\]
for $i\in \{1,2\}$ and any $\Omega_1\subset \Omega$ with $|\Omega\setminus \Omega_1|<\delta$ and 
\[
\bigg|\int_{\Omega\setminus \Omega_2} w_2\cdot dD^su \bigg| 
\le 
\int_{\Omega\setminus \Omega_2} \phi'_\infty\, d|D^su|
\le 
\epsilon
\]
for any $\Omega_2\subset \Omega$ with $|D^su|(\Omega\setminus \Omega_2)<\delta$.

Since $\supp D^su$ has Lebesgue measure zero, we can find a finite collection of open rectangles 
$Q_i\subset\Omega$ with 
$|D^su|(\bigcup Q_i) > |D^su|(\Omega)-\delta$ and $|\bigcup 2Q_i|<\delta$. Then we choose $\theta\in C^1_0(\Omega)$ 
with $0\le \theta\le 1$, $\theta=1$ in $\Omega_2:=\bigcup Q_i$ 
and $\theta=0$ in $\Omega_1:=\Omega\setminus\bigcup 2Q_i$. 
Let $w_\epsilon:=\theta w_2 + (1-\theta)w_1\in C^1_0(\Omega; \Rn)$. 
Since $w_\epsilon$ is a pointwise convex combination, 
\[
 \phi^*(\cdot, |w_\epsilon|) \le \phi^*(\cdot, \max\{|w_2|, |w_1|\})
\le  \phi^*(\cdot, |w_2|) + \phi^*(\cdot, |w_1|).
\] 
This yields that   $\rho_{\phi^*}(|w_\epsilon|) \le \rho_{\phi^*}(|w_2|) + \rho_{\phi^*}(|w_1|)< \infty$, 
and so $w_\epsilon \in T^\phi$. By Lemma~\ref{lem:bound}, $|w_\epsilon|\le \phi'_\infty$. We obtain that
\begin{align*}
\rho_{V,\phi}(u) 
&\ge
\int_\Omega \nablaa u\cdot w_\epsilon - \phi^*(x, |w_\epsilon|)\, dx + \int_\Omega w_\epsilon \cdot dD^su \\
& \ge 
\int_{\Omega_1} \nablaa u\cdot w_1 - \phi^*(x, |w_1|)\, dx 
+ \int_{\Omega_2} w_2 \cdot dD^su - c_\theta \\
& \ge 
\int_{\Omega} \nablaa u\cdot w_1 - \phi^*(x, |w_1|)\, dx 
+ \int_{\Omega} w_2 \cdot dD^su - 5\epsilon,
\end{align*}
where 
\begin{align*}
c_\theta
&:=
\int_{\Omega\setminus\Omega_1} |\nablaa u|\,|w_\epsilon| + \phi^*(x, |w_\epsilon|)\, dx 
+ \int_{\Omega\setminus\Omega_2} |w_\epsilon|\,  d|D^su| 
\le 
3\epsilon
\end{align*}
by the absolute integrability assumptions. 
By the choice of $w_1$ and $w_2$,  
\[
\rho_{V,\phi}(u) 
\ge 
\sup_{w\in T^\phi} \int_\Omega \nablaa u\cdot w - \phi^*(x, |w|)\, dx 
+
\sup_{w\in T^\phi} \int_\Omega w \cdot dD^su - 7\epsilon. 
\]
The lower bound follows as $\epsilon\to 0^+$. This concludes the 
proof in the case $\rho_\phi(|\nablaa u|) + \int_\Omega \phi'_\infty \, d|D^su|<\infty$.

When $\rho_\phi(|\nablaa u|)=\infty$ and $\int_\Omega \phi'_\infty \, d|D^su|<\infty$, 
we estimate 
\begin{align*}
&\sup_{w\in T^\phi} \bigg[\int_\Omega \nablaa u\cdot w - \phi^*(x, |w|)\, dx 
+
\int_\Omega w \cdot dD^su\bigg]\\
&\qquad\ge
\sup_{w\in T^\phi} \int_\Omega \nablaa u\cdot w - \phi^*(x, |w|)\, dx 
-
\sup_{w\in T^\phi} \int_\Omega w \cdot dD^su \\
&\qquad=
\rho_\phi(|\nablaa u|)-\int_\Omega \phi'_\infty \, d|D^su| = \infty. 
\end{align*}

Only the case $\int_\Omega \phi'_\infty \, d|D^su|=\infty$ remains. By the proof of 
Proposition~\ref{prop:singularPart}, there exists $w_\epsilon\in C^1_0(\Omega; \Rn)$ with 
\[
\int_\Omega w_\epsilon \cdot dD^su > \frac1\epsilon
\]
and $|w_\epsilon|\le \frac{\phi(\cdot, k)}k$ for some $k=k_\epsilon$. 
For any $\theta:\Omega\to [0,1]$, we find that 
\[
\begin{split}
\nablaa u \cdot (\theta w_\epsilon) - \phi^*(\cdot, |\theta w_\epsilon|)
&\ge \nablaa u \cdot (\theta w_\epsilon) - \phi^* \Big(\cdot, \frac{\phi(\cdot, k)}k\Big)
\ge -\Big(\frac{\phi^+(k)}k \,|\nablaa u| + \phi^+(k) \Big).
\end{split}
\] 
Since the function on the right-hand side is integrable, we can choose $\delta_k>0$ such that 
its integral over any measurable $A$ with $|A|<\delta_k$ is at least $-1$. 
Furthermore, since $\supp D^su$ has measure zero, we can choose $\theta\in C^\infty_0(\Omega)$ as 
before to have support with Lebesgue measure at most $\delta_k$ and satisfy 
\[
\int_\Omega \theta w_\epsilon \cdot dD^su > 
\frac1{2}\int_\Omega w_\epsilon \cdot dD^su > \frac1{2\epsilon}. 
\] 
Then 
\begin{align*}
&\sup_{w\in T^\phi} \bigg[\int_\Omega \nablaa u\cdot w - \phi^*(x, |w|)\, dx 
+
\int_\Omega w \cdot dD^su\bigg]\\
&\qquad \ge
\int_\Omega \theta w_\epsilon \cdot dD^su 
+ \int_\Omega \nablaa u\cdot (\theta w_\epsilon) - \phi^*(x, |\theta w_\epsilon|)\, dx 
\ge
\frac1{2\epsilon} - 1.
\end{align*}
When $\epsilon \to\infty$, the claim follows in this case also. 
\end{proof}

As a special case we obtain the following result in Orlicz spaces. Now the recession 
function is just a constant, either finite or infinite. 
As can be seen, we do not obtain any new spaces in this case, only the classical 
$\BV$-space or the regular Sobolev space. 

\begin{cor}\label{cor:Orlicz}
Let $\phi\in \Phic$ be independent of $x$ and satisfy \adec{}. 
If $u\in \BV(\Omega)$, then 
$\rho_{V,\phi}(u) = \rho_\phi(|\nablaa u|) + \phi'_\infty\, |D^su|(\Omega)$ and so 
\begin{enumerate}
\item
$\BV^\phi(\Omega)=\BV(\Omega)$ if $\phi'_\infty < \infty$;
\item
$\BV^\phi(\Omega)=W^{1,\phi}(\Omega)$ if $\phi'_\infty = \infty$.
\end{enumerate}
\end{cor}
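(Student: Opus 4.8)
The plan is to derive the displayed identity from Theorem~\ref{thm:exactFormula} and then read off the two special cases. The first task is to verify that an autonomous $\phi\in\Phic$ satisfying \adec{} meets all the hypotheses of Theorem~\ref{thm:exactFormula}. The condition \aone{} is immediate with constant $\beta=1$, since $\phi(\beta t)=\phi(t)\le\phi(t)+1$; and restricted \VA{} then comes for free, because either $\phi'_\infty=\infty$ (a constant), so the set of pairs $x,y$ over which the extra inequality is required is empty, or $\phi'_\infty<\infty$, in which case that inequality reads $\phi\big(\tfrac t{1+\omega(r)}\big)\le\phi(t)+\omega(r)$ and holds for any modulus of continuity $\omega$ by monotonicity of $\phi$. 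It remains to extract \azero{} and joint continuity from \adec{}. Using \adec{q} one sees that $\phi$ is real-valued and $\phi(1)\in(0,\infty)$: if $\phi(t_0)=\infty$ for some $t_0>0$ then $\phi\equiv\infty$ on $(0,t_0)$, contradicting $\lim_{t\to0^+}\phi(t)=0$; if $\phi(1)=0$ then convexity and $\phi(0)=0$ force $\phi\equiv0$ on $[0,1]$ and hence, by \adec{q} with $r=1$, $\phi\equiv0$, contradicting $\lim_{t\to\infty}\phi(t)=\infty$. Being real-valued and convex, $\phi$ is continuous on $(0,\infty)$, and continuous at $0$ by the $\Phiw$ axioms, so $\phi\in C(\Omega\times[0,\infty))$; and $\phi(\cdot,1)\approx1$ together with \adec{} gives \azero{} by the equivalence recalled in Section~\ref{sect:auxiliary}. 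Theorem~\ref{thm:exactFormula} now applies, and since $\phi'_\infty$ is a constant, $\int_\Omega\phi'_\infty\,d|D^su|=\phi'_\infty\,|D^su|(\Omega)$ with the convention $\infty\cdot0=0$, which is the stated formula.

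For part (1) assume $\phi'_\infty<\infty$. Then $\phi(t)\le\phi'_\infty t$ for all $t$, so $\rho_\phi(v)\le\phi'_\infty\|v\|_{L^1}$; combined with the embedding $L^\phi(\Omega)\hookrightarrow L^1(\Omega)$ (valid under \azero{} for bounded $\Omega$) this gives $L^\phi(\Omega)=L^1(\Omega)$ with comparable norms. Moreover $\phi(s)\le\phi'_\infty s$ forces $\phi^*(t)=\infty$ for $t>\phi'_\infty$, so any $w\in C^1_0(\Omega;\Rn)$ with $\|w\|_{\phi^*}\le1$ satisfies $\rho_{\phi^*}(|w|)\le1$, hence $|w|\le\phi'_\infty$ a.e.\ and, by continuity of $w$ and constancy of $\phi'_\infty$, $\|w\|_\infty\le\phi'_\infty$ everywhere. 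For $u\in\BV(\Omega)$, integration by parts gives $\int_\Omega u\div w\,dx=-\int_\Omega w\cdot dDu$, so $V_\phi(u)\le\phi'_\infty|Du|(\Omega)<\infty$; together with $u\in L^\phi(\Omega)$ this shows $\BV(\Omega)\subset\BV^\phi(\Omega)$. The reverse inclusion $\BV^\phi(\Omega)\hookrightarrow\BV(\Omega)$ was already noted, and the norms are comparable because $V(u,\Omega)\le cV_\phi(u)$, so $\BV^\phi(\Omega)=\BV(\Omega)$.

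For part (2) assume $\phi'_\infty=\infty$. If $u\in W^{1,\phi}(\Omega)$ then $u\in W^{1,1}(\Omega)\subset\BV(\Omega)$ with $D^su=0$ and $\nablaa u=\nabla u$, so the formula gives $\rho_{V,\phi}(u)=\rho_\phi(|\nabla u|)$ (the singular term vanishing by $\infty\cdot0=0$), while Theorem~\ref{thm:BV-gradient}(2)---applicable since $\phi$ satisfies \azero{}, \aone{} and \adec{}---gives $V_\phi(u)\approx\|\nabla u\|_\phi<\infty$, so $u\in\BV^\phi(\Omega)$. Conversely, if $u\in\BV^\phi(\Omega)$ then $u\in\BV(\Omega)$, and Lemma~\ref{lem:equivalence} yields $\|u\|_{\rho_{V,\phi}}\le V_\phi(u)<\infty$, so $\rho_{V,\phi}(u/\lambda)\le1$ for some $\lambda>0$. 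By the formula applied to $u/\lambda\in\BV(\Omega)$, $\rho_{V,\phi}(u/\lambda)=\rho_\phi(|\nablaa u|/\lambda)+\phi'_\infty|D^su|(\Omega)/\lambda$, and finiteness with $\phi'_\infty=\infty$ forces $|D^su|(\Omega)=0$. Hence $Du$ is absolutely continuous, i.e.\ $u\in W^{1,1}(\Omega)$ with $\nabla u=\nablaa u$, and $\rho_\phi(|\nabla u|/\lambda)<\infty$ gives $|\nabla u|\in L^\phi(\Omega)$; since also $u\in L^\phi(\Omega)$ we obtain $u\in W^{1,\phi}(\Omega)$, with comparable norms again by Theorem~\ref{thm:BV-gradient}(2). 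Thus $\BV^\phi(\Omega)=W^{1,\phi}(\Omega)$.

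The only real subtlety is the first step: one must check that \adec{} on its own---without a priori \azero{} or finiteness of $\phi$---already pins $\phi$ down to a continuous, everywhere-finite convex $\Phi$-function obeying \azero{}, and that the delicate restricted \VA{} condition trivializes in the autonomous setting. Once that bookkeeping is done, both inclusions in (1) and (2) are short, combining the formula with the earlier embedding and duality facts.
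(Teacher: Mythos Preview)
Your proof is correct and follows exactly the approach the paper intends: the corollary is stated without proof because it is meant as an immediate specialization of Theorem~\ref{thm:exactFormula}, and you have carefully verified that the autonomous assumption together with \adec{} forces all the remaining hypotheses (\azero{}, continuity, restricted \VA{}) to hold trivially, then unpacked the set equalities in (1) and (2) using the norm comparisons already established in Section~\ref{sect:basic} and Theorem~\ref{thm:BV-gradient}(2). The only thing you have added beyond the paper is the explicit bookkeeping, which is sound.
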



\section{Precise approximation and \texorpdfstring{$\Gamma$}{Gamma}-convergence}
\label{sect:Gamma}

We can now prove a precise approximation lemma for the modular using the formula 
for $\rho_{V, \phi}$ from the previous section. In contrast to 
Lemma~\ref{lem:density} which provides only approximate equality of the limit we 
here obtain that the limit exactly equals $\rho_{V,\phi}(u)$, under appropriately stronger 
assumptions  on $\phi$. This is critical for $\Gamma$-convergence. 
A similar argument should also work for $V_\phi$ with the same assumptions. 

Note that we assume \VA{} for $\phi^*$, not only its restricted version. 
This is used for Young's convolution inequality. In \cite[Lemma~4.1.7]{HarH19} it was shown that 
\aone{} of $\phi$ and $\phi^*$ are equivalent provided \azero{} holds; the corresponding 
statement is not known for \VA{}.

\begin{prop}[Modular approximation by smooth functions]\label{prop:modularDensity}
Let $\phi\in \Phic(\Omega) \cap C(\Omega\times [0,\infty))$ satisfy \azero{}, \adec{} and restricted \VA{} 
and assume that $\phi^*$ satisfies \VA{}. 
For every $u \in L^\phi(\Omega)$ there exist $u_k \in C^\infty(\Omega)$ such that 
\[
u_k \to u \text{ in }L^\phi(\Omega)
\quad\text{and}\quad
\rho_{V,\phi}(u) = \lim_{k \to \infty} \rho_\phi(|\nabla u_k|).
\]
If additionally $u \in L^2(\Omega)$, then the sequence can be chosen with
$u_k \to u$ in $L^2(\Omega)$ as well.
\end{prop}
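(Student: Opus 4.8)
The plan is to exhibit one sequence that works: take $u_k:=c_k\,(u*\eta_{1/k})$, where $u*\eta_{1/k}$ denotes the boundary‑adapted mollification built from the partition of unity of Lemma~\ref{lem:density} (so $u_k\in C^\infty(\Omega)$, $u_k\to u$ in $L^\phi(\Omega)$, and also in $L^2(\Omega)$ if $u\in L^2(\Omega)$, provided one adds the usual $L^2$‑closeness requirement to the choice of mollification radii), and $c_k\nearrow1$ is a scaling factor chosen just below $\tfrac1{1+\omega^*(1/k)}$ with $\omega^*$ the modulus of continuity from Young's convolution inequality (Corollary~\ref{cor:convolution}) for $\phi^*$. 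Each $u_k$ lies in $W^{1,1}(\Omega)$, hence $D^su_k=0$, and Proposition~\ref{prop:exactFormulaAC} (together with \eqref{eq:integrationByParts} and the evenness of $\phi^*$) yields the identity $\rho_\phi(|\nabla u_k|)=\rho_{V,\phi}(u_k)$. Thus the bound $\rho_{V,\phi}(u)\le\liminf_k\rho_\phi(|\nabla u_k|)$ is immediate from the weak lower semicontinuity of $\rho_{V,\phi}$ (Lemma~\ref{lem:sequence-in-BV}, since $u_k\to u$ in $L^\phi(\Omega)\hookrightarrow L^1(\Omega)$), and everything reduces to proving $\limsup_k\rho_\phi(|\nabla u_k|)\le\rho_{V,\phi}(u)$. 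When $\rho_{V,\phi}(u)=\infty$ this is trivial, so we may assume $\rho_{V,\phi}(u)<\infty$; testing $\rho_{V,\phi}$ against $\beta w$ with $|w|\le1$ and $\beta$ from \azero{} for $\phi^*$ then forces $u\in\BV(\Omega)$, whence Theorem~\ref{thm:exactFormula} gives $\rho_{V,\phi}(u)=\rho_\phi(|\nablaa u|)+\int_\Omega\phi'_\infty\,d|D^su|$ with both terms finite.

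For the upper bound I would first upgrade Corollary~\ref{cor:convolution} for $\phi^*$ to a sharp convolution inequality \emph{for $\phi$}: dualizing $\rho_\phi(f)=\sup_g\big(\int_\Omega fg\,dx-\rho_{\phi^*}(g)\big)$ (valid as $\phi^{**}=\phi$) and substituting $h=\tfrac1{1+\omega^*(\delta)}g*\eta_\delta$ turns $\rho_{\phi^*}(\tfrac1{1+\omega^*(\delta)}g*\eta_\delta)\le\rho_{\phi^*}(g)+\omega^*(\delta)$ into $\rho_\phi\big(\tfrac1{1+\omega^*(\delta)}f*\eta_\delta\big)\le\rho_\phi(f)+\omega^*(\delta)$, and, by Jensen's inequality (Theorem~\ref{thm:jensen}), the same for the slightly inexact partition‑of‑unity kernel. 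Writing $\nabla u_k=c_kB_k+c_kR_k$, where $B_k$ is the partition‑of‑unity mollification of the measure $Du$ and $R_k$ collects the commutators $\eta_{\epsilon_j}*(u\nabla\xi_j)-u\nabla\xi_j$, I choose the radii $\epsilon_j$ so small that $k^{q-1}\rho_\phi(|R_k|)\to0$ (possible since $\phi$ is doubling by \adec{}, with exponent $q$, and $C^\infty$ is modularly dense in $L^\phi$, \cite[Theorem~4.4.7]{HarH19}). Since $\rho_\phi$ is additive over domain decompositions, a convexity splitting with weights $\theta_k=1-\tfrac1k$ reduces the task to estimating $\int_\Omega\phi\big(x,\tfrac1{1+\omega^*(1/k)}|B_k|\big)\,dx$, the $R_k$‑contribution being $o(1)$.

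I would then split this integral over $\Omega=(\Omega\setminus N_k)\cup N_k$, where $N_k$ is a shrinking neighborhood of a compact Lebesgue‑null set which carries, up to arbitrarily small mass and energy, all of $|D^su|$ and on which $\phi'_\infty\le M$ (such a set exists because $\int_\Omega\phi'_\infty\,d|D^su|<\infty$ concentrates $|D^su|$ on $\{\phi'_\infty<\infty\}$, combined with the covering argument from the proof of Theorem~\ref{thm:exactFormula}). On $\Omega\setminus N_k$ the mollified singular part does not reach, so $|B_k|$ is dominated there by a mollification of $|\nablaa u|$, and the sharp convolution inequality gives $\int_{\Omega\setminus N_k}\phi\big(x,\tfrac1{1+\omega^*(1/k)}|B_k|\big)\,dx\le\rho_\phi(|\nablaa u|)+\omega^*(1/k)$. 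On $N_k$ I would use $\phi(x,t)\le\phi'_\infty(x)\,t$ together with restricted \VA{}: every $x\in N_k$ lies within $1/k$ of a point $y$ of that compact set with $\phi'_\infty(y)\le M$, and restricted \VA{} (applicable in the scale range $t\sim|x-y|^{-n}$ reached by the mollified measure) controls $\phi(x,\cdot)$, hence $\phi'_\infty(x)$ up to a factor $1+o(1)$, by $\phi(y,\cdot)$; a Fubini computation of $\int_{N_k}\phi'_\infty(y(x))\,(|D^su|\text{-mollification})(x)\,dx$, again using restricted \VA{} (equivalently strong $\log$‑Hölder continuity, Proposition~\ref{prop:strong}) to pass from $\phi'_\infty$ at the moving nearest point $y(x)$ to $\phi'_\infty$ on $\supp D^su$, yields $\int_{N_k}\phi(x,|\nabla u_k|)\,dx\le(1+o(1))\int_\Omega\phi'_\infty\,d|D^su|+o(1)$ (the absolutely continuous part over the small‑measure set $N_k$ is $o(1)$ by absolute continuity of $A\mapsto\int_A\phi(x,|\nablaa u|)\,dx$). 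Summing the two pieces, letting $k\to\infty$ and then removing the truncation gives $\limsup_k\rho_\phi(|\nabla u_k|)\le\rho_\phi(|\nablaa u|)+\int_\Omega\phi'_\infty\,d|D^su|=\rho_{V,\phi}(u)$, and the $L^2$ statement follows from the choice of radii exactly as in Lemma~\ref{lem:density}.

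The hard part will be the sharp treatment of the singular part on $N_k$: the naive bound $\phi(x,t)\le Mt$ only produces $M\,|D^su|(\Omega)$, so reproducing the weight $\phi'_\infty$ forces one to transport the values of the possibly discontinuous function $\phi'_\infty$ from the moving nearest point $y(x)$ onto $\supp D^su$ through the restricted \VA{} inequality, precisely in the range of scales where that inequality still applies — this is the analogue of, and is more delicate than, the corresponding step in Proposition~\ref{prop:singularPart}. A secondary nuisance is the bookkeeping of the $1+o(1)$ and $o(1)$ errors in the non‑doubling regime: since $\phi^*$ is not doubling, small additive perturbations of $|\nabla u_k|$ cannot be absorbed into a modular up to a multiplicative constant, which is why the commutator terms $R_k$ must be peeled off by the convexity argument with weights tending to $1$ rather than by a quasi‑triangle inequality, and why the scaling factor $c_k$ has to be threaded through carefully.
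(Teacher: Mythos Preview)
Your approach is genuinely different from the paper's, and the difference is instructive. You attempt a \emph{direct} estimate of $\rho_\phi(|\nabla u_k|)$: dualize Young's convolution inequality to obtain a sharp inequality for $\phi$, split $\Omega$ into a neighborhood $N_k$ of the singular support and its complement, and on $N_k$ transport $\phi(x,t)$ to $\phi'_\infty(z)\,t$ via restricted \VA{}. The paper instead works by \emph{duality}: it never evaluates $\phi(x,|\nabla u_\epsilon|)$ on the singular set at all. It picks a near-optimal $w_\epsilon$ from Proposition~\ref{prop:exactFormulaAC} so that $\rho_\phi(|\nabla u_\epsilon|)\le (1+\epsilon)\big(\int\nabla u_\epsilon\cdot w_\epsilon-\rho_{\phi^*}(|w_\epsilon|)\big)$, moves the mollification from $u_\epsilon$ to $w_\epsilon$ by Fubini exactly as in the $I,II$ decomposition of Lemma~\ref{lem:density}, and then bounds $\int u\,\div(\xi_1(\eta_{\epsilon_1}*w_\epsilon))\le \rho_{V,\phi}((1+\omega)u)+\rho_{\phi^*}\big(\tfrac{\eta_{\epsilon_1}*|w_\epsilon|}{1+\omega}\big)$ straight from the \emph{definition} of $\rho_{V,\phi}$. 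Young's convolution inequality (Corollary~\ref{cor:convolution}) is then applied to $\phi^*$ --- which satisfies full \VA{} by hypothesis --- on the test-function side, giving $\rho_{\phi^*}(\tfrac{\eta_{\epsilon_1}*|w_\epsilon|}{1+\omega})\le\rho_{\phi^*}(|w_\epsilon|)+\omega(\epsilon_1)$; subtracting $\rho_{\phi^*}(|w_\epsilon|)$ yields the bound. The $k\ge 2$ boundary layers are absorbed into $V_\phi(u,\Omega\setminus\overline{U_1})$, made small by enlarging $U_1$ (here the explicit formula of Theorem~\ref{thm:exactFormula} is only used to see that $\rho_{V,\phi}(u)$ localized to $\Omega\setminus\overline{U_1}$ tends to zero). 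This buys a great deal: no domain splitting, no direct handling of the mollified singular measure, no transport of $\phi'_\infty$-values, and no need for a convolution inequality for $\phi$ itself.

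Your route can probably be pushed through, but two steps are not yet under control. First, your appeal to Theorem~\ref{thm:jensen} to pass from the pure convolution to the partition-of-unity kernel is not licensed: Theorem~\ref{thm:jensen} requires \VA{} for $\phi$, and you only have restricted \VA{} for $\phi$ (and \VA{} for $\phi^*$). The dualization trick you describe does give the sharp convolution inequality for $\phi$ and a single mollifier, but for $\sum_k\xi_k(\eta_{\epsilon_k}*\cdot)$ the Fubini step produces $\sum_k(\xi_k*\eta_{\epsilon_k})$ on the dual side, which is only $\le 3$, not $\le 1$; you would need to separate the interior term (single convolution) from the boundary layers as the paper does. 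Second, the singular-part estimate on $N_k$ is more delicate than your sketch suggests. Restricted \VA{} controls $\phi(x,t)$ by $\phi(z,t)$ only for $t$ with $\phi(z,t)\le K|x-z|^{-n}$, hence for $t\lesssim K/(\phi'_\infty(z)\delta^n)$; you must therefore truncate to $\{\phi'_\infty\le M\}$ to make this range uniform, and the leftover $|D^su|\lfloor\{\phi'_\infty>M\}$ --- though small in mass and in $\int\phi'_\infty\,d|D^su|$ --- still produces a mollified function whose $\rho_\phi$ is not obviously $o(1)$ without a further convexity splitting. Your intermediate ``nearest point'' $y(x)$ also overcomplicates things: restricted \VA{} does \emph{not} give $\phi'_\infty(y(x))\le(1+o(1))\phi'_\infty(z)$ (the admissible $t$-range is bounded, so you learn nothing about the $t\to\infty$ limit), and you should instead apply restricted \VA{} directly between $x$ and each $z\in\supp D^su$ inside a Jensen-plus-Fubini computation.
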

\begin{proof}
Since the case $\rho_{V,\phi}(u)=\infty$ is trivial, we may assume that $\rho_{V,\phi}(u)<\infty$.
Let $\epsilon \in (0,1)$. We define $\xi_k$, $\eta_{\epsilon_k}$, and $u_\epsilon$ as 
in the proof of Lemma~\ref{lem:density} so that $V_\phi(u_\epsilon)\lesssim V_\phi(u)$. It follows from \adec{q} that 
\[
\min\{ \|u\|_{\rho_{V,\phi}}, \|u\|_{\rho_{V,\phi}}^q\}
\lesssim
\rho_{V,\phi}(u) 
\lesssim
\max\{ \|u\|_{\rho_{V,\phi}}, \|u\|_{\rho_{V,\phi}}^q\}.
\]
Thus Lemma~\ref{lem:equivalence} and $V_\phi(u_\epsilon)\lesssim V_\phi(u)$ imply that 
$\rho_\phi(|\nabla u_\epsilon|)\lesssim \rho_{V,\phi}(u)^q+1$. 
From Theorem~\ref{thm:exactFormula} we see that $U_1$ can be chosen so large (by 
choosing $m$ large in Lemma~\ref{lem:density}) that $\rho_{V\setminus\overline{U_1},\phi}(u)<\epsilon$. 
Then $V_\phi(u, \Omega\setminus \overline{U_1})\lesssim \epsilon^{1/q}$.

Since $u_\epsilon\in C^\infty(\Omega)$, $\nablaa u_\epsilon=\nabla u_\epsilon$. 
By the proof of Proposition~\ref{prop:exactFormulaAC} with $|\nabla u_\epsilon|$ as $g$, there exists 
$w_\epsilon\in C^1_0(\Omega; \Rn)$ with 
$\phi^*(x, |w_\epsilon|)\lesssim \phi(x,|\nabla u_\epsilon|)$ and
\[
\int_\Omega \phi(x,|\nabla u_\epsilon|)\, dx 
\le
(1+\epsilon)\int_\Omega \nabla u_\epsilon\cdot w_\epsilon - \phi^*(x,|w_\epsilon|)\, dx.
\]
By \adec{} of $\phi$, Theorem~\ref{thm:exactFormula} and the estimates above, 
\[
\rho_{\phi^*}(|w_\epsilon|) \lesssim \rho_\phi(|\nabla u_\epsilon|) 
\lesssim \rho_{V,\phi}(u)^q+1.
\]
Thus $\|w_\epsilon\|_{\phi^*}\le c$. 
As in Lemma~\ref{lem:density}, we have 
\[
\begin{split}
\int_\Omega \nabla u_\epsilon \cdot w_\epsilon \, dx 
= \underbrace{\sum_{k=1}^\infty \int_\Omega u \div (\xi_k(\eta_{\epsilon_k}* w_\epsilon)) \, dx}_{=: I} - 
\underbrace{\sum_{k=1}^\infty \int_\Omega w_\epsilon \cdot (\eta_{\epsilon_k}*(u \nabla \xi_k) - (u\nabla \xi_k))\, 
dx}_{=: II}
\end{split}
\]
and the inequality $|II|\le c\epsilon$ again follows.

We divide the term $I$ into two parts.  Let $\omega$ be from Corollary~\ref{cor:convolution}. 
Using the definition of $\rho_{V,\phi}$ to the first part of $I$, and and estimating the second part of $I$ as in Lemma~\ref{lem:density}  
but now with a test-function supported in $\Omega\setminus \overline{U_1}$, we find that 
\[
\begin{split}
|I| & 
= 
\bigg| \int_\Omega u \div (\xi_1(\eta_{\epsilon_1}*w_\epsilon)) \, dx + 
\int_\Omega u \div \bigg( \sum_{k=2}^\infty  \xi_k(\eta_{\epsilon_k}*w_\epsilon)\bigg) \, dx \bigg|\\
&\le 
\rho_{V,\phi}\big((1+\omega(\epsilon_1)) u\big) + \rho_{\phi^*}\Big( \frac{\xi_1|\eta_{\epsilon_1}*w_\epsilon|}{1+\omega(\epsilon_1)}\Big) + 
cV_\phi(u, \Omega\setminus \overline{U_1}) \\
&\le 
\rho_{V,\phi}\big((1+\omega(\epsilon_1)) u\big) + \rho_{\phi^*}\Big( \frac{\eta_{\epsilon_1}*|w_\epsilon|}{1+\omega(\epsilon_1)}\Big) + 
cV_\phi(u, \Omega\setminus \overline{U_1})
\end{split}
\]
By Young's convolution inequality (Corollary~\ref{cor:convolution}), 
\[
\rho_{\phi^*}\Big( \frac{\eta_{\epsilon_1}*|w_\epsilon|}{1+\omega(\epsilon_1)}\Big) 
- \rho_{\phi^*}(|w_\epsilon|) 
\le \rho_{\phi^*}( |w_\epsilon|) + \omega(\epsilon_1) -\rho_{\phi^*}(|w_\epsilon|) 
\le \omega(\epsilon_1) 
\to 0
\]
as $\epsilon_1\to 0^+$. 
Combining the estimates, we obtain that 
\begin{align*}
\int_\Omega \phi(x,|\nabla u_\epsilon|)\, dx 
&\le
(1+\epsilon)\int_\Omega \nabla u_\epsilon\cdot w_\epsilon - \phi^*(x,|w_\epsilon|)\, dx\\
&\le
(1+\epsilon)(|I|-\rho_{\phi^*}(|w_\epsilon|)) + c\epsilon \\
&\le
(1+\epsilon)\rho_{V,\phi}\big((1+\omega(\epsilon_1)) u\big) + 
c(|\Omega|\,\omega(\epsilon_1)  + \epsilon^{1/q}). 
\end{align*}
By \cite[Lemma~2.2.6]{HarH19}, there exists a constant $q_2$ depending on $q$ such that 
$\rho_{V,\phi}\big((1+\omega(\epsilon_1)) u\big)\le (1+\omega(\epsilon_1))^{q_2}\rho_{V,\phi}(u)$.
As $\epsilon,\epsilon_1\to 0^+$, 
we obtain that $\limsup_{\epsilon\to 0^+}\rho_\phi(|\nabla u_\epsilon|)\le \rho_{V,\phi}(u)$.
The opposite inequality follows from Lemma~\ref{lem:sequence-in-BV} as in Lemma~\ref{lem:density}.
\end{proof}

In \cite{EleHH_pp}, we introduced abstract $\BV^\phi$-type spaces by a limit procedure. 
%
%
We use here the version with a fidelity term which is most relevant for image processing. 
For $p >1$ and for a given $f \in L^2(\Omega)$, we defined functionals $F_p: L^2(\Omega) \to [0, 
\infty]$ by
\begin{equation*}
F_p(u) := 
\begin{cases}
\displaystyle\int_\Omega \phi(x, |\nabla u|)^p + |u-f|^2 \, dx & \text{when } u \in L^{1, \phi^p}(\Omega); \\
\infty & \text{otherwise.}
\end{cases}
\end{equation*}
and the limit functional $F: L^2 (\Omega) \to [0, \infty]$ by
\begin{equation*} 
F(u) := 
\inf\bigg\{\liminf_{i\to\infty}\int_\Omega \phi(x, |\nabla u_k|) + |u_k-f|^2\, dx \,\Big|\, 
u_k \in L^{1, \phi} (\Omega) \cap L^2(\Omega)\text{ and } u_k\to u \text{ in } L^2(\Omega) \bigg\}
\end{equation*}
Note that the energy in $F_p$ satisfies \ainc{} and \adec{} so it can be studied in a reflexive space and 
it is easy to prove existence of minimizers among other things \cite{HarHK16}. 

We compare $F$ with the corresponding version of $\rho_{V, \varphi}$ including the 
fidelity term, namely 
\[
\rho_{V,\phi}^f(u):=
\rho_{V,\phi}(u)+\rho_2(u-f) 
= 
\sup\bigg\{ \int_\Omega u \div w - \phi^*(x, |w|)+|u-f|^2\, dx \,\Big|\, w \in C^1_0(\Omega; \Rn)\bigg\}.
\]

\begin{prop}
\label{comparison-modular}
Let $\phi\in \Phic(\Omega) \cap C(\Omega\times [0,\infty))$ satisfy \azero{}, \adec{} and restricted \VA{} 
and assume that $\phi^*$ satisfies \VA{}. 
Then $\rho_{V,\phi}^f(u) \le F(u)$ for all $u \in L^2(\Omega)$ and $\rho_{V,\phi}^f(u) = F(u)$
for all $u \in L^2(\Omega)\cap L^\phi(\Omega)$.
\end{prop}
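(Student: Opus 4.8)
The plan is to prove the two inequalities separately: the bound $\rho_{V,\phi}^f\le F$ holds on all of $L^2(\Omega)$ and is soft, while the reverse inequality on $L^2(\Omega)\cap L^\phi(\Omega)$ will be reduced to the precise approximation result Proposition~\ref{prop:modularDensity}, whose hypotheses coincide exactly with those assumed here.

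For the lower bound, fix $u\in L^2(\Omega)$ and let $(u_k)\subset L^{1,\phi}(\Omega)\cap L^2(\Omega)$ be an arbitrary sequence with $u_k\to u$ in $L^2(\Omega)$ (if there is no such sequence, then $F(u)=\infty$ and there is nothing to prove). Since $\Omega$ is bounded and $\phi$ satisfies \azero{}, both $L^2(\Omega)$ and $L^\phi(\Omega)$ embed into $L^1(\Omega)$, so each $u_k$ lies in $W^{1,1}(\Omega)$ and $u_k\to u$ also in $L^1(\Omega)$. For every $w\in T^\phi$, integration by parts and Young's inequality $st\le\phi(x,s)+\phi^*(x,t)$ give
\begin{align*}
\int_\Omega u_k\div w\,dx-\int_\Omega\phi^*(x,|w|)\,dx
&=-\int_\Omega\nabla u_k\cdot w\,dx-\int_\Omega\phi^*(x,|w|)\,dx\\
&\le\int_\Omega\phi(x,|\nabla u_k|)\,dx.
\end{align*}
Adding $\int_\Omega|u_k-f|^2\,dx$ to both sides and letting $k\to\infty$ — the left-hand side converges because $\div w\in L^\infty(\Omega)$, $u_k\to u$ in $L^1(\Omega)$, $u_k-f\to u-f$ in $L^2(\Omega)$, and $\rho_{\phi^*}(|w|)<\infty$ does not depend on $k$ — we obtain
\[
\liminf_{k\to\infty}\int_\Omega\phi(x,|\nabla u_k|)+|u_k-f|^2\,dx
\ge\int_\Omega u\div w-\phi^*(x,|w|)\,dx+\rho_2(u-f).
\]
Taking the supremum over $w\in T^\phi$, which equals $\rho_{V,\phi}(u)$ by Remark~\ref{rem:restrictedTestFunction}, and then the infimum over admissible sequences, yields $F(u)\ge\rho_{V,\phi}(u)+\rho_2(u-f)=\rho_{V,\phi}^f(u)$ for all $u\in L^2(\Omega)$.

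For the equality on $L^2(\Omega)\cap L^\phi(\Omega)$ it remains to establish $F(u)\le\rho_{V,\phi}^f(u)$; since $f\in L^2(\Omega)$ makes $\rho_2(u-f)$ finite, I may assume $\rho_{V,\phi}(u)<\infty$ (otherwise both sides are infinite by the lower bound). Then Proposition~\ref{prop:modularDensity} provides $u_k\in C^\infty(\Omega)$ with $u_k\to u$ in $L^\phi(\Omega)$ and in $L^2(\Omega)$ and $\rho_\phi(|\nabla u_k|)\to\rho_{V,\phi}(u)$. For large $k$ the quantity $\rho_\phi(|\nabla u_k|)$ is finite, hence $|\nabla u_k|\in L^\phi(\Omega)\subset L^1(\Omega)$ and $u_k\in L^2(\Omega)\subset L^1(\Omega)$, so $u_k\in W^{1,1}(\Omega)$ and therefore $u_k\in L^{1,\phi}(\Omega)\cap L^2(\Omega)$ is an admissible competitor in $F$. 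Since $u_k\to u$ in $L^2(\Omega)$ gives $\rho_2(u_k-f)\to\rho_2(u-f)$, the energies $\int_\Omega\phi(x,|\nabla u_k|)+|u_k-f|^2\,dx=\rho_\phi(|\nabla u_k|)+\rho_2(u_k-f)$ converge to $\rho_{V,\phi}(u)+\rho_2(u-f)=\rho_{V,\phi}^f(u)$, which bounds $F(u)$ from above and finishes the proof.

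The genuine content of the statement is absorbed by Proposition~\ref{prop:modularDensity}, i.e.\ in the non-doubling approximation estimates that rely on restricted \VA{} for $\phi$ and \VA{} for $\phi^*$; everything else is bookkeeping. The step that needs a little care is the passage to the limit in the lower bound: one must keep the test function $w$ fixed and exploit that restricting to $w\in T^\phi$ (Remark~\ref{rem:restrictedTestFunction}) is precisely what makes the non-doubling conjugate term $\rho_{\phi^*}(|w|)$ harmless, while the embedding of $L^2$ on a bounded domain into $L^1$ puts $u$ and all of the approximants into $L^1$ for free, so that $\int_\Omega u_k\div w\,dx$ passes to the limit.
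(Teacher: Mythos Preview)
Your proof is correct and follows essentially the same two-step strategy as the paper: the inequality $\rho_{V,\phi}^f\le F$ via Young's inequality and lower semicontinuity, and the reverse inequality via Proposition~\ref{prop:modularDensity}. The only cosmetic difference is that for the first inequality the paper invokes the weak lower semicontinuity Lemma~\ref{lem:sequence-in-BV} directly (yielding $\rho_{V,\phi}^f(u)\le\liminf_k\rho_{V,\phi}^f(u_k)$) and then applies Young's inequality to each $u_k$, whereas you unfold that lemma inline by fixing $w\in T^\phi$, passing to the limit, and taking the supremum afterwards; the content is identical.
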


\begin{proof}
Let us prove first that $\rho_{V,\phi}^f(u) \le F(u)$. 
We may assume $F(u) < \infty$ and consider functions $u_k \in L^{1, \phi}(\Omega) 
\cap L^2(\Omega)$ realizing the infimum from $F$ with $u_k \rightarrow u$ in $L^2(\Omega)$. 
Weak lower semicontinuity of $\rho_{V,\phi}$ (Lemma~\ref{lem:sequence-in-BV}) and in $L^2$ gives
\[
\rho_{V,\phi}^f(u) \le \liminf_{i\to \infty} \rho_{V,\phi}^f(u_k).
\]
By Young's inequality, $\rho_{V,\phi}^f(u_k) \le \rho_{\phi}(|\nabla u_k|) + \rho_2(u_k - f)$
so that
\[
\rho_{V,\phi}^f(u) \le \liminf_{i\to \infty} \big(\rho_{\phi}(|\nabla u_k|) + \rho_2(u_k - f) \big) = F(u).
\]
Thus the inequality is proved.

For the opposite inequality, $F(u) \le \,\rho_{V,\phi}^f(u)$, 
we may assume that $\rho_{V,\phi}^f(u) < \infty$.
By Proposition~\ref{prop:modularDensity}, there exist $u_k \in C^\infty(\Omega)$ such that 
$u_k \to u$ in $L^{\varphi}(\Omega)\cap L^2(\Omega)$ and
\[ 
\rho_{V,\phi}(u) = \lim_{i \to \infty} \rho_\phi(|\nabla u_k|).
\]
Since $\rho_\phi(|\nabla u_k|)<\infty$ and $u_k\in L^1(\Omega)$, we see that $u_k \in L^{1, \phi}(\Omega)$,
and so, by the definition of $F$, using the fact that the limit of the sum is the sum of the limits, 
we obtain that
\[
F(u)  \le  \liminf_{i \to \infty} \big(\rho_\phi(|\nabla u_k|)  + \rho_2(u_k - f) \big)
= \rho_{V,\phi}^f(u). \qedhere
\]
\end{proof}

The concept of $\Gamma$-convergence was introduced by De Giorgi and Franzoni \cite{DeGF75}, see also 
\cite{Bra02, Dal93}.
A family of functionals $F_p: L^2(\Omega) \to [0, \infty]$ is said to \textit{$\Gamma$-converge}
to $F: L^2(\Omega) \to [0, \infty]$ in $L^2(\Omega)$
if the following hold for every sequence $(p_k)$ converging to one from above:
\begin{enumerate}
 \item[(a)]  $\displaystyle F (u) \le \liminf_{i \to \infty} F_{p_k} (u_{i})$ 
for every $u \in L^2(\Omega)$ and every sequence with $u_{i}\to u$ in $L^2(\Omega)$;
 \item[(b)]
$\displaystyle F (u) \ge \limsup_{i \to \infty} F_{p_k} (u_{i})$
for every $u \in L^2(\Omega)$ and some sequence with $u_{i}\to u$ in $L^2(\Omega)$.
\end{enumerate}

We conclude by showing the $\Gamma$-convergence in the 
situation most relevant to image processing: convex planar domains. 
This allows us to simplify the assumptions.

\begin{cor}
Let $\Omega\subset\R^2$ be convex and let $\phi\in \Phic(\Omega)$ 
satisfy \azero{}, \adec{2} and \VA{} and assume that $\phi^*$ satisfies \VA{}. 
Then $F_p$ $\Gamma$-converges to $\rho_{V,\phi}^f$ in $L^2(\Omega)$.
\end{cor}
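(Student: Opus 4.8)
The plan is to derive the corollary by combining the $\Gamma$-convergence $F_p\to F$ from \cite{EleHH_pp} with the identification $F=\rho_{V,\phi}^f$, which is essentially Proposition~\ref{comparison-modular}, so the first step is to check that the present hypotheses imply those of Propositions~\ref{prop:modularDensity} and \ref{comparison-modular} (and of \cite{EleHH_pp}). Since $n=2$, \adec{2} is exactly \adec{n}, and in particular \adec{} holds. Because $\phi$ satisfies \adec{} it is finite-valued, hence continuous in $t$ by convexity; the modulus of continuity furnished by \VA{} then gives the continuity in $x$, uniformly on bounded $t$-ranges, so $\phi\in\Phic(\Omega)\cap C(\Omega\times[0,\infty))$. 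From \azero{}, \adec{} and \VA{} one recovers \aone{} on small balls and, $\Omega$ being bounded, on all of $\Omega$; together with the unrestricted inequality in \VA{} this gives restricted \VA{}. The hypothesis on $\phi^*$ is assumed outright, and a bounded convex domain is a John domain with Lipschitz boundary, as the auxiliary results require.

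The one genuinely new ingredient is the embedding $L^2(\Omega)\hookrightarrow L^\phi(\Omega)$, which lets us pass from $L^2(\Omega)\cap L^\phi(\Omega)$ to all of $L^2(\Omega)$. Indeed, \azero{} together with \adec{2} gives $\phi(\cdot,1)\approx 1$, hence $\phi(x,t)\lesssim 1+t^2$ uniformly in $x$; as $\Omega$ is bounded, $\rho_\phi(u)\lesssim|\Omega|+\|u\|_{L^2(\Omega)}^2<\infty$ for every $u\in L^2(\Omega)$, so $L^2(\Omega)\subset L^\phi(\Omega)$ and therefore $L^2(\Omega)\cap L^\phi(\Omega)=L^2(\Omega)$. (In $\R^2$ one also has $\BV(\Omega)\hookrightarrow L^2(\Omega)$, so that $\rho_{V,\phi}^f(u)<\infty$ already forces $u\in L^2(\Omega)$.) Consequently Proposition~\ref{comparison-modular} yields $F(u)=\rho_{V,\phi}^f(u)$ for every $u\in L^2(\Omega)$, and the $\Gamma$-convergence $F_p\to F$ transfers verbatim to $\rho_{V,\phi}^f$.

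To keep the argument self-contained I would also prove the $\Gamma$-convergence directly. Fix $p_i\to 1^+$. For the lower bound, let $u_i\to u$ in $L^2(\Omega)$ with $\sup_i F_{p_i}(u_i)<\infty$; then $u_i\in W^{1,1}(\Omega)$ and $u_i\to u$ in $L^1(\Omega)$, so Lemma~\ref{lem:sequence-in-BV} together with Young's inequality gives $\rho_{V,\phi}(u)\le\liminf_i\rho_{V,\phi}(u_i)\le\liminf_i\rho_\phi(|\nabla u_i|)$. The elementary inequality $s\le s^{p}+\delta_p$ for $s\ge 0$, with $\delta_p:=\sup_{s\in[0,1]}(s-s^{p})\to 0$ as $p\to 1^+$, yields $\rho_\phi(|\nabla u_i|)\le\rho_{\phi^{p_i}}(|\nabla u_i|)+|\Omega|\,\delta_{p_i}$; adding $\rho_2(u_i-f)\to\rho_2(u-f)$ gives $\rho_{V,\phi}^f(u)\le\liminf_i F_{p_i}(u_i)$. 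For the recovery sequence the case $\rho_{V,\phi}^f(u)=\infty$ is trivial, and if $\rho_{V,\phi}^f(u)<\infty$ then $u\in L^\phi(\Omega)$ by the embedding above, so Proposition~\ref{prop:modularDensity} provides $v_k\in C^\infty(\Omega)$ with $v_k\to u$ in $L^\phi(\Omega)\cap L^2(\Omega)$ and $\rho_\phi(|\nabla v_k|)\to\rho_{V,\phi}(u)$; a diagonal sequence $u_i:=v_{k(i)}$ with $k(i)\to\infty$ slowly then satisfies $u_i\to u$ in $L^2(\Omega)$ and $F_{p_i}(u_i)\to\rho_{V,\phi}^f(u)$.

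The step I expect to be the main obstacle is this last diagonalisation, i.e.\ verifying $\rho_{\phi^{p}}(|\nabla v_k|)\to\rho_\phi(|\nabla v_k|)$ as $p\to 1^+$ for each fixed $k$: since $\phi^*$ fails \adec{}, the usual density tools are unavailable, and at points where $\phi'_\infty=\infty$ the superlinear growth of $\phi$ may make $\phi(\cdot,|\nabla v_k|)^{p}$ non-integrable near $\partial\Omega$. Because only test functions in $C^1_0(\Omega;\Rn)$ enter $\rho_{V,\phi}$ and $F$, the boundary behaviour of the approximants is irrelevant, so I would first replace $u$ by a sub-unit mollification — controlling the change of energy by $\epsilon$ via Corollary~\ref{cor:convolution} and the formula in Theorem~\ref{thm:exactFormula} — after which the recovery functions can be chosen with $|\nabla v_k|\in L^\infty(\Omega)$; then $\max\{1,\phi(\cdot,\|\nabla v_k\|_{L^\infty(\Omega)})\}$ is integrable on the bounded domain $\Omega$ and dominated convergence applies. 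This mechanism is precisely what is already built into the proof of Proposition~\ref{prop:modularDensity} and into the $\Gamma$-convergence of \cite{EleHH_pp}, so invoking those statements is the cleaner route.
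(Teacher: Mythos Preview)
Your main route---invoke the $\Gamma$-convergence $F_p\to F$ from \cite{EleHH_pp} and then identify $F=\rho_{V,\phi}^f$ via Proposition~\ref{comparison-modular}, using \adec{2} to get $L^2(\Omega)\subset L^\phi(\Omega)$ so that the identification holds on all of $L^2(\Omega)$---is exactly the paper's argument, and your verification that \VA{} gives $\phi\in C(\Omega\times[0,\infty))$ and restricted \VA{} matches the paper as well. The one place where the paper is more explicit is the hypothesis check for \cite[Theorem~1.3(2)]{EleHH_pp}: that result requires density of $C^\infty(\overline\Omega)$ in $L^{1,\phi}(\Omega)\cap L^2(\Omega)$, and the paper establishes this via \cite[Corollary~4.6]{Juu_pp} (convex $\Rightarrow$ $(\epsilon,\delta)$-domain, bounded $\Rightarrow$ (A2), hence $C^\infty(\overline\Omega)$ dense in $W^{1,\phi}(\Omega)$) together with the chain $L^{1,\phi}(\Omega)\cap L^2(\Omega)\hookrightarrow W^{1,\phi}(\Omega)\hookrightarrow W^{1,1}(\Omega)\hookrightarrow L^2(\Omega)$ in dimension~2. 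Your remark that ``a bounded convex domain is a John domain with Lipschitz boundary'' gestures at this but does not supply it.

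Your self-contained alternative is not in the paper. The $\liminf$ part via $s\le s^p+\delta_p$ and Lemma~\ref{lem:sequence-in-BV} is fine. The obstacle you flag for the recovery sequence is real: the $v_k$ produced by Proposition~\ref{prop:modularDensity} are built from the partition-of-unity construction of Lemma~\ref{lem:density} and have no reason to have $|\nabla v_k|\in L^\infty(\Omega)$, so $\rho_{\phi^p}(|\nabla v_k|)$ need not be finite, let alone converge as $p\to 1^+$. Your proposed fix---premollify so that the approximants can be taken with bounded gradient---does not obviously work either, because Corollary~\ref{cor:convolution} controls $\rho_\phi$ of a mollification of a \emph{function}, not of $|\nabla^a u|$ when $Du$ has a singular part, and Theorem~\ref{thm:exactFormula} only gives the value of $\rho_{V,\phi}(u)$, not an approximation by smooth functions with globally bounded gradient. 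This is precisely why the paper outsources the $\Gamma$-convergence step to \cite{EleHH_pp} rather than arguing directly.
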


\begin{proof}
To establish the necessary conditions we use some results from references without defining here all 
the terms. The references can be consulted if necessary. By \cite[Corollary~4.6]{Juu_pp}, 
$C^\infty(\overline{\Omega})$ is dense in $W^{1, \phi}(\Omega)$ if $\Omega$ is an 
$(\epsilon, \delta)$-domain and $\phi$ satisfies \azero{}, \aone{} and (A2). 
We note that (A2) holds since $\Omega$ is bounded 
\cite[Lemma~4.2.3]{HarH19} and $\Omega$ is an $(\epsilon, \delta)$-domain since it is convex.

Since $\phi$ satisfies \adec{2}, $L^2(\Omega) \subset L^\phi(\Omega)$ and thus
$L^{1, \phi}(\Omega) \cap L^2(\Omega) \hookrightarrow W^{1, \phi}(\Omega)$. Since the dimension is $2$ we also have  
$W^{1, \phi}(\Omega) \hookrightarrow W^{1, 1} (\Omega) \hookrightarrow L^2(\Omega)$. 
Thus $C^\infty(\overline{\Omega})$ is dense in $L^{1, \phi}(\Omega) \cap L^2(\Omega)$ with respect to 
the norm $u \mapsto \|u\|_2 + \|\nabla u\|_\phi$. 
By density, 
\cite[Theorem~1.3(2)]{EleHH_pp} yields that $F_p$ $\Gamma$-converges to $F$ in $L^2(\Omega)$.
Since $\phi$ satisfies \VA{}, it belongs to $C(\Omega\times [0,\infty))$. 
Thus Proposition~\ref{comparison-modular} gives $F= \rho_{V,\phi}^f$ in $L^2(\Omega) = L^2(\Omega)\cap L^\phi(\Omega)$.
\end{proof}



\begin{thebibliography}{99}

%
%
%
%

\bibitem{AmbFP00}
L.\ Ambrosio, N.\ Fusco and D.\ Pallara: 
\emph{Functions of Bounded Variations and Free Discontinuity Problems},
Oxford University Press, Oxford, 2000.

%

\bibitem{AmeGZ14}
M.E. Amendola, G. Gargiulo and E. Zappale:
Dimension reduction for $-\triangle_1$,
\textit{ESAIM: COCV}~\textbf{20} (2014), 42--77.

\bibitem{AubK06}
G.\ Aubert and P.\ Kornprobst: 
\emph{Mathematical Problems in Image Processing, Partial Differential
Equations and the Calculus of Variations}, Second edition, 
Applied Mathematical Sciences, vol. 147, Springer, New York, 2006.

\bibitem{BaaBL22}
S.\ Baasandorj, S.-S. Byun and H.-S. Lee:
Gradient estimates for Orlicz double phase problems with variable exponents,
\textit{Nonlinear Anal.}~\textbf{221} (2022), article 112891.

\bibitem{Bra02}
A. Braides: 
\emph{$\Gamma$-convergence for Beginners}, 
Oxford Lecture Series in Mathematics and its Applications, vol. 22, 
Oxford University Press, Oxford, 2002.

%
%
\bibitem{BarCM18}
P.\ Baroni, M.\ Colombo and G.\ Mingione: 
{Regularity for general functionals with double phase}, 
\textit{Calc. Var. Partial Differential Equations}~\textbf{57} (2018), article 62.

%

\bibitem{BorC22}
M. Borowski and I. Chlebicka:
Modular density of smooth functions in inhomogeneous and fully anisotropic Musielak--Orlicz--Sobolev 
spaces, 
\textit{J. Funct. Anal.}~\textbf{283} (2022), no.~12, article 109716. 

%
%
%

\bibitem{ChaL97}
A.\ Chambolle and P.-L.\ Lions: 
Image recovery via total variation minimization and related problems,
\textit{Numer.\ Math.}~\textbf{76} (1997), 167--188.

\bibitem{CheLR06}
Y.\ Chen, S.\ Levine and M.\ Rao:
Variable exponent, linear growth functionals in image restoration,
\textit{SIAM J.\ Appl.\ Math.}~\textbf{66} (2006), no.~4, 1383--1406.


\bibitem{ChlGSW21}
I.\ Chlebicka, P.\ Gwiazda, A.\ \'{S}wierczewska-Gwiazda and A.\ Wr\'{o}blewska-Kami\'{n}ska:
\emph{Partial Differential Equations in Anisotropic Musielak-Orlicz Spaces}, 
Springer, Cham, 2021.
%
%
%
\bibitem{ColM15a}
M.\ Colombo and G.\ Mingione: 
Regularity for double phase variational problems,
\textit{Arch. Ration. Mech. Anal.}~\textbf{215} (2015), no.~2, 443--496.
%
%
%
%
%
%
%
%

\bibitem{Dal93} 
G.\ Dal Maso:
\emph{An Introduction to $\Gamma$-convergence},
Progress in Nonlinear Differential Equations and their Applications, vol. 8, Birkh\"auser, Boston, 1993.

\bibitem{DeGF75}
E.\ De Giorgi and T.\ Franzoni:
Su un tipo di convergenza variazionale, 
\emph{Atti Accad. Naz. Lincei, VIII Ser., Rend., Cl. Sci. Fis. Mat. Nat.}~\textbf{58} (1975), 842--850. 

\bibitem{DeF22}
C.\ De Filippis:
Optimal gradient estimates for multi-phase integrals,
\textit{Math. Eng.}~\textbf{4} (2022), no.~5, 1--36.

\bibitem{DieHHR11} 
L.\ Diening, P.\ Harjulehto, P.\ H\"ast\"o and M.\ R\r u\v zi\v cka:
\emph{Lebesgue and Sobolev Spaces with Variable Exponents}, 
Lecture Notes in Mathematics, vol.~2017, Springer, Heidelberg, 2011.

%
%
%

\bibitem{EleHH_pp}
M.\ Eleuteri, P.\ Harjulehto and P.\ H\"ast\"o: 
Minimizers of abstract generalized Orlicz--bounded variation energy, 
Preprint (2021). arXiv:2112.06622

%
%
%
\bibitem{EvaG92}
L.C.\ Evans and R.F.\ Gariepy:
\emph{Measure Theory and Fine Properties of Functions},
CRC Press, Boca Raton, 1992.

\bibitem{FarFW22}
C.\ Farkas, A.\ Fiscella and P.\ Winkert:
On a class of critical double phase problems
\textit{J.\ Math. Anal.\ Appl.}~\textbf{515} (2022), no.~1, article 126420.

%
%
%
%
%
%


\bibitem{Giusti}
E. Giusti:
\emph{Minimal Surfaces and Functions of Bounded Variation},
Birkh\"auser, Boston, 1984.

\bibitem{HadSSV_pp}
O.V. Hadzhy, M.O. Savchenko, I.I. Skrypnik, M.V. Voitovych:
On asymptotic behavior of solutions to non-uniformly elliptic equations with generalized Orlicz growth, 
Preprint (2022). arXiv:2208.05671

\bibitem {HarH19} 
P.\ Harjulehto and P.\ Hästö: 
\emph{Orlicz Spaces and Generalized Orlicz Spaces}, 
Lecture Notes in Mathematics, vol. 2236, Springer, Cham, 2019.


\bibitem{HarH21}
P. Harjulehto and P. Hästö: 
Double phase image restoration, 
\textit{J. Math. Anal. Appl.} \textbf{501} (2021), no.~1, article 123832.

\bibitem{HarHJ23}
P.\ Harjulehto, P.\ H\"ast\"o and J.\ Juusti:
Bloch estimates in non-doubling generalized Orlicz spaces, 
\textit{Math. Eng.} \textbf{5} (2023), no.~3, 1--21. 

\bibitem{HarHJ_pp}
P.\ Harjulehto, P.\ H\"ast\"o and J.\ Juusti:
Revisiting basic assumptions of generalized Orlicz spaces,
in preparation. 

%
\bibitem{HarHK16}
P.\ Harjulehto, P.\ H\"ast\"o and R.\ Kl\'en: 
Generalized Orlicz spaces and related PDE, 
\textit{Nonlinear Anal.}~\textbf{143} (2016), 155--173.

\bibitem{HarHL08}
P. Harjulehto, P. Hästö and V. Latvala: 
Minimizers of the variable exponent, non-uniformly convex Dirichlet energy, 
\textit{J. Math. Pures Appl. (9)}~\textbf{89} (2008), no. 2, 174--197.

\bibitem{HarHLT13}
P.\ Harjulehto, P.\ H\"ast\"o, V.\ Latvala and O.\ Toivanen:
Critical variable exponent functionals in image restoration, 
\textit{Appl. Math. Letters}~\textbf{26} (2013), 56--60.
%



\bibitem{Has15}
P.A.\ H\"ast\"o: 
The maximal operator on generalized Orlicz spaces, 
\textit{J. Funct. Anal.}~\textbf{269} (2015), no. 12, 4038--4048.
%

\bibitem{Has_pp}
P.A.\ H\"ast\"o: 
A fundamental condition for harmonic analysis in anisotropic generalized Orlicz spaces,
\textit{J. Geom. Anal.}~\textbf{33} (2023), article 7.

\bibitem{HasJR_pp}
P.\ H\"ast\"o, J.\ Juusti and H.\ Rafeiro:
Riesz spaces with generalized Orlicz growth,
Preprint (2022). arXiv:2204.14128

\bibitem{HasO22a}
P.\ H\"ast\"o and J.\ Ok: 
Maximal regularity for non-autonomous differential equations, 
\textit{J. Eur. Math. Soc. (JEMS)}~\textbf{24} (2022), no.~4, 1285--1334.

\bibitem{HasO22b}
P.\ H\"ast\"o and J.\ Ok: 
Regularity theory for non-autonomous partial differential equations without Uhlenbeck structure,
\textit{Arch.\ Ration.\ Mech.\ Anal.}~\textbf{245} (2022), no. 3, 1401--1436.





\bibitem{HurOS_pp}
R.\ Hurri-Syrj\"anen, T.\ Ohno and T.\ Shimomura:
Trudinger's inequality for Riesz potentials on Musielak-Orlicz spaces over metric measure spaces, 
\textit{Complex Var.\ Elliptic Eq.}, to appear.

\bibitem{Jal16}
K.\ Jalalzai: 
Some remarks on the staircasing phenomenon in total variation-based image denoising, 
\textit{J.\ Math.\ Imaging Vis.}~\textbf{54} (2016), 256--268.



\bibitem{Juu_pp}
J. Juusti: 
Extension in generalized Orlicz--Sobolev spaces, 
Preprint (2022). arxiv:2206.15121  

%
%
%

\bibitem{LiLP10} 
F.\ Li, Z.\ Li and L.\ Pi: 
Variable exponent functionals in image restoration, 
\textit{Appl. Math. Comput.}~\textbf{216} (2010), no.~3, 870--882.

\bibitem{LiuP22}
Z.H.\ Liu and N.S.\ Papageorgiou:
Double phase Dirichlet problems with unilateral constraints
\textit{J. Differential Equations}~\textbf{316} (2022), no.~5, 249--269.

%
%
%
%
%
%
\bibitem{MizS21}
Y. Mizuta and T. Shimomura: 
Hardy-Sobolev inequalities in the unit ball for double phase functionals, 
\textit{J. Math. Anal. Appl.}~\textbf{501} (2021), no.~1, article 124133.

\bibitem{Mus83}
J. Musielak: 
\emph{Orlicz Spaces and Modular Spaces},
Lecture Notes in Mathematics, vol.~1034, Springer, Berlin, 1983.
%
%
%
%
%
%
%
%

\bibitem{PagPRV_pp}
V. Pagliari, K. Papafitsoros, B. Rai\textcommabelow{t}\u{a} and A. Vikelis:
Bilevel training schemes in imaging for total-variation-type functionals with convex integrands, 
\textit{SIAM J.\ Imaging Sci.}~\textbf{15} (2022), no.~4, 1690--1728.

%
%
%
%

\bibitem{RudOF92}
L. Rudin, S. Osher and E. Fatemi: 
Nonlinear total variation based noise removal algorithms, 
\emph{Physica D}~\textbf{60} (1992), 259--268.

%
%
%

\bibitem{WanZ22}
S.\ Wang and J.\ Zhou:
Another proof of the boundedness of Calder\'on--Zygmund singular integrals on generalized Orlicz spaces,
\textit{Bull. Sci. Math.}~\textbf{179} (2022), article 103176.

\bibitem{WeiXY22}
F.\ Weisz, G.\ Xie and D. Yang:
Dual spaces for martingale Musielak-Orlicz Lorentz Hardy spaces,
\textit{Bull. Sci. Math.}~\textbf{179} (2022), article 103154.

%
%
%
%
%
%
%


\end{thebibliography}
\end{document}